\providecommand{\keywords}[1]{\textbf{\textit{Keywords:}} #1}
\providecommand{\msc}[1]{{\textit{2000 Mathematics Subject Classification:}} #1}
\definecolor{NoteColor}{rgb}{1,0,0}
\newtheorem{theorem}{Theorem}[section]
\newtheorem{proposition}[theorem]{Proposition}
\newtheorem{lemma}[theorem]{Lemma}
\newtheorem{corollary}[theorem]{Corollary}
\theoremstyle{definition}
\newtheorem{definition}[theorem]{Definition}
\newcommand{\R}{\mathbb R}
\let\reals\R
\newcommand{\naturals}{\mathbb N}
\renewcommand{\v}{\mathbf{v}}
\newcommand{\len}{\mathrm{length}}
\newcommand{\ie}{i.e.\ }
\theoremstyle{remark}
\newtheorem{remark}[theorem]{\rm\bf Remark}
\newtheorem{example}[theorem]{\rm\bf Example}
\begin{document}

\title{On families of Finsler metrics
}
\author{\.{I}smail Sa\u{g}lam, Ken'ichi Ohshika  and Athanase Papadopoulos}

\date{}	
	\maketitle

	\begin{abstract}
		In this paper, we answer some natural questions on symmetrisation and more general combinations of Finsler metrics, with a view towards applications to Funk and Hilbert geometries and to metrics on  Teichmüller spaces. For a general non-symmetric Finsler metric on a smooth manifold, we introduce two different families of metrics, containing as special cases the arithmetic and the max symmetrisations respectively of the distance functions associated with these Finsler metrics. We are interested in various natural questions concerning metrics in such a family, regarding its  geodesics, its completeness, conditions under which such a metric is Finsler, the shape of its unit ball in the case where it is Finsler, etc. We address such questions in particular  in the setting of Funk and Hilbert geometries, and in that of the Teichmüller spaces of several kinds of surfaces, equipped with  Thurston-like asymmetric metrics.

			\end{abstract}
\noindent 	\keywords{} Symmetrisation of a Finsler metric, Hilbert metric, weighted Funk metric, families of Finsler metrics, complete non-symmetric metric, Teichm\"uller space,  Teichm\"uller spaces of Euclidean surfaces, Thurston metric.
	 	
\noindent 	\msc{} primary 53C70; secondary 51K05; 51K10;  53B40; 53C60

	\tableofcontents

\section{Introduction}

In this paper, we study some natural questions concerning symmetrisations and other combinations of metrics on a given space, in particular of Finsler metrics. Finsler geometry is considered here from a synthetic point of view, with a minimum of differentiability properties and without use of tensor calculus. The motivation for writing this paper comes from the activity  taking place since a few years in geometry and  topology, in particular in the study of the Funk and Hilbert metrics (see the papers 	
\cite{AFV, PTF, PY2, RV}), of Thurston's asymmetric Finsler metric on Teichm\"uller space together with its adaptations to various settings \cite{Thurston, HOP,  MOP, SP1, SP2, SOP},  and of the recent work on the Finsler geometry of the earthquake metric defined on the same space \cite{HOPP}.  The family of Randers metrics connecting the Teichm\"uller metric on the Teichm\"uller space of the torus to the Thurston-like asymmetric metric of that space  studied in \cite{MOP-2022} and its generalisation to higher genera in \cite{MOP-2024} also acted as a motivation for studying families of general Finsler metrics. In writing this paper, we felt the need to clear up in a general setting some results on the metric geometry of Finsler manifolds.  We adopt a point of view of global Finsler geometry, following in the footsteps of Herbert Busemann, who wrote in
 \cite{Busemann1950-1}: ``The term `Finsler space' evokes in most mathematicians the picture
of an impenetrable forest whose entire vegetation consists of
tensors. The purpose of the present lecture is to show that the association
of tensors (or differential forms) with Finsler spaces is due to an
historical accident, and that, at least at the present time, the fruitful
and relevant problems lie in a different direction.'' 

Thus, in the following, a Finsler manifold $M$ is defined to be a differentiable manifold equipped with a weak norm on each tangent space such that these norms vary continuously with respect to the basepoints in $M$.
Here and in the following,  the epithet ``weak" means that the structure under consideration does not necessarily satisfy the symmetry axiom. It will be understood that the metrics and norms we consider are weak, without always stating this explicitly.
 In a Finsler manifold, the length of a piecewise $C^1$ path is defined as the integral of the norms of the tangent vectors along this path. With this, a Finsler manifold is equipped with a natural length structure and a metric in which the distance between two points is equal to the infimum of the lengths of all  piecewise $C^1$ paths joining them.
   In this setting,  we shall address natural questions concerning the symmetrisation and more general combinations of Finsler metrics. We obtain results on the geodesics of these metrics, their Finsler structures, and the associated families of unit balls and Minkowski functionals.  
 
The outline of the rest of this paper is as follows.

In \cref{s:Examples}, we give four examples which prepare the reader for the kind of questions we address in the rest of the paper. The examples are concerned with the comparison between metrics obtained from two given Finsler metrics by taking the maximum of the distance functions, or the max of the norms, or the sum of the distance functions, or the sum of the norms. We give examples and counter-examples which will show the kind of answers we expect. This will prepare for the general theorems which we obtain later in the paper.

 In \cref{s:Weak}, we recall the notions of weak metric, weak length space, geodesic space, non-symmetric metric satisfying an axiom of Busemann formulated in his book \cite{Busemann-GG},  and other metric notions, adapted to our non-symmetric setting. 
  
In \cref{s:Weak-Finsler},  
we collect several results on Finsler structures, some of which exist in the literature but under stronger regularity conditions.
We prove that the metrics induced by our Finsler structures satisfy Busemann's axioms introduced in \cref{s:Weak-Finsler}.
We discuss notions like Cauchy sequence, metric completion and uniform convergence  for our setting of asymmetric metric spaces and we give an Arezel\`{a}--Ascoli-type theorem as well as a Hopf--Rinow theorem adapted to this setting, together with applications to the study of two natural one-dimensional families of Finsler metrics associated with a given Finsler metric $F$, namely,

\[
F_t^a(x,v)=(1-t)F(x,v)+tF(x,-v),
\] and
\[
F_t^m(x,v) = \max\{((1-t) F(x,v),t F(x,-v)\}, both
\]
 defined for $t\in [0,1]$

In \cref{s:Families}, 
given a Finsler structure $F$ on a manifold $M$ with its induced distance function $d(F)$, we give conditions under which an element of the families of  distance functions 
\[
d(F)_t^a(x,y)=(1-t) d(F)(x,y)+t d(F)(y,x)
\]
and
\[
d(F)_t^m(x,y)=\max\{(1-t)d(F)(x,y), td(F)(y,x)\}
\] 
 $(t\in [0,1])$, is Finsler, and in the case it is, a formula for the Lagrangian of this Finsler structure in terms of that of $F$.  
  Furthermore, for any two Finsler structures, we  give a necessary and sufficient condition for a weighted Finsler structure obtained from them by the formulae we gave in \cref{s:Weak-Finsler} to induce the same distance function as the weighted distance function with the same weight.
   The questions raised by the examples given in  \cref{s:Examples} are answered in a general setting. Some of the results are a broad generalisation of results that are known for Hilbert and Funk metrics.

In \cref{s:Weighted-Funk}, we introduce two families of metrics, the arithmetic and max weighted Funk metrics. The Euclidean straight line segments are geodesics for the arithmetic weighted Funk metrics. Thus, these metrics satisfy the conditions of Hilbert's Fourth Problem. 

 \cref{s:Teich} is a summary of results and questions related to Finsler metrics on Teichm\"uller spaces.

\section[Four examples]{Four examples: Maximum of norms versus maximum of distances and sum of norms versus sum of distances}\label{s:Examples}
In this section, we give four illuminating examples which prepare for the kind of questions we address in the rest of this paper.
All manifolds are assumed to be differentiable. The metrics  involved in these examples are symmetric, unlike many of the metrics that we consider in the rest of this paper.

To simplify notation, in the following, given a Finsler structure on a manifold, we shall usually denote by  $\Vert\cdot \Vert$  the family of associated norms on the tangent bundle and also the individual norms on each tangent space.

Given two Finsler metrics $d_1$ and $d_2$ on a manifold $M$ induced by two families of norms $\Vert\cdot \Vert_1$ and  $\Vert \cdot \Vert_2$ on the tangent bundle of $M$, we consider two ways of defining new metrics by taking maxima: 
one way is to take the norm $\max\{\Vert\cdot\Vert_1, \Vert\cdot \Vert_2\}$ at each point, and then take the Finsler metric and the distance function defined by this new norm, and the other way is to take the maximum $\max\{d_1, d_2\}$ of the distance functions. 
The reader will notice that it is not clear a priori whether  the latter metric is induced by a norm, that is, whether  the manifold equipped with this distance function is Finsler. 

We start with an example which shows that the two metrics obtained by taking the max of norms and distances respectively do not coincide in general, even if the original metrics are Riemannian  (\cref{ex:1} below).
In the second example (\cref{ex:2}), we show, by starting with the same original metrics, that  in this example, the metrics  obtained by taking respectively the sum of the of norms and distances do not coincide.
 In the next example (\cref{ex:3}), the two metrics, obtained by taking the max of the distance functions and of the norms, coincide.
Later in the paper, we shall study the general case, and in particular we shall address the question of whether the metric $\max\{d_1, d_2\}$ is Finsler.
  \cref{ex:4} is concerned with the comparison between the metrics induced by the sum of the norms  $\lvert\lvert \ \rvert\rvert_1+ \lvert\lvert \ \rvert\rvert_2$ and the one defined by the sum of the distance functions $d_1+d_2$.  In the example we give, the two metrics coincide.

\begin{example}\label{ex:1}[Unequal metrics obtained as max of distances and max of norms.] Let $U =\{(x,y)\in \reals^2\mid y>0\}$ be the upper half-plane.
We consider the following two norms defined on $U$: the Euclidean norm $\displaystyle\Vert \cdot \Vert_e=\sqrt{dx^2+dy^2}$ and the hyperbolic norm $\displaystyle\Vert\cdot \Vert_h=\frac{\sqrt{dx^2+dy^2}}{y}$.
We denote the corresponding Riemannian metrics on $U$ by $d_e$ and $d_h$ respectively.

Let $\Vert \cdot \Vert_m$ be the norm defined on each tangent plane of $U$ by $\Vert \v \Vert_m=\max\{\Vert \v \Vert_e, \Vert \v \Vert_h\}$.
From the definition, it follows that
 for any tangent vector $\v$ at $(x,y)$ with $y \geq 1$, we have $\Vert\v \Vert_m=\Vert \v \Vert_e$ whereas for $y <1$, we have $\Vert \v \Vert_m=\Vert \v \Vert_h$.
Let $d_m$ be the Finsler metric on $U$ induced by the norm $\Vert \cdot \Vert_m$.

Consider two points of the form $(0, y_1), (0,y_2) \in U$ with $y_1 < 1 < y_2$.
Then, we have $d_e((0,y_1), (0,y_2))=y_2-y_1$ and $d_h((0,y_1), (0,y_2))=\log(y_2/y_1)$.
We shall prove that $d_m((0,y_1), (0,y_2))=(y_2-1)-\log(y_1)$. We can easily find some specific pair of points $y_1$ and $y_2$ so that the values taken by the metrics $d_m$ and $\max\{d_e, d_h\}$ on this pair are different.

The inequality $d_m((0,y_1), (0,y_2)) \leq (y_2-1)-\log(y_1)$ is immediate since the length of the vertical segment joining the two points with respect to $\Vert \cdot \Vert_m$ is equal to the expression given in the right hand side.

\sloppy
Let us prove the reverse inequality.
Let $\alpha \colon [a,b] \to U$ be an arc connecting $(0,y_1)$ to $(0,y_2)$, and let us denote its coordinates by $\alpha(t)=(x(t),y(t))$.
Then there exists  $t_0 \in [a,b]$ such that $y(t_0)=1$, and
 $\len_{d_m}(\alpha[a, t_0]) \geq \len_{d_h}(\alpha[a,t_0])\geq - \log(y_1)$.
On the other hand, $\len_{d_m}(\alpha[t_0, b]) \geq \len_{d_e}(\alpha[t_0,b]) \geq y_2-1$.
Thus we have $\len_{d_m}(\alpha) \geq (y_2-1)-\log y_1$. Taking the infimum over all arcs joining $(0,y_1)$ and $(0,y_2)$, we have the required inequality, $d_m((0,y_1), (0,y_2)) \geq (y_2-1)-\log y_1$. Thus, $d_m((0,y_1), (0,y_2))=(y_2-1)-\log(y_1)$. 
\end{example}

\begin{example}\label{ex:3}[Equal metrics obtained by taking the max of the distances and of the norms.]  
Consider the following two Riemannian metrics on $\R^2$: 
$$ds^2=dx^2+dy^2 \ \text{and}\ ds'^2=a dx^2+b dy^2,$$

\noindent where $a$ and $b$ are positive real numbers. These metrics are induced by the following norms respectively: $\lvert\lvert \ \lvert\lvert_e=\sqrt{dx^2+dy^2} $
and $\lvert\lvert \ \lvert\lvert_{e'}=\sqrt{adx^2+b dy^2}$. 

It is easy to see that $(\R^2, ds)$ and $(\R^2,ds')$ are isometric as Riemannian manifolds. Indeed, an isometry is given by $$(x,y)\mapsto (\frac{1}{\sqrt{a}}x,\frac{1} {\sqrt{b}}y).$$
Since this isometry  is linear, the geodesics of the metric 
$(\R^2, ds')$ are, like those of  $(\R^2, ds)$, straight lines.

Let $d_e$ and $d_{e'}$ be the length metrics associated with $\lvert \lvert \cdot \rvert \rvert_e$ and $\lvert\lvert \cdot \rvert\rvert_{e'}$ respectively, that is, the metrics where the distance between two points with respect to $d_e$ (respectively $d_{e'}$) is the infimum of the lengths of the piecewise $C^1$-paths joining these two points with respect to the norm $\Vert \cdot \Vert_e$ (respectively $\Vert \cdot \Vert_{e'}$). 
Then we have $$d_e((x,y),(x',y'))=\sqrt{(x'-x)^2+(y'-y)^2}$$ and 
$$d_{e'}((x,y),(x',y'))=\sqrt{a(x'-x)^2+b(y'-y)^2}.$$

Consider the norm $\max\{\Vert\cdot \Vert_e,\Vert\cdot \Vert_{e'}\}$, and let $\mu$
be the induced Finsler metric on $\R^2$. 
Consider also the metric $m=\max\{d_e,d_{e'}\}$. 
We shall prove that $\mu=m$. 

It is easy to see, as follows, that $\mu \geq m$.
For any two points $A$ and $B$ in $\R^2$ and for any $\epsilon>0$, there exists a piecewise $C^1$-path $\alpha:[0,1]\to \R^2$, with $\alpha(0)=A$ and $\alpha(1)=B$, such that  
$$\mu(A,B)\geq \int_0^1\max\{\lvert\lvert\dot{\alpha}(t)\rvert\rvert_e,\lvert\lvert\dot{\alpha}(t)\rvert\rvert_{e'}\}\ dt-\epsilon \geq \int_0^1\lvert\lvert\dot{\alpha}(t)\rvert\rvert_e\ dt -\epsilon\geq d_e(A,B)-\epsilon.$$
It follows that $\mu(A,B)\geq d_e(A,B)$. By the same reasoning, $\mu(A,B)\geq d_{e'}(A,B)$. 
Thus we have the inequality $\mu\geq m.$

Now we prove the reverse inequality. Let $\alpha: [0,1]\to \R^{2}$ be the unique path such that $\dot{\alpha}$ is a constant vector, $\alpha(0)=A$ and $\alpha(1)=B$. 
Since $\dot{\alpha}$ is a constant vector, one of the inequalities $\Vert \dot{\alpha} \Vert_e \geq \Vert \dot{\alpha}\Vert_{e'}$ or $\Vert\dot{\alpha}\Vert_{e} < \Vert \dot{\alpha}\Vert_{e'}$ holds all over $[0,1]$.
Assume that the first alternative holds, \ie $\max\{\lvert\lvert \dot{\alpha}\rvert\rvert_{e} , \lvert\lvert \dot{\alpha}\rvert\rvert_{e'}\}=\lvert\lvert \dot{\alpha}\rvert\rvert_{e} $. Then,
 
$$d_e(A,B)=\int_0^1\lvert\lvert\dot{\alpha}(t)\rvert\rvert_e \ dt\geq \int_0^1\lvert\lvert\dot{\alpha}(t)\rvert\rvert_{e'}\ dt=d_{e'}(A,B).$$
 Thus $m(A,B)=d_e(A,B)$. Therefore we have
\begin{equation*}
\begin{split}m(A,B)&=d_e(A,B)=\int_0^1\lvert\lvert\dot{\alpha}(t)\rvert\rvert_e \ dt\\&=\int_0^1\max\{\lvert\lvert \dot{\alpha}(t)\rvert\rvert_e,\lvert\lvert \dot{\alpha}(t)\rvert\rvert_{e'} \} \ dt \geq \mu(A,B).
\end{split}
\end{equation*}

The same kind of argument works in   the case when the second alternative holds.
We have then  $\max\{\lvert\lvert \dot{\alpha}\rvert\rvert_{e} , \lvert\lvert \dot{\alpha}\rvert\rvert_{e'}\}=\lvert\lvert \dot{\alpha}\rvert\rvert_{e'}$, which implies $d_{e'}(A,B) \geq d_e(A,B)$ and hence $d_m(A,B)=d_{e'}(A,B)$.
Now by replacing $d_e$ and $\Vert\cdot\Vert_e$ in the above by $d_{e'}$ and $\Vert\cdot\Vert_{e'}$,   we  have $m \geq \mu$.
Thus we conclude that $m=\mu$

Note that the above argument also shows that straight lines are geodesics for the Finsler metric $m$.
\end{example}

 \cref{ex:3} is a special case of a more general result which we shall prove later, \cref{th:max}.

\begin{example}\label{ex:2}[Unequal metrics obtained as sums of distance functions and of norms.]
 Now we turn to the metric obtained by taking the sum of two norms, $\Vert \cdot \Vert_s:=\Vert\cdot \Vert_1+\Vert\cdot \Vert_2$. 
We start with the two metrics $d_e$ and $d_h$ defined in  \cref{ex:1} on the upper half-plane $U$.
Let $d_s$ denote the Finsler metric associated with the norm $\Vert \cdot \Vert_s$, and $d_\sigma$ the distance function defined as the sum $d_e+d_h$.
Take two points $a_1=(x_1, y_1)$ and $a_2=(x_2, y_2)$ in $U$ with $x_1 \neq x_2$.

The geodesic connecting $a_1$ to $a_2$ with respect to $d_e$ is a straight line segment, which we denote by $\gamma_1$.
Let $\gamma_2$ denote the geodesic connecting the same pair of points with respect to $d_h$.
Note that $\gamma_1$ and $\gamma_2$ meet only at their endpoints.
Now, by definition, $d_\sigma(a_1, a_2)=\len_{d_e}(\gamma_1)+\len_{d_h}(\gamma_2)$.
On the other hand, for any rectifiable arc $\alpha$ connecting $a_1$ to $a_2$, we have $\len_{d_s}(\alpha)=\len_{d_e}(\alpha)+\len_{d_h}(\alpha)$.

We have $\len_{d_e}(\alpha)\geq \len_{d_e}(\gamma_1)$ with equality holding only when $\alpha=\gamma_1$, and we have $\len_{d_h}(\alpha)\geq \len_{d_h}(\gamma_2)$ with equality holding only when $\alpha=\gamma_2$.
Since $\gamma_1 \neq \gamma_2$ in our setting,  we see that $\len_{d_s}(\alpha)>\len_{d_e}(\gamma_1)+\len_{d_h}(\gamma_2)=d_\sigma(a_1, a_2)$.
By the compactness of the space of rectifiable paths connecting $a_1$ and $a_2$ with lengths bounded from above (use the Arzel\`{a}--Ascoli theorem), this implies that $d_s(a_1, a_2) > d_\sigma(a_1, a_2)$.
\end{example}

 \begin{example} \label{ex:4}[Equal metrics obtained by taking the sum of the distances or of the norms.]
 In this example, we start with two Finsler (in fact, Riemannian) metrics, and show that in this particular case the metric defined by taking the sum of the two metrics coincides with the one obtained by taking the sum of the two norms.
 
We start with the same Riemannian metrics on $\R^2$ as in \cref{ex:3}.
 These metrics are induced by the norms $\lvert\lvert \cdot \lvert\lvert_e=\sqrt{dx^2+dy^2}$
and $\Vert \cdot \Vert_{e'}=\sqrt{a dx^2+b dy^2}$ respectively. 
As before, we denote the associated length metrics by $d_e$ and $d_{e'}$. Consider the norm $\Vert \cdot \Vert_e+ \Vert \cdot \Vert_{e'}$ on $\R^2$ and let $\eta$ be the induced Finsler metric.
We define a metric $\delta$ by  $\delta=d_{e}+d_{e'}$. We shall prove that $\eta=\delta$.

First we observe that $\eta\geq \delta$. Indeed, since $\eta$ is Finsler, for any two points  $A$ and $B$  in $\R^2$ and for all $\epsilon>0$, there exists a  piecewise $C^1$-curve $\alpha: [0,1]\to \R^2$ from $A$ to $B$ such that
\begin{align*}
\eta(A,B)&\geq \int_0^1\{\lvert\lvert\dot{\alpha}(t)\rvert\rvert_e+\lvert\lvert\dot{\alpha}(t)\rvert\rvert_{e'}\}\ dt-\epsilon
\\ &= \int_0^1\lvert\lvert\dot{\alpha}(t)\rvert\rvert_e\ dt+\int_0^1\lvert\lvert\dot{\alpha}(t)\rvert\rvert_{e'}\ dt -\epsilon\geq d_e(A,B)+d_{e'}(A,B)-\epsilon\\
&=\delta(A,B)-\epsilon.
\end{align*}
 It follows that $\eta(A,B)\geq \delta(A,B)$.  
 
 Now we prove the reverse inequality. Let $\alpha:[0,1]\to \R^2$ be the unique path such that $\dot{\alpha}$ is a constant vector, $\alpha(0)=A$ and $\alpha(1)=B$. Then we have
\begin{align*}
\delta(A,B)&=d_{e}(A,B)+d_{e'}(A,B)=\int_0^1\lvert\lvert\dot{\alpha}(t)\rvert\rvert_e\ dt+\int_0^1\lvert\lvert\dot{\alpha}(t)\rvert\rvert_{e'}\ dt \\
&=\int_0^1\{\lvert\lvert\dot{\alpha}(t)\rvert\rvert_e+\lvert\lvert\dot{\alpha}(t)\rvert\rvert_{e'}\}\ dt\geq \eta(A,B).
\end{align*}

Thus we conclude that $\eta=\delta$. 
The argument above also implies that straight lines are geodesics for the metric $\delta$.

\end{example}

 \cref{ex:4} illustrates the meaning of \cref{th:sum} below. 

\section{Weak metrics and weak length spaces}\label{s:Weak}
We adopt the point of view on metric spaces developed in \cite{PT22}, and we first recall the following notion.

\begin{definition}[Weak metric]
A weak metric on a set $X$ is a function $\eta: X \times X\to [0,\infty]$ such that 

\begin{enumerate}
\item
$\eta(x,x)=0$ for all $x \in X$.
\item
$\eta(x,z)\leq \eta(x,y)+\eta(y,z)$ for all $x,y,z \in X$.
\end{enumerate}
\end{definition}

Thus, a weak metric does not necessarily satisfy the symmetry axiom.  It is interesting to note that Felix Hausdorff, one of the main founders of the theory of metric spaces, already introduced a metric that does not satisfy the symmetry axiom in his famous book \emph{Mengenlehre} (Set Theory) \cite{Hausdorff2}, first published in 1927, which is a classical treatise on topology and metric spaces. Indeed,  Hausdorff defined there  the distance  between bounded subsets of a metric space that now bears his name, the \emph{Hausdorff distance}, but before introducing this metric, he considered a non-symmetric version of it (see p. 167 of the English translation of \cite{Hausdorff2}). We also note that this non-symmetric Hausdorff distance  was used by Thurston in his paper \cite{Thurston}, and it was studied in an essential way in the paper \cite{OP1}.
The Funk metric, introduced by Funk in \cite{Funk} and studied by Busemann in his book \cite{Busemann-GG},   whose theory was later  developed in several directions including generalisations to convex sets in non-Euclidean spaces of constant curvature and in a Lorentzian setting  (see e.g. \cite{PY, PY2, PY3}), is another classical example of weak (non-symmetric) metric.
The study of non-symmetric metrics has recently been the object of growing interest for geometers, in particular since the appearance of Thurston's metric on Teichm\"{u}ller space, see \cite{Thurston} and the recent paper \cite{HOP}, and more recently, of  the earthquake metric, also introduced by Thurston in \cite{Thurston} and developed in \cite{HOPP}.  Some works by Busemann on completion and other properties of non-symmetric metric spaces have recently been  made more precise  and widely extended for their use in specific cases of non-symmetric metrics: see for example the  paper \cite{HOPP} and especially the appendix there, and the paper \cite{SOP}.

In order to spare words, and if the context is clear, we shall sometimes write \emph{metric} instead of \emph{weak metric}.

Since our metrics are not necessarily symmetric, there are a priori two possibilities of defining the convergence of a sequence $(x_n)$ to a point $x$ in the space:   $\eta(x_n,x)\to 0$ or  $\eta(x,x_n)\to 0$  as $n\to \infty$. Busemann, in \cite{Busemann-Synthetic}, introduced the following axiom which guarantees that these two possibilities are equivalent.

\begin{definition}[Busemann's axiom]
\label{Busemann}
	Let $(X,\eta)$ be a weak metric space.
	 We say that $(X,\eta)$ satisfies Busemann's axiom if the following holds: For any sequence $(x_n)$ and for any $x$ in X, we have $\eta(x_n,x)\to 0$ as $n\to \infty$
	if and only if $\eta(x,x_n)\to 0$ as $n\to \infty$. 
\end{definition}

In a space satisfying Busemann's axiom, the topologies obtained by taking as a sub-basis for open sets the left or the right open balls or their union are the same. These topologies coincide with the one generated by the \lq\lq arithmetically symmetrised'' weak metric $\eta_{arith}(x,y)=\frac{1}{2}[\eta(x,y)+\eta(y,x)]$.

In what follows, we shall assume that the spaces we consider satisfy Busemann's axiom. In this case, they are equipped with a natural  topology.

Given a weak metric $\eta$ on a metric space $X$, we are interested in the following families of metrics, each of them parametrised by $t\in [0,1]$:
\begin{equation}
\eta_t^a(x,y)=(1-t)\eta(x,y)+t\eta(y,x)
\end{equation}
\begin{equation}
\eta_t^m(x,y)=\max\{(1-t)\eta(x,y), t\eta(y,x)\}.
\end{equation}  
In these formulae and others which will come later, the superscripts $a$ and $m$ stand for ``arithmetic" and ``max".  

We recall a notion from \cite{PT2}.
Let $X$ be a topological space. A collection $\Gamma$ of continuous paths $\gamma: [a,b]\to X$, where $[a,b]$ is a compact interval of $\R$, is called a {\it semigroupoid
of paths} on $X$ if the following two properties are satisfied:
\begin{enumerate}
\item
If $\gamma_1:[a,b]\to X$ and $\gamma_2:[c,d]\to X$ are two paths in $\Gamma$ with $\gamma_1(b)=\gamma_2(c)$, then the concatenation $\gamma_1*\gamma_2$ also lies 
in $\Gamma$.
\item
Any constant path belongs to $\Gamma$. 
\end{enumerate}
A semigroupoid of paths satisfies the usual axioms of a semigroupoid in the algebraic (or categorical) sense.

\begin{definition}[Weak length structure]
Let $X$ be a topological space and $\Gamma$ a semigroupoid of paths on $X$. A weak length structure on $(X,\Gamma)$ is a function 
$l:\Gamma \to [0,\infty]$ such that the following conditions are satisfied.
\begin{enumerate}
\item
If $\gamma_1$ and $\gamma_2$ are in $\Gamma$ and $\gamma_1*\gamma_2$ is defined and is in $\Gamma$, then $l(\gamma_1*\gamma_2)=l(\gamma_1)+l(\gamma_2)$.
\item
For any constant path $c$, we have $l(c)=0$.
\item
If $f: [c,d]\to [a,b]$ is a continuous surjective non-decreasing function and $\gamma: [a,b]\to X$ is in $\gamma$, then $l(\gamma\circ f)=l(\gamma)$ whenever $\gamma\circ f\in \Gamma$.
\end{enumerate}
\end{definition}

\begin{definition}[Weak length structure for a weak metric space]
\label{rectifiable}
	Let $(X,\eta)$ be a weak metric space. (Recall that all our spaces satisfy Busemann's axiom so that they are equipped with the topology defined after \cref{Busemann}.) Let $c:[a,b]\to X$ be a continuous path in $X$. We define $l(\eta)(c)$ as the supremum of all sum of the form 
	
	$$\sum_{i=0}^n d(f(t_i),f(t_{i+1})),$$
	\noindent where $a=t_0\leq t_1\leq t_{n+1}=b$ is a finite partition of $[a,b]$. Then $l(\eta)$ is a weak length structure on the semigroupoid of continuous paths on $X$.
\end{definition}

\begin{definition}[Weak length space]
A weak length space  is a triple $(X,\Gamma,l)$, where $X$ is a topological space, $\Gamma$ a semigroupoid of continuous paths on $X$, and $l$ a weak length structure on $(X,\Gamma)$.
\end{definition}

There is a natural weak metric on a weak length space:
\begin{definition}[Weak metric associated with a weak length space]
If $(X,\Gamma,l)$ is a weak length space, then the weak metric induced by $l$  on $X$ is the function on $X\times X$ defined, for each $x,y\in X$, by
$$\delta(l)(x,y)=\inf\{l(\gamma)\mid \gamma \in\Gamma_{x,y}\}$$
 where $\Gamma_{x,y}$ is the subset of $\Gamma$ consisting of paths joining $x$ to $y$.
 If $\Gamma_{x,y}$ is empty, we define $\delta(l)(x,y)$ to be $\infty$.
\end{definition}
Let $\Gamma$ be a semigroupoid of paths on a weak length space $(X,\Gamma, l)$ such that for each $\gamma \in \Gamma$, the inverse path $\gamma^{-1}$ of $\gamma$ is also in $\Gamma$. We define a family of length structures on $X$, parametrised by $t\in [0,1]$, by the formula
\begin{equation}\label{eq:family}
l_t(\gamma)=(1-t)l(\gamma)+tl(\gamma^{-1}).
\end{equation}

\begin{lemma}
\label{first-inequality}
Let $(X,\Gamma,l)$ be a weak length space, and $\delta(l)$ the induced weak metric on $X$. Then  for all $x,y \in X$ and for all $t\in [0,1]$, the following inequality holds:
$$\delta(l_t)(x,y)\geq \delta(l)_t^a(x,y):=(\delta(l))_t^a(x,y).$$

\begin{proof}
We can assume that $\delta(l_t(x,y)) \neq \infty$ since the inequality trivially holds in the case when $\delta(l_t(x,y))=\infty$.
For any $\epsilon >0$, we can find an element $\gamma\in \Gamma_{x,y}$ satisfying
\begin{align*}
\delta(l_t)(x,y)\geq l_t(\gamma)-\epsilon &= (1-t)l(\gamma)+tl(\gamma^{-1})-\epsilon\\
&\geq (1-t)\delta(l)(x,y)+t\delta(l)(y,x)-\epsilon\\
&=\delta(l)_t^a(x,y)-\epsilon.
\end{align*}
Since this inequality is valid for any $\epsilon >0$, the result follows.
\end{proof}
\end{lemma}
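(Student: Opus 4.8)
The plan is to unwind the two infima at play and to exploit the standing hypothesis that $\Gamma$ is closed under path inversion. The key observation is that any path $\gamma$ competing in the infimum that defines $\delta(l_t)(x,y)$ is itself a competitor in the infimum defining $\delta(l)(x,y)$, while its inverse $\gamma^{-1}$ is a competitor in the infimum defining $\delta(l)(y,x)$; since $l_t$ is by construction the convex combination $(1-t)l(\gamma)+t\,l(\gamma^{-1})$, the estimate should then drop out by monotonicity of the infimum.

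Concretely, I would first dispose of the degenerate case: if $\delta(l_t)(x,y)=\infty$ the asserted inequality is trivial, so I may assume $\delta(l_t)(x,y)<\infty$, hence $\Gamma_{x,y}\neq\emptyset$. Then I would fix an arbitrary $\gamma\in\Gamma_{x,y}$. By hypothesis $\gamma^{-1}\in\Gamma_{y,x}$, and directly from the definition of the weak metric induced by a weak length structure we have $l(\gamma)\geq\delta(l)(x,y)$ and $l(\gamma^{-1})\geq\delta(l)(y,x)$. Since $1-t\geq 0$ and $t\geq 0$, multiplying and adding yields
\[
l_t(\gamma)=(1-t)\,l(\gamma)+t\,l(\gamma^{-1})\geq(1-t)\,\delta(l)(x,y)+t\,\delta(l)(y,x)=\delta(l)_t^a(x,y).
\]
Taking the infimum of the left-hand side over all $\gamma\in\Gamma_{x,y}$ then gives $\delta(l_t)(x,y)\geq\delta(l)_t^a(x,y)$. (If one prefers to keep track of the infimum explicitly, one chooses for each $\epsilon>0$ a path $\gamma\in\Gamma_{x,y}$ with $l_t(\gamma)\leq\delta(l_t)(x,y)+\epsilon$, runs the same chain of inequalities, and lets $\epsilon\to 0$ at the end; that is the version I would actually write out in full.)

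I do not anticipate any genuine obstacle here: the statement is in essence a monotonicity property of the infimum together with the convex decomposition $l_t=(1-t)l+t\,(l\text{ composed with path inversion})$. The only place where a hypothesis is truly used is the step $\gamma^{-1}\in\Gamma$, which is precisely why the lemma is formulated for semigroupoids of paths that are closed under inversion; a minor, purely bookkeeping point is to check that the inequality still makes sense in $[0,\infty]$ when some of $\delta(l_t)(x,y)$, $\delta(l)(x,y)$, $\delta(l)(y,x)$ equal $+\infty$, which the preliminary reduction to $\delta(l_t)(x,y)<\infty$ handles.
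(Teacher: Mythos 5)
Your argument is correct and is essentially identical to the paper's proof: both reduce to the degenerate infinite case first, then pick a near-optimal (or arbitrary) path $\gamma\in\Gamma_{x,y}$, use $l(\gamma)\geq\delta(l)(x,y)$ and $l(\gamma^{-1})\geq\delta(l)(y,x)$, and pass to the infimum via the $\epsilon$-approximation. No discrepancies to report.
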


\begin{definition}[Minimal and biminimal paths, geodesic space]
\label{def:geodesic}
Let $(X,\Gamma,l)$ be a weak length structure, and $\delta(l)$ the induced weak metric. A continuous path $\gamma \in \Gamma_{x,y}$ is said to be minimal (or a geodesic)
if $l(\gamma)=\delta(l)(x,y)$ and $l(\gamma) \neq \infty$. The path $\gamma$ is called bi-minimal (or a bi-geodesic) between $x$ and $y$ if $\gamma$ is minimal, $\gamma^{-1}\in \Gamma_{y,x}$ and $\gamma^{-1}$ is also  minimal, that is,
$l(\gamma^{-1})=\delta(l)(y,x)$.  We say that  $X$ is a geodesic space if there is a minimal path between any two points of $X$.
\end{definition}

\begin{proposition}
\label{first-equality}
Let $(X,\Gamma,l)$ be a weak length structure, and $\delta(l)$ the induced weak metric.
Assume that  $\Gamma$ is a semigroupoid of continuous paths on $X$ such that for each $\alpha \in \Gamma$, its inverse $\alpha^{-1}$ is in $\Gamma$.
Consider the family of length structures on $X$ parametrised by $t\in [0,1]$ defined by \cref{eq:family},
and let $x$ and $y$
be two points in $X$ such that there exists a bi-minimal path $\gamma \in \Gamma$ joining $x$ to $y$.  Then
$$\delta(l_t)(x,y)= \delta(l)_t^a(x,y).$$
\end{proposition}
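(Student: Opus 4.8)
The plan is to prove the reverse inequality to the one already established in \cref{first-inequality}, so that combining the two yields the claimed equality. By \cref{first-inequality} we already know $\delta(l_t)(x,y) \geq \delta(l)_t^a(x,y)$ for all $x,y$ and all $t \in [0,1]$, so it suffices to show $\delta(l_t)(x,y) \leq \delta(l)_t^a(x,y)$ under the extra hypothesis that a bi-minimal path $\gamma \in \Gamma_{x,y}$ exists.

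The key observation is that the bi-minimal path $\gamma$ is an optimal competitor in \emph{both} directions simultaneously. First I would recall what bi-minimality gives: $\gamma \in \Gamma_{x,y}$ with $l(\gamma) = \delta(l)(x,y)$, and $\gamma^{-1} \in \Gamma_{y,x}$ with $l(\gamma^{-1}) = \delta(l)(y,x)$. Now I simply use $\gamma$ itself as a test path for the length structure $l_t$ joining $x$ to $y$. Since $\delta(l_t)(x,y)$ is the infimum of $l_t$ over $\Gamma_{x,y}$, and $\gamma \in \Gamma_{x,y}$, we get
\[
\delta(l_t)(x,y) \leq l_t(\gamma) = (1-t)l(\gamma) + t\,l(\gamma^{-1}) = (1-t)\delta(l)(x,y) + t\,\delta(l)(y,x) = \delta(l)_t^a(x,y).
\]
This is the whole content of the reverse inequality; no limiting argument is even needed because the infimum is attained along $\gamma$.

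Combining this with \cref{first-inequality} gives $\delta(l_t)(x,y) = \delta(l)_t^a(x,y)$, which is exactly the assertion of the proposition. I do not expect any serious obstacle here: the proof is essentially a one-line consequence of the definitions once one notices that bi-minimality is precisely the hypothesis needed to make $\gamma$ simultaneously optimal in both orientations, which is what $l_t$ — being a convex combination of $l(\gamma)$ and $l(\gamma^{-1})$ — requires. The only point to be careful about is that $l_t(\gamma)$ is finite (which follows since both $l(\gamma)$ and $l(\gamma^{-1})$ are finite by the definition of bi-minimal path), so that $\delta(l_t)(x,y) \neq \infty$ and the equality is between genuine real numbers rather than a vacuous statement about $\infty$.
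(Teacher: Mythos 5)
Your proof is correct and follows essentially the same route as the paper: both reduce to the upper bound via \cref{first-inequality} and then obtain it by using the bi-minimal path $\gamma$ as a competitor for $l_t$ in $\Gamma_{x,y}$, so that $\delta(l_t)(x,y)\leq (1-t)l(\gamma)+t\,l(\gamma^{-1})=\delta(l)_t^a(x,y)$. Your added remark on finiteness is a harmless extra precaution.
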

\begin{proof}
By \cref{first-inequality}, we only need to prove the inequality 
$$
\delta(l_t)(x,y)\leq\delta(l)_t^a(x,y).$$

We can see this as follows:
\begin{align*}
\delta(l)_t^a(x,y)&=(1-t)\delta(l)(x,y)+t\delta(l)(y,x)\\
&=(1-t)l(\gamma)+tl(\gamma^{-1})\\
&\geq \inf\{ (1-t) l(\alpha)+tl(\alpha^{-1}): \alpha \in \Gamma_{x,y}\}\\
&=\delta(l_t)(x,y).
\end{align*}
\end{proof}

Now let $l$ and $l'$ be two length structures on $(X,\Gamma)$, and $\gamma$ a path in $\Gamma$. For each $t\in [0,1]$, we define the following length structure:
\begin{equation}
[(1-t)l+tl'](\gamma)=(1-t)l(\gamma)+tl'(\gamma).
\end{equation}

If $\eta$ and $\eta'$ are two weak metrics on $X$, then clearly $(1-t)\eta+t\eta'$ and $\max\{(1-t)\eta, t\eta'\}$ are weak metrics on $X$ as well.

\begin{lemma}
\label{average}
Let $l$ and $l'$ be two weak length structures on $(X,\Gamma)$. Then the following inequality holds:
$$\delta((1-t)l+tl')(x,y)\geq[(1-t)\delta(l)+t\delta(l')](x,y).$$
\end{lemma}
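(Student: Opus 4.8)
The plan is to mimic the argument used in \cref{first-inequality}, replacing the convex combination of $l$ and $l^{-1}$ there by the convex combination of the two length structures $l$ and $l'$. The inequality asserts that the distance function induced by the averaged length structure dominates the average of the two induced distance functions, which is the expected direction: a single path realising (nearly) the infimum for $(1-t)l+tl'$ can only be longer (in each of $l$ and $l'$ separately) than the respective infima.

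First I would fix $x,y\in X$ and dispose of the trivial case: if $\delta((1-t)l+tl')(x,y)=\infty$ there is nothing to prove, so assume it is finite, which in particular forces $\Gamma_{x,y}\neq\emptyset$. Then, given $\epsilon>0$, I would choose a path $\gamma\in\Gamma_{x,y}$ with
\[
\delta((1-t)l+tl')(x,y)\geq [(1-t)l+tl'](\gamma)-\epsilon=(1-t)l(\gamma)+tl'(\gamma)-\epsilon.
\]
Since $\gamma\in\Gamma_{x,y}$, by definition of the induced weak metrics we have $l(\gamma)\geq\delta(l)(x,y)$ and $l'(\gamma)\geq\delta(l')(x,y)$. (Here I should be slightly careful: if one of $\delta(l)(x,y)$ or $\delta(l')(x,y)$ is $\infty$, then the corresponding $l(\gamma)$ or $l'(\gamma)$ is also $\infty$, but then the left-hand side $\delta((1-t)l+tl')(x,y)$ would be $\infty$ as well — contradicting our assumption — unless the offending term is killed by a zero coefficient; in the boundary cases $t=0$ or $t=1$ the statement reduces to a tautology, so I may as well assume $t\in(0,1)$ and then both $\delta(l)(x,y)$ and $\delta(l')(x,y)$ are finite.) Combining, $(1-t)l(\gamma)+tl'(\gamma)\geq(1-t)\delta(l)(x,y)+t\delta(l')(x,y)=[(1-t)\delta(l)+t\delta(l')](x,y)$, so
\[
\delta((1-t)l+tl')(x,y)\geq[(1-t)\delta(l)+t\delta(l')](x,y)-\epsilon,
\]
and letting $\epsilon\to0$ gives the claim.

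There is essentially no hard step here; the only subtlety worth stating explicitly is the handling of infinite values and the endpoint weights $t=0,1$, which is why I would phrase the inequality $l(\gamma)\geq\delta(l)(x,y)$ as an immediate consequence of the definition of $\delta(l)$ as an infimum over $\Gamma_{x,y}$ and note that the coefficients $(1-t)$ and $t$ are nonnegative so the inequality is preserved under the convex combination. The whole argument is the natural analogue of \cref{first-inequality}, with the averaging now taking place between two length structures on the same semigroupoid rather than between a length structure and its pullback under path reversal.
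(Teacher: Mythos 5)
Your proof is correct and follows essentially the same route as the paper's: choose $\gamma\in\Gamma_{x,y}$ nearly realising the infimum for $(1-t)l+tl'$, bound $l(\gamma)\geq\delta(l)(x,y)$ and $l'(\gamma)\geq\delta(l')(x,y)$, and let $\epsilon\to 0$. Your extra care with infinite values and the endpoint weights $t=0,1$ is a harmless (indeed slightly more thorough) elaboration of what the paper dismisses as the trivial case.
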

\begin{proof}
We can assume that $\delta(1-t)l+tl')(x,y) \neq \infty$, for otherwise the inequality trivially holds.
For every $\epsilon >0$, we can find an element $\gamma \in \Gamma_{x,y}$ satisfying
\begin{align*}
\delta((1-t)l+tl')(x,y)&\geq (1-t)l(\gamma)+tl'(\gamma)-\epsilon\\\
&\geq (1-t)\delta(l)(x,y)+t\delta(l')(x,y)-\epsilon,
\end{align*}
 from which the result follows.
\end{proof}

\begin{proposition}
\label{average-maximum}
Let $l$ and $l'$ be two weak length structures on $(X,\Gamma)$. Assume that $x,y \in X$ are two points such that there exists $\gamma\in \Gamma_{x,y}$ 
which is minimal for both $l$ and $l'$. Then, for each $t\in [0,1]$, we have
\[\delta((1-t)l+tl')(x,y)=[(1-t)\delta(l)+t\delta(l')](x,y).\]
\end{proposition}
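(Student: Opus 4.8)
The plan is to mirror the proof of \cref{first-equality}, using \cref{average} for the nontrivial inequality and exhibiting the common minimal path $\gamma$ to establish the reverse one. By \cref{average}, we already have
\[
\delta((1-t)l+tl')(x,y)\geq[(1-t)\delta(l)+t\delta(l')](x,y),
\]
so it remains to prove the opposite inequality $\delta((1-t)l+tl')(x,y)\leq (1-t)\delta(l)(x,y)+t\delta(l')(x,y)$.

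For the reverse inequality I would use the hypothesis directly. Let $\gamma\in\Gamma_{x,y}$ be a path that is minimal for both $l$ and $l'$, so that $l(\gamma)=\delta(l)(x,y)$ and $l'(\gamma)=\delta(l')(x,y)$, both finite. Then
\[
(1-t)\delta(l)(x,y)+t\delta(l')(x,y)=(1-t)l(\gamma)+tl'(\gamma)=[(1-t)l+tl'](\gamma)\geq \inf\{[(1-t)l+tl'](\alpha):\alpha\in\Gamma_{x,y}\}=\delta((1-t)l+tl')(x,y).
\]
Combining the two inequalities gives the claimed equality. This is essentially a one-line argument once the setup is unwound; there is no real obstacle, since the existence of the common minimal $\gamma$ is assumed rather than proved. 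The only point requiring a word of care is that $l(\gamma)$ and $l'(\gamma)$ are finite (which follows from $\gamma$ being minimal in the sense of \cref{def:geodesic}, since a minimal path has finite length by definition), so that the weighted combination $(1-t)l(\gamma)+tl'(\gamma)$ is well-defined and no $\infty-\infty$ issue arises; and for $t\in\{0,1\}$ the statement is immediate from minimality of $\gamma$ for the single relevant length structure.

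I would present the proof as: invoke \cref{average} for $\geq$; then a short display chain as above for $\leq$; then conclude. No Arzel\`a--Ascoli, completeness, or Finsler input is needed here — this proposition is purely formal, in contrast to the examples in \cref{s:Examples} where the coincidence of the two metrics genuinely uses the Riemannian structure and straight-line geodesics. The substantive content has been isolated into the hypothesis ``there exists $\gamma\in\Gamma_{x,y}$ minimal for both $l$ and $l'$,'' and verifying that hypothesis in concrete situations (as in \cref{ex:3} and \cref{ex:4}, where the common Euclidean straight segment does the job) is where the actual work lives.
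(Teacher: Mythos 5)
Your proof is correct and follows the same route as the paper's: invoke \cref{average} for one inequality, then evaluate the combined length structure on the common minimal path $\gamma$ and compare with the infimum for the other. The extra remarks on finiteness and the trivial endpoint cases $t\in\{0,1\}$ are harmless additions; nothing is missing.
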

\begin{proof}
By \cref{average}, we only need to prove the inequality
$$\delta((1-t)l+tl')(x,y)\leq[(1-t)\delta(l)+t\delta(l')](x,y).$$

This can be seen as follows:
\begin{align*}
[(1-t)\delta(l)+t\delta(l')](x,y)&=(1-t)l(\gamma)+tl'(\gamma)\\
 &\geq\inf\{(1-t)l(\alpha)+tl'(\alpha):\alpha \in \Gamma_{x,y}\}\\
 &= \delta((1-t)l+tl')(x,y).
 \end{align*}
\end{proof}

\section{Finsler Structures}\label{s:Weak-Finsler}

We start with the general notion of Finsler structure.
Given a smooth manifold $M$, we denote the tangent space at $x \in M$ by  $T_xM$, and the  tangent bundle by $TM$.

\begin{definition}[Finsler Structure]\label{def:weak-Finsler}
A Finsler structure on a smooth manifold $M$  is a continuous function $F: TM\to [0,\infty)$ such that for every point $x$ in $M$, the restriction of $F$ to 
$T_xM$, $F(x,\cdot):=F\lvert_{T_xM}$, is a weak norm, that is, a function which satisfies the following properties:
\begin{enumerate}
\item
$F(x,v_1+v_2)\leq F(x,v_1)+F(x,v_2)$ for all $v_1,v_2 \in T_xM$.
\item
$F(x,\lambda v)=\lambda F(x,v)$ for all $\lambda \geq 0$ and for all $v \in T_xM$.
\item
$F(x,v)=0$ if and only if $v=0$.
\end{enumerate}
\end{definition}
In other words, a Finsler structure is a continuous family of weak  norms defined on the tangent spaces of $M$.

 \begin{definition}[The weak length structure associated with a Finsler structure]
Let $F$ be a Finsler structure on a smooth manifold $M$. Then $F$ defines a weak length structure $l_F$ whose associated semi-groupoid of paths is  the set of piecewise $C^1$-paths on $M$ and where the length of a piecewise $C^1$-path $\beta: [a,b]\to M$ is given by the quantity
\begin{equation}
\label{eq:int}
l_F(\beta)=\int_{a}^{b}F(\beta(s),\dot{\beta}(s)) \ ds.
\end{equation}
\end{definition}

The function $F$ in the above definition is called the \emph{Lagrangian} of the Finsler structure $F$. 
 The metric $d(F)(x,y)=\delta(l_F)(x,y))$ associated with the induced weak length structure $l_F$ is called the \emph{metric induced by the Finsler structure $F$}.  A metric on $M$ is called \emph{Finsler} (or \emph{Finslerian}) 
if it is induced by some Finsler structure.

%

 \begin{definition}[$C^{\infty}$-Finsler structure]
	A Finsler structure on a smooth manifold $M$ is called a {\it $C^{\infty}$-Finsler structure} if $F$ is a $C^{\infty}$ function on $TM-\{0\}$, where $TM-\{0\}$ is the complement of the zero section in the tangent bundle $TM$ of $M$.

 \end{definition}
 
We now prove that the metrics induced by Finsler structures satisfy the Busemann  axiom  introduced in \cref{Busemann}.
We note that this property, under a stronger assumption on the regularity of $F$, is mentioned in \S6 of \cite{BCS}.

\begin{proposition}
\label{Finsler Busemann}
Let $F$ be a Finsler structure on a smooth manifold $M$, and let $d(F)$ be the associated metric on $M$. Let $(x_i)$ be a sequence in $M$.
Then the following three conditions are equivalent:
\begin{enumerate}[(i)]
\item $d(F)(x_i,x)\to0$, 
\item$d(F)(x,x_i) \to 0$, and 
\item $(x_i)$ converges to $x$ with respect to the topology of $M$.
\end{enumerate}
In particular, Busemann's axiom holds for $d(F)$.
\end{proposition}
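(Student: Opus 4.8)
The plan is to reduce everything to a comparison between $F$ and a fixed Euclidean norm inside a coordinate chart around $x$. First I would fix a chart $\phi\colon U\to\R^n$ with $\phi(x)=0$ whose image contains the closed Euclidean ball $\bar B(0,3)$, and put $K=\phi^{-1}(\bar B(0,3))$, a compact subset of $M$. Transporting $F$ to coordinates gives a continuous function $\tilde F$ on $\bar B(0,3)\times\R^n$ which is a weak norm in the second variable for each fixed value of the first; since $\tilde F$ is continuous and everywhere positive on the compact set $\bar B(0,3)\times S^{n-1}$, it attains a minimum $c>0$ and a maximum $C<\infty$ there, and positive homogeneity upgrades this to
$$c\,\lvert v\rvert\ \le\ \tilde F(p,v)\ \le\ C\,\lvert v\rvert\qquad(p\in\bar B(0,3),\ v\in\R^n).$$
Integrating along paths, this says that any piecewise $C^1$ path contained in $K$ has $F$-length between $c$ and $C$ times its Euclidean length measured in the chart.

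With this in hand the three implications go as follows. For (iii)$\Rightarrow$(i) and (iii)$\Rightarrow$(ii): if $x_i\to x$ in $M$ then eventually $\phi(x_i)\in\bar B(0,1)$ and $\lvert\phi(x_i)\rvert\to0$, and the coordinate straight segment from $x$ to $x_i$ together with its reverse both stay in $\phi^{-1}(\bar B(0,1))\subseteq K$, so the upper bound yields $d(F)(x,x_i),\,d(F)(x_i,x)\le C\lvert\phi(x_i)\rvert\to0$. For (i)$\Rightarrow$(iii) and (ii)$\Rightarrow$(iii) I would argue by contradiction: if $x_i\not\to x$, pass to a subsequence with $x_i\notin\phi^{-1}(B(0,1))$ for all $i$, and take any piecewise $C^1$ path $\gamma$ joining $x$ and $x_i$. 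Using that $\phi^{-1}(\bar B(0,1))$ is compact, hence closed in $M$, one locates a parameter $s_0$ at which $\gamma$ meets $\phi^{-1}(\partial B(0,1))$ and such that the sub-arc of $\gamma$ between $s_0$ and the endpoint equal to $x$ remains inside $\phi^{-1}(\bar B(0,1))\subseteq K$; in coordinates that sub-arc runs from a point of Euclidean norm $1$ to the origin, so its Euclidean length, and therefore $l_F(\gamma)$, is at least $c$. Passing to the infimum over $\gamma$ gives $d(F)(x_i,x)\ge c>0$ and $d(F)(x,x_i)\ge c>0$, contradicting (i) and (ii) respectively. Finally, the equivalence (i)$\Leftrightarrow$(ii) is exactly Busemann's axiom (\cref{Busemann}) for $d(F)$.

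I expect the last step to be the only real obstacle: because $d(F)$ is defined as an infimum of lengths over \emph{all} piecewise $C^1$ paths in $M$ — not merely over those staying in the chart — one cannot simply compare with the Euclidean distance, and must instead show that a path from $x$ to a point bounded away from $x$ is forced to be long. This is precisely what the uniform lower bound $\tilde F\ge c\lvert\cdot\rvert$ on the compact neighbourhood $K$ delivers, once one notes that such a path has to traverse a fixed Euclidean annulus in the chart. The rest — the upper bounds and the bookkeeping with the parameter $s_0$ — is routine.
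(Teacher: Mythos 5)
Your argument is correct and is essentially the paper's proof: both rest on a uniform two-sided comparison $c\,\lvert v\rvert\le F\le C\,\lvert v\rvert$ against a reference norm on a compact neighbourhood of $x$ (the paper uses a background Riemannian metric via its \cref{comparison}, you use a chart Euclidean norm), and both derive (i),(ii)$\Rightarrow$(iii) from the observation that any path joining $x$ to a point outside a fixed neighbourhood must traverse an annulus and hence has $F$-length bounded below. One small repair: failure of convergence only yields a subsequence escaping \emph{some} neighbourhood of $x$, so you should replace the radius $1$ by a sufficiently small $r>0$ contained in that neighbourhood (the lower bound then becomes $cr>0$), exactly as the paper's ``for any small $\epsilon>0$'' handles it.
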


To prove this proposition, we first show the following elementary lemma.
\begin{lemma}
\label{comparison}
Let $V$ be a finite-dimensional vector space over $\reals$, and let $\Vert \cdot \Vert_1$ and $\Vert \cdot \Vert_2$ be two weak norms on $V$.
Then there is a positive constant $c$ such that for any $v$ in $V$, we have $\Vert v \Vert_2 \leq c \Vert v \Vert_1$.
\end{lemma}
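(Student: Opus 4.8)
The plan is to reduce everything to comparison with a fixed Euclidean norm on $V$, exploiting the compactness of the Euclidean unit sphere, and being careful at each step about the absence of symmetry of the weak norms $\Vert\cdot\Vert_1$ and $\Vert\cdot\Vert_2$. The scheme is the classical equivalence-of-norms argument, with only minor modifications to accommodate weak norms.

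First I would fix a basis $e_1,\dots,e_n$ of $V$ and let $\Vert\cdot\Vert_e$ denote the associated Euclidean norm, $\Vert\sum_i a_ie_i\Vert_e=(\sum_i a_i^2)^{1/2}$; this is an honest (symmetric) norm inducing the usual topology on $V$, with respect to which the unit sphere $S=\{v\in V:\Vert v\Vert_e=1\}$ is compact. The first observation is that \emph{any} weak norm $\Vert\cdot\Vert$ on $V$ is bounded above by a multiple of $\Vert\cdot\Vert_e$: writing $v=\sum_i a_ie_i$ and using the triangle inequality together with positive homogeneity --- treating a term with $a_i\geq 0$ via $\Vert a_ie_i\Vert=a_i\Vert e_i\Vert$ and a term with $a_i<0$ via $\Vert a_ie_i\Vert=|a_i|\,\Vert -e_i\Vert$ --- one obtains $\Vert v\Vert\leq K\sum_i|a_i|\leq K\sqrt n\,\Vert v\Vert_e$, where $K=\max_i\max\{\Vert e_i\Vert,\Vert -e_i\Vert\}$ and the last inequality is Cauchy--Schwarz. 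Applying this to $\Vert\cdot\Vert_1$ and to $\Vert\cdot\Vert_2$ yields constants $C_1,C_2>0$ with $\Vert v\Vert_j\leq C_j\Vert v\Vert_e$ for $j=1,2$ and all $v\in V$.

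Next I would upgrade the upper bound for $\Vert\cdot\Vert_1$ to continuity. From $\Vert v\Vert_1\leq \Vert v-w\Vert_1+\Vert w\Vert_1\leq C_1\Vert v-w\Vert_e+\Vert w\Vert_1$ we get $\Vert v\Vert_1-\Vert w\Vert_1\leq C_1\Vert v-w\Vert_e$; exchanging $v$ and $w$ and using the symmetry $\Vert w-v\Vert_e=\Vert v-w\Vert_e$ of the Euclidean norm gives $|\,\Vert v\Vert_1-\Vert w\Vert_1\,|\leq C_1\Vert v-w\Vert_e$, so $\Vert\cdot\Vert_1$ is Lipschitz, hence continuous, for the Euclidean topology. (It is exactly here that the symmetry of the reference norm $\Vert\cdot\Vert_e$ is used to circumvent the asymmetry of $\Vert\cdot\Vert_1$.) Now $\Vert\cdot\Vert_1$ restricted to the compact set $S$ is continuous and everywhere strictly positive --- since $\Vert v\Vert_1=0$ forces $v=0\notin S$ --- so it attains a minimum $m>0$ on $S$. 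By positive homogeneity, for any nonzero $v$, setting $u=v/\Vert v\Vert_e\in S$ we have $\Vert v\Vert_1=\Vert v\Vert_e\,\Vert u\Vert_1\geq m\Vert v\Vert_e$, and this trivially holds for $v=0$ too. Combining, $\Vert v\Vert_2\leq C_2\Vert v\Vert_e\leq (C_2/m)\Vert v\Vert_1$ for all $v\in V$, so the lemma holds with $c=C_2/m$.

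The only genuinely delicate point is ensuring the asymmetry of the weak norms does no harm: this is handled by always comparing against the symmetric Euclidean norm, by keeping track of $\Vert -e_i\Vert$ (rather than $\Vert e_i\Vert$) when a coefficient is negative, and by noting that in the reverse triangle inequality the Euclidean distance enters symmetrically. Everything else is routine, so I would not belabour the computations.
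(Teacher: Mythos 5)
Your proof is correct and follows essentially the same route as the paper's: both arguments fix a basis, introduce the associated Euclidean norm, and use compactness of its unit sphere to bound each weak norm above and below by multiples of the Euclidean one. The only difference is that you explicitly justify the continuity of the weak norms (via the upper bound and the Lipschitz estimate), a point the paper's proof takes for granted, so your write-up is if anything more complete.
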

\begin{proof}
Fixing a basis of $V$, we can define a ``standard" Euclidean norm on $V$, which we denote by $\Vert \cdot \Vert$.
We have only to prove the inequalities in both directions between $\Vert \cdot \Vert$ and positive scalar multiples of $\Vert \cdot \Vert_2$.
Let $S$ be the unit sphere of $V$ with respect to $\Vert \cdot \Vert$.
Since $S$ is compact and $\Vert \cdot \Vert_2$ is continuous, $\sup_{s \in S} \Vert s \Vert_2$ is finite and $\inf_{s\in S}\Vert s \Vert_2$ is positive, which we set to be $c_1$ and $c_2$ respectively.
Then for any $v \in V$, we have $\displaystyle\Vert v\Vert_2=\Vert v\Vert \Vert \frac{v}{\Vert v\Vert}\Vert_2$ and $\displaystyle c_2 \Vert v\Vert \leq \Vert v\Vert\Vert \frac{v}{\Vert v\Vert}\Vert_2\leq c_1 \Vert v\Vert$, and we are done.
\end{proof}

\begin{proof}[Proof of \cref{Finsler Busemann}]
It is sufficient to prove the equivalence between (i) and (iii), for this also implies the equivalence between (ii) and (iii).

We fix some Riemannian metric $g$ on $M$.
We denote by $d$ the distance function induced from $g$ and by $\Vert \cdot \Vert$ the  norm associated with $g$.
We shall prove that $d(F)(x_i, x) \to 0$ if and only if $d(x_i,x) \to 0$.
By \cref{comparison}, there exists a constant $c$ such that $\displaystyle c^{-1} \leq \frac{\Vert v \Vert }{F(x,v)} \leq c$ for all $v \in T_xM \setminus \{\mathbf 0\}$.
Moreover, since  $F(x, v)$ is continuous with respect to $x \in M$, then for any $x \in M$, there is neighbourhood $U$ of $x$ and a constant $K \geq 1$ such that 
\begin{equation}K^{-1} \leq \frac{\Vert v\Vert}{F(y,v)} \leq K \label{comparison with d}\end{equation}
 for every $y \in U$ and every $v \in T_y M$.

Now, let $(x_i)$ be a sequence in $M$ with $d(F)(x_i, x) \to 0$.
For any small $\epsilon >0$, the $\epsilon$-neighbourhood $U_d(\epsilon)$ of $x$ with respect to $d$ is contained in $U$.
For all $y \in M \setminus U_d(\epsilon)$, any arc connecting $x$ to $y$ has a part contained in $U$ whose length with respect to $d$ is at least $\epsilon$.
By \cref{comparison with d}, for any $y \in M \setminus U_d(\epsilon)$, any piecewise $C^1$-path connecting $x$ to $y$ has length with respect to $F$ at least $K^{-1} \epsilon$.
Since $d(F)(x_i, x) \to 0$, there is $i_0 \in \naturals$ such that if $i \geq i_0$, then $x_i$ is contained in $U_d(\epsilon)$.
Since $\epsilon$ is an arbitrarily small positive number, this shows that $(x_i)$ converges to $x$ with respect to $d$.

To show the reverse implication, let $(x_i)$ be a sequence in $M$ with $d(x_i, x)\to 0$.
For any $\epsilon >0$, there is $i_0$ such that if $i \geq i_0$, then $x_i$ is contained in $U$ and can be joined to $x$ by a piecewise $C^1$-arc $\alpha$ with $d$-length less than $\epsilon$.
Then $\displaystyle d(F)(x_i, x) \leq \int_\alpha F(\alpha(t), \dot{\alpha}(t))dt \leq \int_\alpha K\Vert \dot{\alpha}(t)\Vert dt< K\epsilon$.
Thus we see that $d(F)(x_i, x) \to 0$.
\end{proof}
%

We now start to study geodesics and completeness in our Finsler spaces.

Since $(M, d(F))$ is a weak metric space satisfying Busemann's axiom, we can define the length for any continuous path as in \cref{rectifiable}.
We denote the length of a path $\alpha$ in this sense by $l({d(F)})(\alpha)$.
We say that a continuous path is {\em rectifiable} if its length is finite.
A result of Busemann--Mayer shows that the two notions of length are equivalent for piecewise $C^1$-paths \cite{BM}:
\begin{lemma}
\label{B-M}
For any piecewise $C^1$-path $\alpha \colon [a,b] \to M$, we have $l({d(F)})(\alpha)=l_F(\alpha)$.
\end{lemma}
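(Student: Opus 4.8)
The plan is to prove $l(d(F))(\alpha) = l_F(\alpha)$ for a piecewise $C^1$-path $\alpha\colon[a,b]\to M$ by a local analysis, comparing the $d(F)$-distance between nearby points on $\alpha$ with the integral $\int F(\alpha(s),\dot\alpha(s))\,ds$ over small subintervals. The key local estimate is that, near any point $x\in M$ and in a fixed coordinate chart, the weak norm $F(y,\cdot)$ varies continuously with $y$, so on a small enough neighbourhood it is uniformly close (multiplicatively, or additively after using Lemma~\ref{comparison}) to the fixed norm $F(x,\cdot)$. This is the same mechanism already used in the proof of \cref{Finsler Busemann}.

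First I would reduce to the case where $\alpha$ is $C^1$ on all of $[a,b]$, since both $l(d(F))$ and $l_F$ are additive over concatenations (the former by \cref{rectifiable}, the latter by definition of the integral), and a piecewise $C^1$-path is a finite concatenation of $C^1$-pieces. Next, working in a single coordinate chart (subdividing $[a,b]$ finitely if necessary so that each piece has image in one chart), fix $\varepsilon>0$ and use uniform continuity of $F$ on the compact set $\{(\alpha(s),v): s\in[a,b],\ \|v\|\le 1\}$ (with respect to the auxiliary Riemannian norm $\|\cdot\|$ from a fixed metric $g$) to find $\delta>0$ such that whenever $|s-s'|<\delta$ one has $|F(\alpha(s),v)-F(\alpha(s'),v)|\le\varepsilon\|v\|$ for all $v$, and also $\|\dot\alpha(s)-\dot\alpha(s')\|<\varepsilon$ by uniform continuity of $\dot\alpha$.

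For the inequality $l(d(F))(\alpha)\le l_F(\alpha)$: given any partition of $[a,b]$, on each subinterval $[t_i,t_{i+1}]$ the restriction $\alpha|_{[t_i,t_{i+1}]}$ is one admissible path joining $\alpha(t_i)$ to $\alpha(t_{i+1})$, so $d(F)(\alpha(t_i),\alpha(t_{i+1}))\le l_F(\alpha|_{[t_i,t_{i+1}]})$; summing gives $\sum_i d(F)(\alpha(t_i),\alpha(t_{i+1}))\le l_F(\alpha)$, and taking the supremum over partitions yields the claim. For the reverse inequality $l_F(\alpha)\le l(d(F))(\alpha)$: on a subinterval $[t_i,t_{i+1}]$ of length $<\delta$, the straight segment in the chart from $\alpha(t_i)$ to $\alpha(t_{i+1})$ has $F$-length close to $F(\alpha(t_i),\alpha(t_{i+1})-\alpha(t_i))$, which by the mean-value estimate and the continuity bound is within $O(\varepsilon)\cdot(t_{i+1}-t_i)$ of $\int_{t_i}^{t_{i+1}}F(\alpha(s),\dot\alpha(s))\,ds$; hence $d(F)(\alpha(t_i),\alpha(t_{i+1}))\ge \int_{t_i}^{t_{i+1}}F(\alpha(s),\dot\alpha(s))\,ds - C\varepsilon(t_{i+1}-t_i)$ for a constant $C$ depending only on the chart and $g$. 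Summing over a partition with mesh $<\delta$ and using that $l(d(F))(\alpha)$ is the supremum over partitions of $\sum_i d(F)(\alpha(t_i),\alpha(t_{i+1}))$, we get $l(d(F))(\alpha)\ge l_F(\alpha)-C\varepsilon(b-a)$, and letting $\varepsilon\to0$ finishes the proof.

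The main obstacle is the reverse inequality, specifically controlling $d(F)(\alpha(t_i),\alpha(t_{i+1}))$ from below: a priori some clever short path between the two points could have smaller $F$-length than the segment of $\alpha$, but this is ruled out because \emph{any} path between two points that are $\|\cdot\|$-close stays (for the relevant portion of its length) inside the chart where the multiplicative comparison \cref{comparison with d} holds, so its $F$-length is at least $K^{-1}$ times its $\|\cdot\|$-length, which is at least $K^{-1}\|\alpha(t_{i+1})-\alpha(t_i)\|$; combined with the continuity of $F$ in the basepoint this gives the needed lower bound up to an $O(\varepsilon)$ error. The subtlety is purely the bookkeeping of the error terms — ensuring the $O(\varepsilon)$ corrections are uniform over the subintervals — which the compactness of $\alpha([a,b])$ and uniform continuity of $F$ and $\dot\alpha$ handle. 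Since this is exactly the Busemann--Mayer theorem, I would in fact just cite \cite{BM}, as the authors do, after noting that the regularity hypotheses there are met by our continuous $F$.
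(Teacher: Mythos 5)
Your proposal lands exactly where the paper does: the paper offers no proof of this lemma at all, simply attributing it to Busemann--Mayer and citing \cite{BM}, which is your final move as well. Your sketch of the underlying argument is the standard one and is correct in outline; the only soft spot is the lower bound for an arbitrary competing path $\beta$ from $\alpha(t_i)$ to $\alpha(t_{i+1})$, where the crude comparison $l_F(\beta)\geq K^{-1}\Vert\alpha(t_{i+1})-\alpha(t_i)\Vert$ is not by itself sharp enough to recover $l_F(\alpha)$ after summing --- its real role is to confine near-optimal paths to the chart and to control the $\varepsilon\Vert v\Vert$ error term, while the sharp lower bound $F(\alpha(t_i),\alpha(t_{i+1})-\alpha(t_i))-O(\varepsilon)$ requires in addition the subadditivity and positive homogeneity of the weak norm $F(x,\cdot)$ to pass from $\int F(x,\dot{\beta}(s))\,ds$ to $F\bigl(x,\int\dot{\beta}(s)\,ds\bigr)$.
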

This has been generalised to absolutely continuous paths, see \cite{Burt,ZZ}.
We recall that an arc $\gamma \colon [a,b] \to M$ is said to be absolutely continuous if for any $\epsilon >0$, there exists $\delta>0$ such that for any disjoint intervals $(a_1, b_1), \dots , (a_n,b_n)$ with total length less than $\delta$, we have $\sum_{i=1}^n d(\gamma(a_i), \gamma(b_i)) < \epsilon$.
It is well known that an absolutely continuous path $\gamma(t)$ has derivative at almost every $t$, and hence that its length $l_F(\gamma)$ is well defined.
The same argument as in the proof of \cref{B-M} works for absolutely continuous paths, and we have the following (see \cite{ZZ}).
\begin{lemma}
\label{two lengths}
For any absolutely continuous path  $\gamma \colon [a,b] \to M$, we have $l({d(F)})(\gamma)=l_F(\gamma)$.
\end{lemma}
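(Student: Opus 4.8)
The plan is to mimic the proof of the Busemann--Mayer equality (\cref{B-M}) but to carry it out carefully at the level of partitions, exploiting the absolute continuity of $\gamma$ to replace the uniform $C^1$-control on $\dot\gamma$ by an $L^1$-integrability argument. Throughout, fix an auxiliary Riemannian metric $g$ on $M$ with associated norm $\Vert\cdot\Vert$ and distance $d$; by the local comparison \cref{comparison with d} (a consequence of \cref{comparison}), on a neighbourhood of any point $F$ and $\Vert\cdot\Vert$ are comparable up to a constant, so $\gamma$ being absolutely continuous with respect to $d$ is the same as being absolutely continuous with respect to $d(F)$, and in particular $\dot\gamma(t)$ exists for almost every $t$ and $t\mapsto F(\gamma(t),\dot\gamma(t))$ is integrable, so $l_F(\gamma)$ is well defined.

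First I would prove the inequality $l(d(F))(\gamma)\le l_F(\gamma)$. For any partition $a=t_0\le\dots\le t_{n+1}=b$ one has $d(F)(\gamma(t_i),\gamma(t_{i+1}))\le l_F(\gamma|_{[t_i,t_{i+1}]})$ because the restriction of $\gamma$ to the subinterval is itself an admissible (absolutely continuous) path joining the two points, and $d(F)$ is the infimum of $F$-lengths over such paths — here one uses that $d(F)$ can indeed be computed using absolutely continuous paths, which follows because piecewise $C^1$-paths are a subclass and conversely any absolutely continuous path can be uniformly approximated in length by piecewise $C^1$-paths (alternatively, invoke the references \cite{Burt,ZZ} for this density statement). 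Summing over $i$ and using additivity of $l_F$ over subintervals gives $\sum_i d(F)(\gamma(t_i),\gamma(t_{i+1}))\le l_F(\gamma)$; taking the supremum over partitions yields the claim.

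The reverse inequality $l(d(F))(\gamma)\ge l_F(\gamma)$ is the substantive part and the main obstacle. The idea is: by the definition of the derivative, for almost every $s$ the chord $\gamma(s+h)-\gamma(s)$ (in a chart around $\gamma(s)$) is well approximated by $h\,\dot\gamma(s)$ as $h\to 0$, and since $F(\gamma(s),\cdot)$ is a continuous weak norm and $F$ itself is continuous on $TM$, one gets $d(F)(\gamma(s),\gamma(s+h))\ge (1-\varepsilon)\,h\,F(\gamma(s),\dot\gamma(s))$ for $h$ small, uniformly in a sense made precise by an Egorov/Vitali-covering argument. Concretely I would fix $\varepsilon>0$, use Egorov's theorem to find a set $E\subseteq[a,b]$ of measure $>b-a-\varepsilon$ on which the difference quotients converge uniformly and on which $F(\gamma(\cdot),\dot\gamma(\cdot))$ is bounded, then apply the Vitali covering lemma to extract finitely many disjoint subintervals $[t_i,t_i+h_i]$ covering $E$ up to small measure, each satisfying $d(F)(\gamma(t_i),\gamma(t_i+h_i))\ge (1-\varepsilon)\int_{t_i}^{t_i+h_i}F(\gamma(s),\dot\gamma(s))\,ds$; the contribution of the complement of $\bigcup_i[t_i,t_i+h_i]$ to $\int_a^b F(\gamma(s),\dot\gamma(s))\,ds$ is small by absolute continuity of the integral. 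Refining these intervals into a partition of $[a,b]$, the corresponding partition sum for $l(d(F))(\gamma)$ is then at least $(1-\varepsilon)l_F(\gamma)$ minus an error tending to $0$; letting $\varepsilon\to 0$ finishes the proof. The delicate point to get right is the uniformity in the passage from ``$d(F)(\gamma(t_i),\gamma(t_i+h_i))$ close to $h_iF$'' — one must control simultaneously the chart distortion, the continuity modulus of $F$ near $\gamma(t_i)$, and the error in the first-order Taylor expansion of $\gamma$, which is exactly what restricting to the Egorov set $E$ and shrinking the $h_i$ uniformly buys us. I expect this uniformity bookkeeping to be the only real difficulty; everything else is the standard template of \cref{B-M} together with \cref{comparison with d}.
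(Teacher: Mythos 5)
Your argument is correct and is essentially the paper's own approach: the paper gives no written proof of this lemma, deferring to Busemann--Mayer \cite{BM} and to \cite{Burt,ZZ} with the remark that ``the same argument as in the proof of \cref{B-M} works'', and your partition argument for $l(d(F))(\gamma)\le l_F(\gamma)$ together with the Egorov--Vitali differentiation argument for the reverse inequality is exactly that argument written out. Your explicit handling of the density in length of piecewise $C^1$-paths among absolutely continuous ones (needed so that $d(F)$, defined as an infimum over piecewise $C^1$-paths, bounds the $F$-length of an absolutely continuous subarc without a circular appeal to \cref{longer}) is a point the paper leaves entirely to the references.
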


\begin{definition}[Geodesics and bi-geodesics]
	Let $F$ be a Finsler structure on a manifold $M$. 
	We say that a path $\gamma: [a,b]\to M$ is a geodesic (or minimal) with respect to  $F$ if it is continuous, rectifiable and $l(d(F))(\gamma)=d(F)(c(a),c(b))$.
	  We say that a path $\gamma: [a,b]\to M$ is a bi-geodesic (or bi-minimal) if it and its reverse are geodesics.
\end{definition}

As a corollary to \cref{two lengths}, we have the following.
\begin{corollary}
\label{longer}
For any absolutely continuous path $\gamma \colon [a,b] \to M$ we have $l_F(\gamma) \geq d(F)(\gamma(a), \gamma(b))$, and the equality holds if and only if $\gamma$ is a geodesic.
\end{corollary}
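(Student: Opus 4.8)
The plan is to deduce \cref{longer} directly from \cref{two lengths} together with the definition of the induced metric $d(F)$ and the definition of a geodesic. The key observation is that \cref{two lengths} identifies the two notions of length for an absolutely continuous path, so the content to be proved becomes a statement purely about the length structure $l(d(F))$ and the weak metric $\delta(l_F)=d(F)$ it induces.

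First I would establish the inequality $l_F(\gamma)\geq d(F)(\gamma(a),\gamma(b))$. By \cref{two lengths} we have $l_F(\gamma)=l(d(F))(\gamma)$, so it suffices to show $l(d(F))(\gamma)\geq d(F)(\gamma(a),\gamma(b))$. This follows immediately from the definition of $l(d(F))$ as a supremum over partitions (\cref{rectifiable}): taking the trivial partition $a=t_0\leq t_1=b$ gives $l(d(F))(\gamma)\geq d(F)(\gamma(a),\gamma(b))$. Alternatively, since $\gamma$ is one particular path joining $\gamma(a)$ to $\gamma(b)$ and $d(F)=\delta(l_F)$ is the infimum of $l_F$ over all such paths, we get $l_F(\gamma)\geq d(F)(\gamma(a),\gamma(b))$ directly. (One should note here that an absolutely continuous path need not be piecewise $C^1$, but it still lies in the semigroupoid of continuous paths on which $l(d(F))$ is defined, and \cref{two lengths} is exactly what is needed to compare $l_F(\gamma)$ with $l(d(F))(\gamma)$ in that generality.)

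Next I would handle the equality case. If $\gamma$ is a geodesic, then by the definition of geodesic we have $l(d(F))(\gamma)=d(F)(\gamma(a),\gamma(b))$, and combining with \cref{two lengths} gives $l_F(\gamma)=d(F)(\gamma(a),\gamma(b))$. Conversely, if $l_F(\gamma)=d(F)(\gamma(a),\gamma(b))$, then by \cref{two lengths} again $l(d(F))(\gamma)=d(F)(\gamma(a),\gamma(b))$; since $\gamma$ is absolutely continuous it is in particular continuous, and its length is finite and equal to the distance between its endpoints, so by definition it is rectifiable and a geodesic. This closes the equivalence.

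I do not expect any serious obstacle here: the proof is essentially an unwinding of definitions once \cref{two lengths} is in hand. The only point requiring a little care is making sure the definition of geodesic (stated for continuous rectifiable paths) applies verbatim to an absolutely continuous $\gamma$, which it does since absolute continuity implies continuity and the hypothesis $l_F(\gamma)=d(F)(\gamma(a),\gamma(b))<\infty$ forces rectifiability; so the "main obstacle", such as it is, is purely bookkeeping about which class of paths each cited statement applies to.
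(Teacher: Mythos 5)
Your proof is correct and follows the same route the paper intends: the paper states this result as an immediate consequence of \cref{two lengths}, and your argument simply unwinds that — the inequality from the trivial partition in \cref{rectifiable}, and the equality case from the definition of geodesic. Your parenthetical correctly flags the only real subtlety (that the "direct" infimum argument does not apply verbatim since $d(F)$ is an infimum over piecewise $C^1$ paths, so \cref{two lengths} is genuinely needed), and the bookkeeping about rectifiability in the converse is handled properly.
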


In the following section, we shall need to study the special case when any pair of points can be joined by an absolutely continuous geodesic.

\begin{definition}
We say that a Finsler structure on $M$ is {\em geodesic} if every two points on $M$ can be joined by an {\em absolutely continuous} geodesic.
\end{definition}

We next turn to considering the notion of completeness with respect to $d(F)$.
For that, we first need to define Cauchy sequences for weak metrics.

\begin{definition}[Cauchy sequences and completeness]
A sequence $(x_i)$ in a weak metric space $(X,\eta)$ is said to be a {\em Cauchy sequence} with respect $\eta$ if for any $\epsilon >0$, there exists $N$ such that for any $n\geq m \geq N$, we have $\eta(x_m, x_n) < \epsilon$.
The weak metric $\eta$ is said to be {\em complete} if every Cauchy sequence $(x_i)$ has a limit in $X$, \ie if $\eta(x_i, x) \to 0$ for some $x \in X$.
A Finsler structure $F$ on $M$ is said to be complete when the corresponding weak metric $d(F)$ is complete.
\end{definition}
These conditions are referred to as \emph{forward Cauchyness} and  \emph{forward completeness} respectively, whereas the condition obtained by replacing $n> m$ by $m>n$ is called \lq\lq backward Cauchyness''  and \lq\lq backward completeness'' in some literature (see \cite{CZ}).

\begin{lemma}
\label{hereditary}
Let $F_1$ and $F_2$ be two complete Finsler structures on $M$.
Then both $F_1+F_2$ and $\max\{F_1, F_2\}$ are also complete.
\end{lemma}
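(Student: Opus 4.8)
The plan is to observe first that both $G := F_1 + F_2$ and $G' := \max\{F_1, F_2\}$ are again Finsler structures in the sense of \cref{def:weak-Finsler}: a sum and a pointwise maximum of two continuous families of weak norms is again a continuous family of weak norms (for the maximum, the triangle inequality follows from $\max\{a_1 + b_1,\, a_2 + b_2\} \le \max\{a_1, a_2\} + \max\{b_1, b_2\}$, while positive homogeneity and positivity are immediate, and continuity is clear in both cases). In particular \cref{Finsler Busemann} applies verbatim to $G$ and to $G'$, so that for each of these structures convergence of a sequence with respect to the associated weak metric is equivalent to convergence in the manifold topology of $M$.

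Next I would record the elementary comparison of the distance functions. For any piecewise $C^1$-path $\alpha$ joining $x$ to $y$ in $M$, one has $l_G(\alpha) = l_{F_1}(\alpha) + l_{F_2}(\alpha) \ge l_{F_1}(\alpha) \ge d(F_1)(x,y)$, because $F_2 \ge 0$; taking the infimum over all such $\alpha$ gives $d(G)(x,y) \ge d(F_1)(x,y)$. Likewise $G' \ge F_1$ pointwise on $TM$, so $l_{G'}(\alpha) \ge l_{F_1}(\alpha) \ge d(F_1)(x,y)$ and hence $d(G')(x,y) \ge d(F_1)(x,y)$. (By symmetry one also obtains $d(G), d(G') \ge d(F_2)$, but only the comparison with $d(F_1)$ will be used.)

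Now let $H$ denote either $G$ or $G'$, and let $(x_i)$ be a Cauchy sequence with respect to $d(H)$. For $n \ge m \ge N$ we have $d(F_1)(x_m, x_n) \le d(H)(x_m, x_n)$, so $(x_i)$ is a Cauchy sequence with respect to $d(F_1)$; by completeness of $F_1$ there is a point $x \in M$ with $d(F_1)(x_i, x) \to 0$. Applying \cref{Finsler Busemann} to $F_1$, this says that $(x_i)$ converges to $x$ in the topology of $M$. Applying \cref{Finsler Busemann} again, this time to $H$ (legitimate by the first paragraph), convergence in the manifold topology gives $d(H)(x_i, x) \to 0$, that is, $(x_i)$ has a limit with respect to $d(H)$. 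Hence $H$ is complete, proving the lemma.

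There is no genuine obstacle here: the whole argument rests on \cref{Finsler Busemann}, which lets us pass freely between $d(F_1)$-convergence, convergence in $M$, and $d(H)$-convergence. The only point requiring a moment's care is that forward-Cauchyness is inherited in the correct direction under the inequality $d(H) \ge d(F_1)$ — which it is, since that inequality bounds $d(F_1)(x_m,x_n)$ by $d(H)(x_m,x_n)$ for $n \ge m$.
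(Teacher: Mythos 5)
Your proof is correct and follows essentially the same route as the paper's: compare the induced distances to transfer forward-Cauchyness to a factor, use its completeness, and then invoke \cref{Finsler Busemann} to pass through the manifold topology back to the combined structure. The only (harmless) differences are that you use completeness of $F_1$ alone where the paper uses both $F_1$ and $F_2$, and that you make explicit the final application of \cref{Finsler Busemann} to $F_1+F_2$ and $\max\{F_1,F_2\}$, a step the paper leaves implicit.
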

\begin{proof}
Let $(x_i)$ be a (forward) Cauchy sequence for $d(F_1+F_2)$.
Since $d(F_k)(x_m, x_n) \leq d(F_1+F_2)(x_m, x_n)$ for $k=1,2$, the sequence $(x_i)$ is a Cauchy sequence for both $d(F_1)$ and $d(F_2)$.
Since we assumed that both $F_1$ and $F_2$ are complete, $(x_i)$ converges  with respect to both $d(F_1)$ and $d(F_2)$.
By \cref{Finsler Busemann}, the limit for $d(F_1)$ and that for $d(F_2)$ are both the limit with respect to the topology of $M$.
Therefore they must coincide. We denote their limit by $y$.
The conditions  $d(F_1)(x_i, y) \to 0$ and $d(F_2)(x_i, y) \to 0$ imply that $d(F_1+F_2)(x_i, y) \to 0$, and we see that $(x_i)$ is convergent with respect to $d(F_1+F_2)$, which means that $F_1+F_2$ is complete.
The same argument works also for $\max\{F_1,F_2\}$.
\end{proof}
%

By virtue of \cref{Finsler Busemann}, an analogue of  the Arzel\`{a}--Ascoli theorem (\cref{Arzela-Ascoli}) holds, as was shown in Theorem 5.12 in \cite{CZ}.
As a consequence, we have the following \cref{Hopf-Rinow}, which is an analogue of Hopf--Rinow's theorem in our setting.

\begin{proposition}
\label{Hopf-Rinow}
Suppose that $F$ is a complete Finsler structure on $M$.
Then for any $x, y \in M$, there is a geodesic with respect to $F$ connecting $x$ to $y$.
\end{proposition}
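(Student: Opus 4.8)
The plan is to derive \cref{Hopf-Rinow} as a consequence of the Arzelà--Ascoli-type theorem referred to just before the statement (\cref{Arzela-Ascoli}, from Theorem 5.12 in \cite{CZ}), combined with the lower semicontinuity of the length functional $l(d(F))$ under uniform convergence of paths. The standard scheme is: first I would fix $x,y\in M$ and argue that $d(F)(x,y)<\infty$, so that the infimum defining $d(F)(x,y)$ is taken over a nonempty family of rectifiable paths; this uses that $M$ is connected and that, locally, the Finsler length is comparable to a background Riemannian length by \cref{comparison with d}, so short paths exist locally and can be concatenated. Then I would take a minimising sequence $(\gamma_n)$ of piecewise $C^1$-paths from $x$ to $y$ with $l_F(\gamma_n)=l(d(F))(\gamma_n)\to d(F)(x,y)$ (the two notions of length agreeing by \cref{B-M}), reparametrise each $\gamma_n$ proportionally to arclength on a common interval $[0,1]$ so that the family becomes uniformly Lipschitz, hence equicontinuous, with respect to a fixed Riemannian metric $g$ (here the comparison \cref{comparison with d} between $F$ and $\|\cdot\|_g$ is what converts an $F$-length bound into a $g$-Lipschitz bound).

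Next I would invoke completeness: since $F$ is complete, $d(F)$ is complete, and by \cref{Finsler Busemann} the topology of $d(F)$ agrees with that of $M$ and with that of the symmetrisation. A bounded-length minimising sequence stays in a bounded, hence (by completeness plus the Arzelà--Ascoli argument of \cite{CZ}, \cref{Arzela-Ascoli}) relatively compact, subset of the path space; so after passing to a subsequence, $\gamma_n$ converges uniformly to a continuous path $\gamma\colon[0,1]\to M$ from $x$ to $y$. Finally I would use lower semicontinuity of the length: $l(d(F))(\gamma)\le\liminf_n l(d(F))(\gamma_n)=d(F)(x,y)$, which follows directly from the definition of $l(d(F))$ as a supremum of finite sums $\sum d(F)(\gamma(t_i),\gamma(t_{i+1}))$ together with continuity of $d(F)$ in each variable (again guaranteed by \cref{Finsler Busemann}). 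Since also $l(d(F))(\gamma)\ge d(F)(x,y)$ always, equality holds, so $\gamma$ is a geodesic from $x$ to $y$.

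The main obstacle I anticipate is the compactness step, i.e.\ ensuring that the minimising sequence, once reparametrised, genuinely lands in a relatively compact family of paths. This is exactly where one must be careful in the non-symmetric setting: one needs a version of Arzelà--Ascoli valid for weak metrics, and one needs the minimising paths to remain in a fixed compact region of $M$. The latter is not automatic from an $F$-length bound alone unless forward completeness is used correctly — a forward-Cauchy subsequence of points along the paths must be controlled, and one must check the limit point is reached rather than ``escaping to infinity''. The cleanest way is to quote \cref{Arzela-Ascoli} (the cited Theorem 5.12 of \cite{CZ}) as a black box, as the text already signals; the remaining work is then the routine reparametrisation and lower-semicontinuity bookkeeping, which I would not spell out in full. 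A secondary point worth stating explicitly is that the limiting path $\gamma$ obtained this way is merely continuous and rectifiable, not necessarily piecewise $C^1$; but the definition of geodesic in this paper (\cref{def:geodesic} and the subsequent ``Geodesics and bi-geodesics'' definition) only requires $l(d(F))(\gamma)=d(F)(\gamma(a),\gamma(b))$, so this is exactly what is needed and no further regularity must be proved here.
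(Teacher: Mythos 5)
Your proposal follows essentially the same route as the paper's own proof: take a minimising sequence of piecewise $C^1$-paths parametrised proportionally to arc-length, verify the hypotheses of the asymmetric Arzel\`a--Ascoli theorem (\cref{Arzela-Ascoli}) to extract a uniformly convergent subsequence, and conclude by lower semicontinuity of $l(d(F))$ together with the trivial reverse inequality. The only cosmetic difference is that the paper checks the equiboundedness hypothesis of \cref{Arzela-Ascoli} via the triangle inequality through $x$ and $y$ (giving the bound $2L_0+d(F)(y,x)$) rather than via a background Riemannian Lipschitz bound, but the substance and the key lemmas invoked are the same.
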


Before starting the proof, we need to introduce the notion of forward/ backward uniform convergence.

\begin{definition}[Uniform convergence]
Let $(c_i \colon [a,b] \to M)$ be a sequence of continuous paths.
We say that $(c_i)$ forward (resp. backward) uniformly converges to a continuous path $c\colon [a,b] \to M$ if  for any $\epsilon$, there is $i_0$ such that the inequality $d_F(c_i(t), c(t))<\epsilon$ (resp.\ $d_F(c(t), c_i(t))< \epsilon$) holds for every $i \geq i_0$ and $t \in [a,b]$.
\end{definition}

\cref{Finsler Busemann} implies the following.

\begin{lemma}
\label{both uniform}
A sequence $(c_i \colon [a,b]\to M)$ as above forward uniformly converges to a continuous path $c\colon [a,b]\to M$ if and only if it backward uniformly converges to $c$.
\end{lemma}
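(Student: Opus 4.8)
The plan is to reduce the statement to \cref{Finsler Busemann}, exactly as the sentence preceding the lemma announces. Fix a sequence of continuous paths $(c_i\colon [a,b]\to M)$ and a continuous path $c\colon[a,b]\to M$. Suppose first that $(c_i)$ forward uniformly converges to $c$; I want to show it backward uniformly converges. Fix $\epsilon>0$. For each $t\in[a,b]$, forward uniform convergence says $d(F)(c_i(t),c(t))\to 0$ as $i\to\infty$, and by \cref{Finsler Busemann} this is equivalent to $d(F)(c(t),c_i(t))\to 0$; but that pointwise statement is not enough, since I need a single index $i_0$ that works for all $t$ simultaneously. So the real content is a \emph{uniform} version of the equivalence in \cref{Finsler Busemann}, and that is where the work lies.

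To get the uniformity, I would run the proof of \cref{Finsler Busemann} with parameters controlled locally uniformly. Fix a Riemannian metric $g$ on $M$ with distance $d$ and norm $\Vert\cdot\Vert$. The image $c([a,b])$ is compact, so it is covered by finitely many of the neighbourhoods $U$ appearing in \cref{Finsler Busemann}, on each of which the two-sided comparison $K^{-1}\Vert v\Vert\le F(y,v)\le K\Vert v\Vert$ holds; taking the maximum of the finitely many constants, we get a single constant $K\ge 1$ and a single $\rho>0$ such that $K^{-1}\Vert v\Vert\le F(y,v)\le K\Vert v\Vert$ for every $y$ within $d$-distance $\rho$ of $c([a,b])$. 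From the estimate in the proof of \cref{Finsler Busemann}, for points $p,q$ both lying within $\rho/2$ of $c([a,b])$ and at mutual $d$-distance less than $\rho/2$ we get $d(F)(p,q)\le K\,d(p,q)$ and, symmetrically, $d(F)(q,p)\le K\,d(p,q)$; and conversely a forward- (or backward-) small $d(F)$ distance between such points forces a small $d$-distance. This upgrades forward uniform $d(F)$-convergence to uniform $d$-convergence (for $i$ large, $c_i(t)$ lies in the $\rho/2$-tube so the local comparison applies), and uniform $d$-convergence then yields backward uniform $d(F)$-convergence by the same inequalities with the roles swapped. The converse direction — backward uniform convergence implies forward — is identical with $p$ and $q$ interchanged throughout.

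An alternative, slightly slicker route avoids re-deriving the estimates: define $\phi(r)=\sup\{d(F)(p,q): p\in c([a,b]),\ d(p,q)\le r,\ q\in M\}$ and $\psi(r)=\sup\{d(F)(q,p):\ \text{same constraints}\}$ for small $r$. Using the compactness of $c([a,b])$ and the local comparisons above, both $\phi$ and $\psi$ tend to $0$ as $r\to 0^+$, and likewise $d(p,q)\to 0$ whenever $d(F)(p,q)\to 0$ or $d(F)(q,p)\to 0$ uniformly over $p\in c([a,b])$. Then forward uniform convergence $d(F)(c_i(t),c(t))<\epsilon$ for all $t$ gives, for $i$ large, $d(c_i(t),c(t))$ uniformly small, hence $d(F)(c(t),c_i(t))\le\psi(d(c_i(t),c(t)))$ uniformly small, which is backward uniform convergence; and vice versa.

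\textbf{Main obstacle.} The only genuine subtlety is that \cref{Finsler Busemann} is a statement about a single point $x$, whereas here the ``base point'' $c(t)$ moves over a whole curve; the fix is to exploit compactness of $c([a,b])$ to make the constants $K$ and the tube radius $\rho$ uniform along $c$, and everything else is a routine repackaging of the inequalities already established in the proof of \cref{Finsler Busemann}. I would also make sure to note explicitly that for $i$ sufficiently large the points $c_i(t)$ all lie in the $\rho/2$-neighbourhood of $c([a,b])$, so that the uniform local comparison is legitimately applicable; this is where forward (resp.\ backward) uniform convergence of $(c_i)$ is used at the outset, and it is harmless because one only needs the conclusion for large $i$.
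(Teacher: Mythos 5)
Your proposal is correct, but it takes a genuinely different route from the paper. The paper's proof never re-opens the estimates behind \cref{Finsler Busemann}: it picks, for each $i$, a time $t_i$ at which $d(F)(c(t_i),c_i(t_i))$ attains its maximum over $[a,b]$ (using continuity of $c$, $c_i$ and of $d(F)$), passes to a subsequence with $t_i\to t_\infty$, and then applies the \emph{statement} of \cref{Finsler Busemann} at the single point $c(t_\infty)$ to the sequence $(c_i(t_i))$ — forward uniform convergence plus the triangle inequality give $d(F)(c_i(t_i),c(t_\infty))\to 0$, the proposition flips this to $d(F)(c(t_\infty),c_i(t_i))\to 0$, and one more triangle inequality finishes. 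You instead make the local comparison $K^{-1}\Vert v\Vert\le F(y,v)\le K\Vert v\Vert$ uniform along the compact image $c([a,b])$ via a finite subcover, and deduce a two-sided quantitative equivalence between forward $d(F)$, backward $d(F)$, and the auxiliary Riemannian distance on a tube around the curve. Both arguments are sound; yours costs a re-derivation of the estimates with uniform constants (and some bookkeeping with $\rho$ versus $\rho/2$, since a near-minimizing path between two points of the tube can wander slightly outside it — harmless, but worth adjusting the radii explicitly), while it buys a stronger, quantitative conclusion (the moduli $\phi,\psi$), whereas the paper's subsequence-at-the-maximizing-time trick is softer and shorter and uses the earlier proposition purely as a black box.
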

\begin{proof}
We shall prove that forward uniform convergence of $(c_i)$ to $c$ implies the backward uniform convergence.
The other way round can be proved in exactly the same way.
Since both $c_i$ and $c$ are continuous with respect to  the topology of $M$  (\cref{Finsler Busemann}), and $d(F)$ is continuous on $M$, then for any $i$, there is $t_i \in [a,b]$ such that $d(F)(c(t_i), c_i(t_i))$ attains the $\max_{t \in [a,b]}\{d(F)(c(t), c_i(t))\}$.
We only need to show that $d(F)(c(t_i), c_i(t_i)) \to 0$ as $i \to \infty$.
Since $(c_i)$ forward uniformly converges to $c$, we see that $d(F)(c_i(t_i), c(t_i)) \to 0$ as $i \to \infty$.
It suffices to prove that under this condition, there is always a subsequence such that $d(F)(c(t_i), c_i(t_i)) \to 0$ as $i \to \infty$.

Passing to a subsequence, we can assume that the sequence $(t_i)$ converges to $t_\infty$.
Then, by the continuity of $c$,  we have  $d(F)(c(t_i) c(t_\infty)) \to 0$, and we also have $d(F)(c_i(t_i), c(t_\infty))\to 0$ by our assumption of  forward uniform convergence and the triangle inequality.
Now applying \cref{Finsler Busemann}, we have $d(F)(c(t_\infty), c_i(t_i)) \to 0$, and by the triangle inequality, $d(F)(c(t_i), c_i(t_i)) \to 0$.
\end{proof}

We now state the Arezel\`{a}--Ascoli theorem in a form adapted to our purpose.
(The parametrisation is proportional to arc-length and the length bound imply the (forward) equicontinuity used in \cite{CZ}.)

\begin{proposition}
\label{Arzela-Ascoli}
Let $F$ be a complete Finsler structure on $M$.
Let $(\alpha_i \colon [a, b] \to M)$ be a sequence of piecewise $C^1$-paths parametrised proportionally to arc-length such that for any $m \leq n$  and for all $t \in [a,b]$, there exists a constant $K$ with $d(F)(\alpha_m(t), \alpha_n(t))\leq K$,  and  for all $i$, $l_F(\alpha_i) \leq L$.
Then, up to passing to a subsequence, $(\alpha_i)$ converges uniformly to a continuous arc.
\end{proposition}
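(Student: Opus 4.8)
The plan is to mimic the classical proof of the Arzel\`a--Ascoli theorem, adapted to the weak-metric setting, using \cref{Finsler Busemann} to move freely between forward and backward convergence and using \cref{both uniform} at the end to upgrade pointwise convergence on a countable dense set to uniform convergence on all of $[a,b]$.

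\textbf{Step 1: Setting up a background symmetric metric and relative compactness.} First I would fix an auxiliary complete Riemannian metric $g$ on $M$ (or use the arithmetic symmetrisation $d(F)_{1/2}^a$), with associated symmetric distance $d$. Using \cref{comparison} and the local comparison \cref{comparison with d}, the hypotheses ``$d(F)(\alpha_m(t),\alpha_n(t)) \le K$ for all $m\le n$'' and ``$l_F(\alpha_i) \le L$'' translate into the statement that all the arcs $\alpha_i$ stay inside a fixed bounded region $B$ for $d$. Since the metric $d(F)_{1/2}^a$ is complete (by \cref{hereditary}, $F_t^a$ is complete) and its closed bounded sets are compact by the Hopf--Rinow-type result \cref{Hopf-Rinow} together with standard arguments, the closure $\overline{B}$ is compact in the topology of $M$. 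Thus each sequence $(\alpha_i(t))_i$ has a convergent subsequence.

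\textbf{Step 2: Equicontinuity and the diagonal argument.} Because each $\alpha_i$ is parametrised proportionally to arc-length and $l_F(\alpha_i) \le L$, for any $s<t$ in $[a,b]$ we have $l_F(\alpha_i|_{[s,t]}) = \frac{t-s}{b-a}\, l_F(\alpha_i) \le \frac{t-s}{b-a}L$, hence $d(F)(\alpha_i(s),\alpha_i(t)) \le \frac{L}{b-a}(t-s)$ by \cref{longer}, and symmetrically $d(F)(\alpha_i(t),\alpha_i(s)) \le \frac{L}{b-a}(t-s)$; converting via \cref{comparison with d} on the compact set $\overline{B}$, the family $(\alpha_i)$ is (uniformly Lipschitz, hence) equicontinuous for the symmetric metric $d$. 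Now take a countable dense subset $\{q_1,q_2,\dots\} \subset [a,b]$ containing $a$ and $b$, and run the usual diagonal extraction: pass to nested subsequences so that $(\alpha_i(q_k))_i$ converges in $\overline{B}$ for every $k$, and then take the diagonal subsequence, which I continue to denote $(\alpha_i)$. Define $c(q_k) := \lim_i \alpha_i(q_k)$ on the dense set.

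\textbf{Step 3: Uniform convergence and continuity of the limit.} The standard $\epsilon/3$ argument, using equicontinuity to control $d(\alpha_i(t),\alpha_i(q_k))$ and $d(c(q_k),c(t))$ for $q_k$ close to $t$ and the pointwise convergence at the finitely many relevant $q_k$'s, shows that $(\alpha_i(t))_i$ is Cauchy for $d$, uniformly in $t$; hence it converges to some $c(t)$, and the convergence $\alpha_i \to c$ is uniform for the symmetric metric $d$, with $c$ continuous. Converting back: uniform $d$-convergence on the compact set $\overline{B}$ gives, again by \cref{comparison with d}, forward uniform convergence $d(F)(\alpha_i(t),c(t)) \to 0$ uniformly in $t$, which by \cref{both uniform} is equivalent to backward uniform convergence, so $(\alpha_i)$ converges uniformly to the continuous arc $c$ in the required sense.

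\textbf{Main obstacle.} The delicate point is not the diagonal argument itself but the passage between the weak metric $d(F)$ and a symmetric controlling metric: one must verify that the hypotheses genuinely confine the arcs to a set that is compact (this uses completeness of $F$, hence of $F_{1/2}^a$, plus \cref{Hopf-Rinow}), and that the Lipschitz/equicontinuity estimates can be transferred in both directions across \cref{comparison with d}, whose constant $K$ is only local; covering $\overline{B}$ by finitely many such neighbourhoods handles this. Once on a genuine (symmetric, compact) metric space the classical proof goes through verbatim, and \cref{both uniform} is exactly what lets us re-express the conclusion in the asymmetric language of the statement.
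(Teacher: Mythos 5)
The paper does not actually prove this proposition: it is imported from Theorem 5.12 of \cite{CZ}, with only the parenthetical remark that the arc-length parametrisation together with the bound $l_F(\alpha_i)\leq L$ yields the forward equicontinuity required there. Your proposal is therefore a genuinely different, self-contained route, and its overall architecture (diagonal extraction over a countable dense set, then \cref{both uniform} to translate back into the asymmetric language) is the right one; but it has one minor error and one serious gap.

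The minor error: the bound $d(F)(\alpha_i(t),\alpha_i(s))\leq \frac{L}{b-a}(t-s)$ does \emph{not} follow ``symmetrically''. The reverse of $\alpha_i|_{[s,t]}$ has $F$-length $\int F(\alpha_i,-\dot\alpha_i)$, which is not controlled by $L$ when $F$ is asymmetric. This happens not to matter, because the two-sided norm comparison \cref{comparison with d} converts the forward bound $l_F(\alpha_i|_{[s,t]})\leq \frac{L}{b-a}(t-s)$ directly into a Lipschitz estimate for the symmetric Riemannian distance, which is all the equicontinuity you need --- but the sentence as written is false and should be removed. The serious gap is Step 1. The crux of the whole statement is the pointwise relative compactness, i.e.\ that the forward-bounded region containing all the points $\alpha_i(t)$ has compact closure. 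You derive this from ``\cref{Hopf-Rinow} together with standard arguments'', but (i) \cref{Hopf-Rinow} as stated only asserts the existence of geodesics, not that closed forward-bounded sets are compact, and (ii) in the paper \cref{Hopf-Rinow} is itself proved by applying \cref{Arzela-Ascoli}, so invoking it here is circular. To close the gap you must establish independently the Hopf--Rinow--Cohn-Vossen--type fact that forward completeness of $F$ forces closed forward-bounded subsets of $M$ to be compact (for instance by an exhaustion argument comparing $F$ locally with a complete Riemannian metric, in the spirit of the proof of \cref{Finsler Busemann}), or else fall back on the hypotheses of Theorem 5.12 of \cite{CZ} as the paper does. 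Once that compactness is supplied, the rest of your argument goes through.
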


\begin{proof}[Proof of \cref{Hopf-Rinow}]
By the definition of $d(F)$, there exists a sequence of piecewise $C^1$-curves $(\alpha_i\colon [a,b] \to M)$  connecting $x$ to $y$ such that $d(F)(x,y)=\lim_{i\to \infty}l_F(\alpha_i)$.
Since their lengths converge, there is a constant $L_0$ such that $l_F(\alpha_i) \leq L_0$ for all $i$.
We parametrise the $\alpha_i$ proportionally to arc length.
For $t \in [a,b]$ and $j \geq i$, we have $d(F)(\alpha_i(t), \alpha_j(t)) \leq d(F)(\alpha_i(t), y)+d(F)(y,x)+d(F)(x, \alpha_j(t)) \leq 2L_0+d(F)(y,x)$.
Therefore we can apply \cref{Arzela-Ascoli}. Passing to a subsequence,  $(\alpha_i)$ converges to a continuous arc $\alpha \colon [a,b] \to M$ uniformly.

What remains to show is that $\alpha$ is rectifiable and $l({d(F)})(\alpha) = d(F)(x,y)$.
The rectifiability and the inequality $l(d(F))(\alpha) \leq d(F)(x,y)$  follow from the lower semi-continuity of the length function in the case when $F$ is symmetric, and we can apply the same argument by virtue of \cref{both uniform}.
She other inequality $l({d(F)})(\alpha) \geq d(F)(x,y)$ is an immediate consequence of the definition of length. Thus, we have $l({d(F)})(\alpha)=d(F)(x,y)$, which means that $\alpha$ is a geodesic from $x$ to $y$.
\end{proof}

\begin{example}
A geodesic in a Finsler manifold is not necessarily piecewise $C^1$, as can be seen in the case when $M=\reals^2$ and $F(x,(a,b))=|a|+|b|$.	
\end{example}

Although \cref{Hopf-Rinow} is an analogue of Hopf--Rinow's theorem, we do not know whether geodesics we obtained by the proposition are differentiable or even absolutely continuous.
The followibg stronger result is known for $C^\infty$-Finsler structures. 

\begin{proposition}[Theorem 6.6.1 in \cite{BCS}]
\label{smooth Hopf-Rinow}
Suppose that $F$ is a  complete $C^\infty$-Finsler structure on $M$.
Then any two points on $M$ can be joined by a $C^\infty$-geodesic.
\end{proposition}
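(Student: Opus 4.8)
The plan is to start from \cref{Hopf-Rinow}, which already produces a continuous rectifiable \emph{minimizing} path from $x$ to $y$, and to upgrade it to a $C^\infty$ geodesic by feeding in the calculus of variations available for $C^\infty$-Finsler structures. The inputs I would invoke, all standard for smooth $F$ (see \cite{BCS}): the geodesics of a $C^\infty$-Finsler structure, regarded as critical curves of the energy functional with Lagrangian $\tfrac12 F^2$, are the integral curves of the \emph{geodesic spray}, a $C^\infty$ vector field on $TM\setminus\{0\}$; hence such geodesics are $C^\infty$ and depend smoothly on initial data. Consequently, for each $x$ the exponential map $\exp_x$ is defined on a neighbourhood of $0\in T_xM$, is $C^\infty$ away from $0$, and restricts to a diffeomorphism on a small punctured ball; moreover the Finsler Gauss lemma guarantees that inside a sufficiently small forward metric ball $B=\exp_x(\{|u|<\rho\})$ every point $z$ satisfies $d(F)(x,z)=|\exp_x^{-1}(z)|$ and is joined to $x$ by the radial geodesic as the \emph{unique} minimizing curve up to reparametrization.

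\textbf{The upgrade.} Let $\gamma\colon[0,L]\to M$ be the minimizing path from \cref{Hopf-Rinow}, reparametrized by $F$-arc length, so that $d(F)(\gamma(s),\gamma(t))=t-s$ for all $0\le s\le t\le L$ (restrictions of minimizers are minimizers). Fix $a\in[0,L)$ and choose $\rho>0$ small enough that $\exp_{\gamma(a)}$ is a $C^\infty$ diffeomorphism from a ball of radius $>\rho$ onto a neighbourhood $B$ of $\gamma(a)$ in which the Gauss lemma applies. For $t>a$ close enough to $a$ we have $\gamma(t)\in B$, and $\gamma|_{[a,t]}$ is minimizing of length $t-a=d(F)(\gamma(a),\gamma(t))$; by the uniqueness clause of the Gauss lemma, $\gamma|_{[a,t]}$ coincides as a point set with the radial geodesic $s\mapsto\exp_{\gamma(a)}(sw)$ for the appropriate unit $w$, and since both are unit-speed with the same endpoints they are literally equal. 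Hence $\gamma$ is $C^\infty$ on $[a,t]$ and solves the geodesic equation there; since $a$ was arbitrary, $\gamma$ is a $C^\infty$ geodesic from $x$ to $y$, as required.

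\textbf{A self-contained alternative.} One can instead reprove the conclusion directly by the classical Hopf--Rinow argument in the forward-complete setting: first show forward geodesic completeness (any unit-speed geodesic $[0,\ell)\to M$ extends, because for $t_i\nearrow\ell$ the points $\gamma(t_i)$ form a forward Cauchy sequence, hence converge, and local ODE solvability with continuous dependence on data lets one continue past $\ell$), so that $\exp_x$ is entire; then, with $r:=d(F)(x,y)$, minimize $d(F)(\cdot,y)$ over a small forward geodesic sphere $S^+_\rho(x)$ to obtain $p$ with $d(F)(p,y)=r-\rho$ (triangle inequality, using that every path from $x$ to $y$ meets the sphere), let $\gamma$ be the $C^\infty$ radial geodesic through $p$, and prove $T:=\sup\{t\in[\rho,r]: d(F)(\gamma(t),y)=r-t\}$ equals $r$ by an open--closed argument, where at the parameter $T$ one again uses uniqueness of $C^\infty$ geodesics with prescribed initial velocity to rule out a corner.

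\textbf{Main obstacle.} In either route the crux is the same: establishing, in the asymmetric setting, the Finsler Gauss lemma and the ensuing fact that a locally minimizing arc-length curve is a $C^\infty$ integral curve of the geodesic spray. This is exactly where smoothness of $F$ is indispensable --- it fails for merely continuous Finsler structures, as the $\ell^1$-example given above already shows --- and where one must keep careful track of the distinction between forward and backward metric balls. This package of facts is the content of \cite{BCS}, Chapters 5--6, culminating in their Theorem 6.6.1, which is the statement to be proved.
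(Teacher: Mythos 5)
The paper does not prove this proposition: it is imported verbatim as Theorem 6.6.1 of \cite{BCS}, so there is no in-paper argument to compare against. Your outline is correct modulo the cited machinery and is essentially the cited proof --- the ``self-contained alternative'' is the classical forward-complete Hopf--Rinow argument as given in \cite{BCS}, Chapters 5--6, while your first route is a sound variant that upgrades the continuous minimizer of \cref{Hopf-Rinow} via the exponential map and the (forward) Gauss lemma.
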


Next, we turn to considering weighted sums and weighted max of either two arbitrary Finsler structures or a Finsler structure and its reverse.
Let  $F_1$ and $F_2$ be two Finsler structures on a manifold $M$. We are interested in  the following two families of Finsler structures obtained from $F_1$ and $F_2$, defined for $t\in [0,1]$: 
\begin{equation}
\label{two Finsler}
((1-t)F_1+tF_2)(x,v)=(1-t)F_1(x,v)+t F_2(x,v),
\end{equation}
\begin{equation}
\label{two Finsler max}
(\max\{(1-t)F_1,tF_2\})(x,v)=\max\{(1-t)F_1(x,v),t F_2(x,v)\}.
\end{equation} 

Concerning the first family, we have the following result:

\begin{proposition}
\label{crucial}
Let $F_1$ and $F_2$ be two Finsler structures on $M$.
Then for each absolutely continuous path
$\gamma: [0,1]\to M$, the following equality holds:
$$[(1-t)l_{F_1}+tl_{F_2}](\gamma)=l_{((1-t)F_1+t F_2)}(\gamma).$$
\end{proposition}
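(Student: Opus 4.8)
The statement is essentially the additivity of the integral, applied pointwise along the path, but since $\gamma$ is only absolutely continuous rather than piecewise $C^1$ some care is needed about where derivatives exist. The plan is to work directly from the integral formula \eqref{eq:int} defining $l_F$ on absolutely continuous paths (recalled just before \cref{two lengths}), which is valid because an absolutely continuous path has a derivative almost everywhere.

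First I would note that, since $\gamma$ is absolutely continuous, there is a full-measure set $E \subseteq [0,1]$ on which $\dot\gamma(s)$ exists; on $E$ the functions $s \mapsto F_1(\gamma(s),\dot\gamma(s))$ and $s \mapsto F_2(\gamma(s),\dot\gamma(s))$ are defined and measurable (measurability follows from continuity of $F_1, F_2$ on $TM$ and measurability of $s \mapsto (\gamma(s),\dot\gamma(s))$), and each is integrable precisely when the corresponding length is finite. The key pointwise identity is that, for $\lambda \geq 0$, the function $(1-t)F_1 + tF_2$ evaluated at $(\gamma(s),\dot\gamma(s))$ equals $(1-t)F_1(\gamma(s),\dot\gamma(s)) + tF_2(\gamma(s),\dot\gamma(s))$ — this is immediate from the definition \eqref{two Finsler} of the Finsler structure $(1-t)F_1 + tF_2$, holding identically, not just a.e.

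Then I would integrate this identity over $[0,1]$ and invoke linearity of the Lebesgue integral:
\begin{align*}
l_{((1-t)F_1 + tF_2)}(\gamma) &= \int_0^1 \big((1-t)F_1 + tF_2\big)(\gamma(s),\dot\gamma(s))\, ds \\
&= \int_0^1 \big[(1-t)F_1(\gamma(s),\dot\gamma(s)) + tF_2(\gamma(s),\dot\gamma(s))\big]\, ds \\
&= (1-t)\int_0^1 F_1(\gamma(s),\dot\gamma(s))\, ds + t\int_0^1 F_2(\gamma(s),\dot\gamma(s))\, ds \\
&= (1-t)\, l_{F_1}(\gamma) + t\, l_{F_2}(\gamma) = [(1-t)l_{F_1} + tl_{F_2}](\gamma),
\end{align*}
where the splitting of the integral is justified separately in the case where one (or both) of $l_{F_1}(\gamma), l_{F_2}(\gamma)$ is $+\infty$: in that case the corresponding integrand is non-negative and non-integrable, so both sides are $+\infty$ (using $t, 1-t \geq 0$ and, when $t \in \{0,1\}$, that the vanishing coefficient kills the infinite term, in which case only the surviving term matters and the equality still holds).

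The only mild subtlety — and the closest thing to an obstacle — is purely bookkeeping about the extended-real-valued case and the boundary parameters $t = 0$ and $t = 1$; once one agrees to the conventions $0 \cdot \infty = 0$ and $c + \infty = \infty$ for $c \in [0,\infty)$, which are the natural ones for weak length structures taking values in $[0,\infty]$, there is nothing deep here. I would state these conventions explicitly at the start of the argument and then the proof is a two-line application of linearity of the integral.
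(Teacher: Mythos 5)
Your proof is correct and follows essentially the same route as the paper's: both reduce the claim to the pointwise identity $((1-t)F_1+tF_2)(\gamma(s),\dot\gamma(s))=(1-t)F_1(\gamma(s),\dot\gamma(s))+tF_2(\gamma(s),\dot\gamma(s))$ and then apply linearity of the integral. The extra care you take with measurability, almost-everywhere differentiability, and the extended-real-valued conventions is sound but not something the paper spells out.
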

\begin{proof}
We have

\begin{align*}
l_{((1-t)F_1+tF_2)}(\gamma)&=\int_0^1 [(1-t)F_1(\gamma(s),\dot{\gamma}(s))+tF_2(\gamma(s),\dot{\gamma}(s))] \ ds\\
&=(1-t)\int_0^1F_1(\gamma(s),\dot{\gamma}(s)) \ ds+t\int_0^1F_2(\gamma(s),\dot{\gamma}(s))  ds\\ &=[(1-t)l_{F_1}+tl_{F_2}](\gamma).
\end{align*}
\end{proof}
%

As a special case, for a given  Finsler structure, we shak
 consider  two associated families of Finsler structures obtained from it, which we call weighted Finsler structures.

With a Finsler structure $F$ on a manifold $M$, we associate the following two families of  Finsler structures, defined for $t\in [0,1]$:
\begin{equation}\label{eq:A}
F_t^a(x,v)=(1-t)F(x,v)+tF(x,-v),
\end{equation}
\begin{equation}\label{eq:m}
F_t^m(x,v) = \max\{((1-t) F(x,v),t F(x,-v)\}.
\end{equation}
 
 As before, the superscripts $a$ and $m$ stand for ``arithmetic" and ``maximum". 
 We observe that the fact that $F_t^a$ and $F_t^m$ are Finsler structures follows from our definitions (\cref{def:weak-Finsler}). 
These can be regarded as special cases of \cref{two Finsler,two Finsler max}, in the case when $F_1=F$ and $F_2(x,v)=F(x,-v)$.
The following is an easy consequence of \cref{crucial}.
\begin{corollary}
\label{first-length-structure}
Let $F$ be a Finsler structure on $M$. For $t\in[0,1]$, consider the length structure $(l_F)_t$  obtained from $F$ using \cref{eq:int} and \cref{eq:family}.  Then, for every piecewise smooth path $\gamma:[a,b]\to M$, the following equality holds.
$$(l_F)_t(\gamma)=l_{F_t^a}(\gamma).$$
\end{corollary}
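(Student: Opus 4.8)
The plan is to unwind the definitions and apply \cref{crucial} directly, since \cref{first-length-structure} is essentially a specialisation of that proposition to the case $F_1 = F$, $F_2(x,v) = F(x,-v)$.

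First I would recall that, by \cref{eq:family}, for a piecewise smooth path $\gamma \colon [a,b] \to M$ we have $(l_F)_t(\gamma) = (1-t)\, l_F(\gamma) + t\, l_F(\gamma^{-1})$, where $\gamma^{-1}$ denotes the reverse path. The key elementary observation is that $l_F(\gamma^{-1}) = l_{F_2}(\gamma)$, where $F_2(x,v) := F(x,-v)$: indeed, if $\gamma \colon [a,b]\to M$ is parametrised so that $\gamma^{-1}(s) = \gamma(a+b-s)$, then $\dot{\gamma^{-1}}(s) = -\dot\gamma(a+b-s)$, and the change of variables $s \mapsto a+b-s$ in the defining integral \cref{eq:int} gives
\[
l_F(\gamma^{-1}) = \int_a^b F\big(\gamma^{-1}(s), \dot{\gamma^{-1}}(s)\big)\,ds = \int_a^b F\big(\gamma(s), -\dot\gamma(s)\big)\,ds = l_{F_2}(\gamma).
\]
(The reversal of orientation of the interval is harmless because the new variable still ranges over $[a,b]$.)

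Granting this, I would substitute to get $(l_F)_t(\gamma) = (1-t)\, l_F(\gamma) + t\, l_{F_2}(\gamma) = [(1-t) l_{F_1} + t l_{F_2}](\gamma)$ with $F_1 = F$. Then \cref{crucial} applies — a piecewise smooth path is in particular absolutely continuous — and yields $[(1-t) l_{F_1} + t l_{F_2}](\gamma) = l_{((1-t)F_1 + t F_2)}(\gamma)$. Finally, by definition \cref{eq:A}, $(1-t)F_1(x,v) + t F_2(x,v) = (1-t)F(x,v) + tF(x,-v) = F_t^a(x,v)$, so the right-hand side is exactly $l_{F_t^a}(\gamma)$, which is the claimed identity.

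There is no real obstacle here: the only point requiring minor care is the orientation-reversal bookkeeping in the change of variables showing $l_F(\gamma^{-1}) = l_{F_2}(\gamma)$, and the observation — already noted in the excerpt right before the corollary — that $F_t^a$ and $F_t^m$ genuinely are Finsler structures, so that the length functionals $l_{F_t^a}$ and $l_{F_2}$ are defined and \cref{crucial} is applicable with $F_2(x,v) = F(x,-v)$. Everything else is a direct chain of definitional rewrites.
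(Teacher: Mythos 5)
Your proof is correct and follows exactly the route the paper intends: the paper states the corollary as "an easy consequence of \cref{crucial}", and your argument supplies precisely the missing details — the change-of-variables identity $l_F(\gamma^{-1})=l_{F_2}(\gamma)$ with $F_2(x,v)=F(x,-v)$, followed by the specialisation of \cref{crucial} to $F_1=F$, $F_2$. Nothing further is needed.
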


\section{Families of weighted Finsler structures}\label{s:Families}
In this section, for a given  Finsler structure $F$ on a manifold $M$ with its induced distance function $d(F)$, we find conditions under which the modified distance functions $d(F)_t^a$ and $d(F)_t^m$, for $t\in [0,1]$  are also Finsler. 
Furthermore, in the case when these distance functions are Finsler, we describe their associated Lagrangians. 
Given two Finsler structures, we also provide a necessary and sufficient condition for a weighted Finsler structure obtained from them as in \cref{eq:A} or {eq:m} to induce the same distance function as the weighted distance function with the same weight. This  general result includes the examples presented in \cref{s:Examples} in a broad setting. 
This resuly also includes in a much broader setting a result obtained in \cite{PT2} for the Hilbert and Funk metrics, namely, the characterisation of the Minkowski functional of a Finsler structure obtained as the arithmetic symmetrisation of the one we started with.


\begin{theorem}\label{th:sum}
Let $F_1$ and $F_2$ be two geodesic Finsler structures on $M$, and $t$  a number in $(0,1)$. 
Then the Finsler structure $[(1-t)F_1+t F_2]$ is a geodesic Finsler structure inducing the same metric as $(1-t)d(F_1)+td(F_2)$ if and only if  for every $x,y \in M$, there is a absolutely continuous path $\gamma$ joining $x$ to $y$ which is a geodesic with respect to both  $F_1$ and $F_2$. 
Furthermore, in this case, any absolutely continuous path that is a geodesic for both $F_1$ and $F_2$ is also a geodesic for $[(1-t)F_1+t F_2]$.
\end{theorem}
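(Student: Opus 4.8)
The plan is to prove the equivalence by combining two facts already established in the paper: the general inequality $\delta((1-t)l+tl')(x,y) \geq [(1-t)\delta(l)+t\delta(l')](x,y)$ from \cref{average}, together with its equality criterion from \cref{average-maximum}, and the identity $l_{[(1-t)F_1+tF_2]}(\gamma) = [(1-t)l_{F_1}+tl_{F_2}](\gamma)$ for absolutely continuous paths from \cref{crucial}. Combining these, the distance induced by $[(1-t)F_1+tF_2]$ equals $\delta((1-t)l_{F_1}+tl_{F_2})$, and this always dominates $(1-t)d(F_1)+td(F_2)$; the whole content of the theorem is identifying when this inequality is an equality and when geodesics are shared.

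For the backward ("if") direction, I would assume that every pair $x,y$ is joined by an absolutely continuous $\gamma$ that is a geodesic for both $F_1$ and $F_2$. Then $l_{F_1}(\gamma)=d(F_1)(x,y)$ and $l_{F_2}(\gamma)=d(F_2)(x,y)$, so by \cref{crucial},
\[
l_{[(1-t)F_1+tF_2]}(\gamma)=(1-t)d(F_1)(x,y)+td(F_2)(x,y),
\]
which together with \cref{average} (applied to $l=l_{F_1}$, $l'=l_{F_2}$ — note $\delta(l_{F_i})=d(F_i)$ and, by \cref{two lengths}, $l_{F_i}$-length agrees with $l(d(F_i))$-length on absolutely continuous paths) shows that $d([(1-t)F_1+tF_2])(x,y)=(1-t)d(F_1)(x,y)+td(F_2)(x,y)$ and that $\gamma$ realises it, i.e. $\gamma$ is a geodesic for $[(1-t)F_1+tF_2]$. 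Since this holds for all $x,y$, the combined Finsler structure is geodesic and induces $(1-t)d(F_1)+td(F_2)$. The last sentence of the theorem also follows: if $\gamma$ joining $x,y$ is a geodesic for both $F_1$ and $F_2$, the same computation shows its $[(1-t)F_1+tF_2]$-length equals $(1-t)d(F_1)(x,y)+td(F_2)(x,y)=d([(1-t)F_1+tF_2])(x,y)$.

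For the forward ("only if") direction, assume $[(1-t)F_1+tF_2]$ is geodesic and induces $(1-t)d(F_1)+td(F_2)$. Fix $x,y$ and take an absolutely continuous geodesic $\gamma$ for $[(1-t)F_1+tF_2]$ joining them (using that the structure is geodesic; here one must check the geodesic can be taken absolutely continuous, which is part of the hypothesis that the structure is \emph{geodesic} in the paper's sense). Then by \cref{crucial},
\[
(1-t)l_{F_1}(\gamma)+tl_{F_2}(\gamma)=d([(1-t)F_1+tF_2])(x,y)=(1-t)d(F_1)(x,y)+td(F_2)(x,y).
\]
But $l_{F_1}(\gamma)\geq d(F_1)(x,y)$ and $l_{F_2}(\gamma)\geq d(F_2)(x,y)$ by \cref{longer}, and with $t\in(0,1)$ both coefficients are strictly positive, so the only way the weighted sum can match is $l_{F_1}(\gamma)=d(F_1)(x,y)$ and $l_{F_2}(\gamma)=d(F_2)(x,y)$; again by \cref{longer}, $\gamma$ is then a geodesic for both $F_1$ and $F_2$, as desired.

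The main obstacle I anticipate is regularity bookkeeping rather than a deep idea: \cref{crucial} and \cref{longer} are stated for absolutely continuous paths, while \cref{average}/\cref{average-maximum} are stated for abstract length structures on an arbitrary semigroupoid, so I must be careful that the infimum defining $d([(1-t)F_1+tF_2])$ — a priori over piecewise $C^1$ paths — is not lowered by enlarging to absolutely continuous paths, and that a $[(1-t)F_1+tF_2]$-geodesic furnished by the "geodesic" hypothesis is genuinely absolutely continuous so that \cref{crucial} applies to it. Both points are handled by \cref{two lengths} and \cref{longer} (length is computed the same way for absolutely continuous paths, and piecewise $C^1$ paths are absolutely continuous), but they need to be invoked explicitly. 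The strict positivity of $1-t$ and $t$ for $t\in(0,1)$ is what makes the "only if" direction's conclusion about \emph{both} $F_1$ and $F_2$ valid, and should be flagged.
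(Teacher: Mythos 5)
Your proposal is correct and follows essentially the same route as the paper: the ``if'' direction via \cref{crucial} and the lower bound of \cref{average}, and the ``only if'' direction by extracting an absolutely continuous geodesic of the combined structure and using strict positivity of $t$ and $1-t$ to force both $l_{F_i}(\gamma)=d(F_i)(x,y)$ (the paper phrases this last step as a proof by contradiction, but the content is identical). Your explicit flagging of the regularity bookkeeping via \cref{two lengths} and \cref{longer} is a welcome clarification of points the paper passes over silently.
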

\begin{proof}
Suppose that $\gamma$ is an absolutely continuous geodesic with respect to both $F_1$ and $F_2$.
Using \cref{two lengths,crucial}, we see that 
$$l_{((1-t)F_1+t F_2))}(\gamma)= (1-t)l_{F_1}(\gamma)+ tl_{F_2}(\gamma),$$
and since $\gamma$ is a geodesic for both $F_1$ and $F_2$, the latter term coincides with $(1-t)d(F_1)(x,y)+td(F_2)(x,y)$.

Now we prove that $\gamma$ is geodesic under $(1-t)F_1+t F_2$. 
Let $\beta$ be any arc connecting $x$ and $y$.
Then by \cref{two lengths,crucial} again,
\begin{align*}
l(d((1-t)F_1+ t F_2))(\alpha)&=l_{(1-t)F_1+tF_2}(\alpha)\geq (1-t)l_{F_1}(\alpha)+t l_{F_2}(\alpha)\\
& \geq (1-t)l_{F_1}(\gamma)+t l_{F_2}(\gamma)\\
&=l_{((1-t)F_1+ t F_2)}(\gamma)=l(d((1-t)F_1+tF_2))(\gamma).
 \end{align*}
Therefore $\gamma$ is an absolutely continuous path shorter than any piecewise $C^1$-path connecting $x$ and $y$, which means that it is a geodesic.

%
%


Next we turn to the necessity.
Suppose that $(1-t)F_1+t F_2$ is a  geodesic Finsler structure inducing $(1-t)d(F_1)+ t d(F_2)$.
Suppose, seeking a contradiction, that there are $x, y \in M$ such that there is no common absolutely continuous path connecting $x$ and $y$ which is a geodesic for both $F_1$ and $F_2$.
Let $\beta$ be an absolutely continuous geodesic connecting $x$ and $y$ with respect to $(1-t)F_1+t F_2$.
Then, $\beta$ is not a geodesic for either $F_1$ or $F_2$. Therefore we have $l_{((1-t)F_1+t F_2)}(\beta)=(1-t)l_{F_1}(\beta)+t l_{F_2}(\beta) > (1-t)d(F_1)(x,y)+ t d(F_2)(x,y)$, contradicting our assumption.
\end{proof}

\begin{corollary}
	Let $\Omega$ be an open convex subset of $\R^n$. Assume that $F_1$ and $F_2$
	are two Finsler structures on $\Omega$ such that straight line segments are  geodesic for both $d(F_1)$ and $d(F_2)$. Let $t\in [0,1]$. Then the metric $(1-t)d(F_1)+td(F_2)$ is Finsler with Lagrangian $(1-t)F_1+tF_2$ and straight line segments are geodesics for this metric as well.
	\end{corollary}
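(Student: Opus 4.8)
The plan is to deduce this directly from \cref{th:sum}. First I would dispose of the endpoint cases $t=0$ and $t=1$ separately, since \cref{th:sum} is stated only for $t\in(0,1)$: when $t=0$ the asserted metric is $d(F_1)$ with Lagrangian $F_1$, when $t=1$ it is $d(F_2)$ with Lagrangian $F_2$, and in both cases the claim is nothing but the hypothesis. So from now on assume $t\in(0,1)$.

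Next I would check that the hypotheses of \cref{th:sum} hold. Since $\Omega$ is convex, for any $x,y\in\Omega$ the affine path $\gamma_{x,y}\colon[0,1]\to\Omega$, $\gamma_{x,y}(s)=(1-s)x+sy$, has image contained in $\Omega$; it is smooth, hence absolutely continuous; and by assumption it is a geodesic for both $d(F_1)$ and $d(F_2)$, i.e.\ $l(d(F_i))(\gamma_{x,y})=d(F_i)(x,y)$ for $i=1,2$ in the sense of \cref{longer}. Taking these segments over all pairs $x,y$ shows in particular that each $F_i$ is a geodesic Finsler structure, and that for every $x,y\in M$ there is an absolutely continuous path joining them which is a geodesic with respect to both $F_1$ and $F_2$. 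This is exactly the condition appearing in \cref{th:sum}.

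Applying \cref{th:sum} then yields that $[(1-t)F_1+tF_2]$ is a geodesic Finsler structure whose induced metric is $(1-t)d(F_1)+td(F_2)$; by \cref{def:weak-Finsler} the Lagrangian of this Finsler structure is the function $(x,v)\mapsto(1-t)F_1(x,v)+tF_2(x,v)$, which is the meaning of the phrase ``Finsler with Lagrangian $(1-t)F_1+tF_2$''. The final clause of \cref{th:sum} says that any absolutely continuous path that is a geodesic for both $F_1$ and $F_2$ is also a geodesic for $[(1-t)F_1+tF_2]$; applying this to $\gamma_{x,y}$ gives that straight line segments are geodesics for $(1-t)d(F_1)+td(F_2)$ as well, completing the argument.

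The proof is essentially a bookkeeping exercise, so I do not anticipate a real obstacle; the only points meriting a moment's care are (a) that ``straight line segments are geodesics for $d(F_i)$'' is being used in the same length-minimising sense (\cref{longer}) as the word ``geodesic'' in \cref{th:sum}, and (b) that convexity of $\Omega$ is genuinely needed to guarantee the segments stay inside $\Omega$ and are therefore admissible paths in the length structure. Everything else is a direct citation of \cref{th:sum} together with the endpoint bookkeeping above.
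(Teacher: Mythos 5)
Your proposal is correct and follows exactly the route the paper takes: the paper's own proof is simply ``This immediately follows from Theorem \ref{th:sum}.'' Your version merely makes explicit the bookkeeping the paper leaves implicit (the endpoint cases $t=0,1$ and the verification that convexity makes the affine segments admissible common geodesics), which is a faithful expansion rather than a different argument.
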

	\begin{proof}
		This immediately follows from Theorem \ref{th:sum}.
	\end{proof}

In the special case when we consider a Finsler metric and its inverse metric, we have the following.

\begin{theorem}
\label{main-theorem}
Let $F$ be a  Finsler structure on $M$. Suppose that for every $x,y \in M$ there is an absolutely continuous bi-geodesic  joining $x$ to $y$. Then, for any $t\in [0,1]$, we have  
$$d(F)_t^a(x,y)= \delta(l_{(F_t^a)})(x,y).$$
 In particular $d(F)_t^a$ is a Finsler metric with Lagrangian $F_t^a$. Furthermore, every absolutely continuous bi-geodesic for $F$ is a bi-geodesic also for $F_t^a$.
 \end{theorem}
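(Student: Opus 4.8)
The plan is to obtain \cref{main-theorem} as the special case of \cref{th:sum} with $F_1=F$ and $F_2=\bar F$, where $\bar F$ is the \emph{reverse} Finsler structure $\bar F(x,v):=F(x,-v)$. First I would observe that $\bar F$ is a Finsler structure in the sense of \cref{def:weak-Finsler}: it is continuous, being the composition of $F$ with the continuous bundle map $v\mapsto -v$, and on each $T_xM$ the weak-norm axioms for $\bar F(x,\cdot)$ follow at once from those for $F(x,\cdot)$ via $-(v_1+v_2)=(-v_1)+(-v_2)$ and $\lambda(-v)=-(\lambda v)$ for $\lambda\ge 0$. With this notation $(1-t)F_1+tF_2=(1-t)F+t\bar F$ is exactly the Lagrangian $F_t^a$ of \cref{eq:A}.

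The key point is the interplay between $\bar F$ and path inversion. For any absolutely continuous path $\gamma\colon[a,b]\to M$, reparametrising by the affine map $s\mapsto a+b-s$ (which preserves absolute continuity) and using $\dot{\gamma^{-1}}(s)=-\dot\gamma(a+b-s)$ almost everywhere gives $l_{\bar F}(\gamma)=l_F(\gamma^{-1})$; taking infima over piecewise $C^1$ paths and using that inversion is a bijection between paths from $x$ to $y$ and paths from $y$ to $x$ then yields $d(\bar F)(x,y)=d(F)(y,x)$. Combining these with \cref{longer}, a path $\gamma$ from $x$ to $y$ is an absolutely continuous geodesic for $\bar F$ if and only if $\gamma^{-1}$ is an absolutely continuous geodesic for $F$; hence an absolutely continuous path is a geodesic for both $F$ and $\bar F$ if and only if it is an absolutely continuous bi-geodesic for $F$. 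In particular, the hypothesis of \cref{main-theorem} is exactly the hypothesis of \cref{th:sum} for the pair $(F,\bar F)$, and it also guarantees that both $F$ and $\bar F$ are geodesic Finsler structures (a bi-geodesic is in particular a geodesic, and the inverse of a bi-geodesic for $F$ is a geodesic for $\bar F$).

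Now I would apply \cref{th:sum} for $t\in(0,1)$: it gives that $F_t^a=(1-t)F+t\bar F$ is a geodesic Finsler structure whose induced metric $\delta(l_{F_t^a})$ equals $(1-t)d(F)+t\,d(\bar F)$, and by the identity $d(\bar F)(x,y)=d(F)(y,x)$ the right-hand side is $(1-t)d(F)(x,y)+t\,d(F)(y,x)=d(F)_t^a(x,y)$. The boundary cases $t=0,1$ are immediate, since $F_0^a=F$, $F_1^a=\bar F$, $d(F)_0^a(x,y)=d(F)(x,y)$ and $d(F)_1^a(x,y)=d(F)(y,x)=d(\bar F)(x,y)$. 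This establishes the displayed equality $d(F)_t^a(x,y)=\delta(l_{F_t^a})(x,y)$, and since $\delta(l_{F_t^a})=d(F_t^a)$ by definition, it proves that $d(F)_t^a$ is Finsler with Lagrangian $F_t^a$. For the last assertion I would invoke the ``furthermore'' clause of \cref{th:sum}: if $\gamma$ is an absolutely continuous bi-geodesic for $F$, then both $\gamma$ and $\gamma^{-1}$ are absolutely continuous bi-geodesics for $F$, hence geodesics for both $F$ and $\bar F$, hence geodesics for $F_t^a$; thus $\gamma$ is a bi-geodesic for $F_t^a$.

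I expect no real obstacle here, since everything is a reduction to \cref{th:sum}; the only points needing mild care are the reparametrisation identity $l_{\bar F}(\gamma)=l_F(\gamma^{-1})$ in the absolutely continuous category (routine, as the reparametrisation is affine) and handling the endpoints $t\in\{0,1\}$ separately because \cref{th:sum} is stated only for $t\in(0,1)$.
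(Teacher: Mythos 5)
Your proposal is correct, but it takes a genuinely different route from the paper. The paper proves \cref{main-theorem} directly: it invokes \cref{first-length-structure} to write $\delta(l_{F_t^a})(x,y)=\inf\{(1-t)l_F(c)+tl_F(c^{-1})\mid c\in\Gamma_{x,y}\}$, bounds this from below by splitting the infimum into $(1-t)\inf l_F(c)+t\inf l_F(c^{-1})=d(F)_t^a(x,y)$, and from above by evaluating on the bi-geodesic $\gamma$, using \cref{two lengths} throughout. You instead introduce the reverse structure $\bar F(x,v)=F(x,-v)$, verify $l_{\bar F}(\gamma)=l_F(\gamma^{-1})$ and $d(\bar F)(x,y)=d(F)(y,x)$, observe that bi-geodesics for $F$ are exactly the common geodesics of $F$ and $\bar F$, and then quote \cref{th:sum} with $F_1=F$, $F_2=\bar F$. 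All the supporting steps check out (the weak-norm axioms for $\bar F$, the change of variables for absolutely continuous paths, the equivalence of the two hypotheses, the separate treatment of $t\in\{0,1\}$ forced by \cref{th:sum} being stated only for $t\in(0,1)$, and the deduction of the bi-geodesic clause by applying the ``furthermore'' part of \cref{th:sum} to both $\gamma$ and $\gamma^{-1}$). What your approach buys is economy and unification: \cref{main-theorem} becomes a literal specialisation of \cref{th:sum}, in the spirit of the paper's own remark that $F_t^a$ is the case $F_1=F$, $F_2(x,v)=F(x,-v)$ of the two-structure family, and it avoids re-running the infimum argument. What the paper's direct proof buys is that it treats all $t\in[0,1]$ uniformly and does not require formalising the reverse Finsler structure or the correspondence between bi-geodesics of $F$ and common geodesics of the pair $(F,\bar F)$.
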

\begin{proof}
Let $\gamma$ be an absolutely continuous bi-geodesic joining $x$ to $y$. Using \cref{first-length-structure}, we see that 
\begin{equation*}
\delta(l_{(F_t^a)})(x,y)=\inf\{(l_F)_t^a(c)|c \in \Gamma_{x,y}\}=\inf\{(1-t)l_F(c)+tl_F(c^{-1})\mid c \in \Gamma_{x,y}\}.
\end{equation*}
The right hand side term is bounded from below as
\begin{equation}
\begin{split}
&\inf\{(1-t)l_F(c)+tl_F(c^{-1})\mid c \in \Gamma_{x,y}\}\\
&\geq (1-t)\inf\{l_F(c)\mid c\in \Gamma_{x,y}\}+t\inf\{l_F(c^{-1})\mid c \in \Gamma_{x,y}\}\\
&=(1-t)l_F(\gamma)+tl_F(\gamma^{-1})\\ 
&= (1-t)d(F)(x,y)+td(F)(y,x)\ (\text{by \cref{two lengths}}) \ =d(F)_t^a(x,y) .
\end{split}
\label{arithmetic t}
\end{equation}
On the other hand, again by \cref{two lengths}, we have
\begin{equation*}
\inf\{(1-t)l_F(c)+tl_F(c^{-1})\mid c \in \Gamma_{x,y}\}\leq (1-t)l_F(\gamma)+tl_F(\gamma^{-1})=d(F)_t^a(x,y).
\end{equation*}
Thus, $\delta(l(F_t^a)=d(F)_t^a$.
The last equality of \cref{arithmetic t} shows that both $\gamma$ and $\gamma^{-1}$ are geodesics for $F_t^a$.
%
\end{proof}

\cref{main-theorem} generalises a result of \cite{PT2} in which the Minkowski functional of the arithmetic symmetrisation of a Finsler metric is described. 


The next  two theorems deal with the max of two Finsler structures.

\begin{theorem}\label{th:max}
	Let $F_1$ and $F_2$ be two  Finsler structures on $M$. Assume that for every $x$ and $y$ there exists an absolutely continuous path $\gamma_{x,y}: [0,1]\to M$
joining $x$ and $y$ with the following properties.

\begin{enumerate}[(1)]
	\item 
	$\gamma_{x,y}$ is a geodesic from  $x$ to $y$ with respect to  both $F_1$ and $F_2$.
	\item
	The function $(1-t)F_1(\gamma_{x,y}(s),\dot{\gamma}_{x,y}(s))-t F_2(\gamma_{x,y}(s),\dot{\gamma}_{x,y}(s))$, $s\in [0,1]$, is either non-negative or non-positive for almost all $s$, that is, it does not change sign on the interval $[0,1]$.
\end{enumerate}
 Then the metric 
$$\max\{(1-t)d(F_1),t d(F_2)\}$$
 is a Finsler metric 
with Lagrangian 
$$\max\{(1-t)F_1,t F_2\}.$$
 Furthermore, $\gamma_{x,y}$ is a geodesic with respect to the Finsler metric $\max\{(1-t)d(F_1), t d(F_2)\}$.
\end{theorem}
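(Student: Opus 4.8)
The plan is to adapt the strategy of \cref{th:sum} and \cref{main-theorem}, replacing sums by maxima and using the key integral identity that the hypothesis (2) provides. First I would observe that the central point is a pointwise-to-integral passage: for a path $\beta$ on which $(1-t)F_1(\beta,\dot\beta)-tF_2(\beta,\dot\beta)$ never changes sign, one has $l_{\max\{(1-t)F_1,tF_2\}}(\beta)=\int_0^1\max\{(1-t)F_1(\beta(s),\dot\beta(s)),tF_2(\beta(s),\dot\beta(s))\}\,ds=\max\{(1-t)l_{F_1}(\beta),tl_{F_2}(\beta)\}$, since the maximum of the two integrands is the same integrand throughout. For a general path this fails and one only has the inequality $l_{\max\{(1-t)F_1,tF_2\}}(\beta)\geq\max\{(1-t)l_{F_1}(\beta),tl_{F_2}(\beta)\}$, which by \cref{two lengths} gives $l(d(\max\{(1-t)F_1,tF_2\}))(\beta)\geq\max\{(1-t)d(F_1)(x,y),td(F_2)(x,y)\}=\max\{(1-t)d(F_1),td(F_2)\}(x,y)$ for any $\beta$ from $x$ to $y$. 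Taking the infimum over $\beta$ yields one inequality: the Finsler metric induced by $\max\{(1-t)F_1,tF_2\}$ is bounded below by $\max\{(1-t)d(F_1),td(F_2)\}$.

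For the reverse inequality I would plug in the distinguished path $\gamma_{x,y}$. By hypothesis (1) it is a geodesic for both $F_1$ and $F_2$, so $l_{F_1}(\gamma_{x,y})=d(F_1)(x,y)$ and $l_{F_2}(\gamma_{x,y})=d(F_2)(x,y)$ (using \cref{two lengths} for absolutely continuous paths). By hypothesis (2) the pointwise max identity above applies to $\gamma_{x,y}$, so
\begin{align*}
l_{\max\{(1-t)F_1,tF_2\}}(\gamma_{x,y})&=\max\{(1-t)l_{F_1}(\gamma_{x,y}),tl_{F_2}(\gamma_{x,y})\}\\
&=\max\{(1-t)d(F_1)(x,y),td(F_2)(x,y)\}.
\end{align*}
Again invoking \cref{two lengths}, $l(d(\max\{(1-t)F_1,tF_2\}))(\gamma_{x,y})$ equals this quantity, so the distance from $x$ to $y$ in the metric induced by $\max\{(1-t)F_1,tF_2\}$ is at most $\max\{(1-t)d(F_1),td(F_2)\}(x,y)$. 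Combining with the previous paragraph gives equality, proving that $\max\{(1-t)d(F_1),td(F_2)\}$ is the Finsler metric with Lagrangian $\max\{(1-t)F_1,tF_2\}$; moreover the chain of equalities shows $\gamma_{x,y}$ realises the distance, hence is a geodesic for this metric.

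I expect the main obstacle to be making the pointwise max identity rigorous for merely absolutely continuous paths: hypothesis (2) only says the sign condition holds for almost all $s$, so I need that for a.e.\ $s$ one of $\max\{(1-t)F_1,tF_2\}=(1-t)F_1$ or $\max\{(1-t)F_1,tF_2\}=tF_2$ holds throughout (up to a null set), and then that the integral of the max splits as the max of the integrals — which is immediate once the integrand is globally one of the two functions a.e. A subtlety is that the sign condition could fail to pin down a single branch if the difference vanishes on a positive-measure set, but there it does not matter which branch one picks since the two integrands agree. I would also take care that $\dot\gamma_{x,y}$ exists a.e.\ (guaranteed by absolute continuity) so that $l_F$ of $\gamma_{x,y}$ is well defined, as noted in the paragraph preceding \cref{two lengths}. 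The rest is the bookkeeping of assembling the two inequalities, exactly parallel to the proof of \cref{th:sum}.
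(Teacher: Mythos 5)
Your proposal is correct and follows essentially the same route as the paper: the lower bound comes from $\max\{(1-t)F_1,tF_2\}\geq(1-t)F_1$ (and $\geq tF_2$) integrated along near-minimising paths, and the upper bound comes from evaluating along $\gamma_{x,y}$, where the sign hypothesis forces the integral of the max to equal the max of the integrals, hence $\max\{(1-t)d(F_1),td(F_2)\}(x,y)$. Your attention to the a.e.\ qualifications and to \cref{two lengths} for absolutely continuous paths matches the paper's use of the same lemma.
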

\begin{proof}
	Given $x,y\in M$ and $\epsilon>0$, there is a piecewise $C^1$-path $\alpha:[0,1]\to M$ such that 
	\begin{align*}
	d(\max\{(1-t)F_1,t F_2\})(x,y)
	&\geq \int_0^1 \max\{(1-t)F_1(\alpha(s),\dot{\alpha}(s)), tF_2(\alpha(s),\dot{\alpha(s)})\}\ ds -\epsilon\\
	  &\geq (1-t)\int_0^1F_1(\alpha(s),\dot{\alpha}(s)))-\epsilon\\ 
	  & \geq (1-t) d(F_1)(x,y)-\epsilon.
	 \end{align*}
 It follows that $d(\max\{(1-t)F_1,tF_2\})(x,y)
\geq (1-t)d(F_1)(x,y)$. Similarly we see that $d(\max\{(1-t)F_1,t F_2\})(x,y)
\geq t d(F_2)(x,y)$. Therefore we have 
$$d(\max\{(1-t)F_1,tF_2\})(x,y)\geq \max\{(1-t)d(F_1), t d(F_2)\}(x,y).$$

 Now we prove the reverse inequality. 
 Consider the path $\gamma=\gamma_{x,y}$ in the statement. We can assume without loss of generality that $$(1-t)F_1(\gamma(s),\dot{\gamma}(s))\geq t F_2(\gamma(s),\dot{\gamma}(s))$$
 for all $s\in [0,1]$, up to exchanging the roles of $F_1$ and $F_2$.
 Then by \cref{two lengths} and the Property (2), we have 
\begin{align*}
(1-t)d(F_1)(x,y)&=\int_0^1(1-t)F_1(\gamma(s),\dot{\gamma}(s))\ ds\\
&\geq \int_0^1t F_2(\gamma(s),\dot{\gamma}(s))\ ds\\
&=t d(F_2)(a,b).
\end{align*}
 Therefore $\max\{(1-t)d(F_1),t d(F_2)\}(x,y)=(1-t)d(F_1)(x,y)$. 
 Hence, again by \cref{two lengths}, we have 
\begin{equation}
\begin{split}
&\max\{(1-t)d(F_1),t d(F_2)\}(x,y)=(1-t)d(F_1)(x,y)\\
&=\int_0^1(1-t)F_1(\gamma(s),\dot{\gamma(s)})\ ds \\
&=\int_0^1\max\{(1-t)F_1(\gamma(s),\dot{\gamma}(s)),t F_2(\gamma(s),\dot{\gamma}(s)\}\ ds\\
&\geq d(\max\{(1-t)F_1,t F_2\})(x,y). 
\label{the second inequality}
\end{split}
\end{equation}

This proves the desired equality. Furthermore, the equality in \cref{the second inequality} shows that $\gamma_{x,y}$ is a geodesic with respect to the Finsler structure $\max\{(1-t)F_1,t F_2\}$.	
\end{proof}

We shall give now a necessary and sufficient condition for the equality between the two metrics appearing in \cref{th:max} to hold in the case when the Finsler structures are complete and $C^\infty$.

\begin{theorem}
\label{if and only if}
Let $F_1$ and $F_2$ be two  complete Finsler structures on $M$.
Then the Finsler metric induced by $\max\{(1-t)F_1, t F_2\}$ coincides with the metric $\max\{(1-t)d(F_1), td(F_2)\}$ if  the following condition (*) holds.
\smallskip

\noindent (*) For any two points $x, y$ in $M$, there is an absolutely continuous arc $\gamma$ joining $x$ and $y$ satisfying one of the three properties below.
\begin{enumerate}[(a)]
\item $\gamma$ is a geodesic with respect to  both $F_1$ and $F_2$, and $(1-t)F_1(\gamma(s),\dot{\gamma}(s))-tF_2(\gamma(s),\dot{\gamma}(s))$ is either non-negative or non-positive for almost all $s \in [0,1]$.
\item
$\gamma$ is a geodesic with respect to $F_1$, and $(1-t)F_1(\gamma(s), \dot{\gamma}(s)) \geq t F_2(\gamma(s), \dot{\gamma}(s))$ for almost all $s\in [0,1]$.
\item
$\gamma$ is a geodesic with respect to $F_2$, and $t F_2(\gamma(s), \dot{\gamma}(s)) \geq (1-t)F_1(\gamma(s), \dot{\gamma}(s))$ for almost all $s \in [0,1]$.
\end{enumerate}

If both $F_1$ and $F_2$ are complete $C^\infty$-Finsler structures, then (*) is also a necessary condition.
\end{theorem}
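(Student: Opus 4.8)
The sufficiency direction is already (essentially) proved: given two points $x,y$, pick the arc $\gamma$ furnished by $(*)$ and check that in each of the three cases one can run the argument of \cref{th:max}. In case (a) this is literally \cref{th:max}. In case (b), the sign condition $(1-t)F_1(\gamma(s),\dot\gamma(s))\ge tF_2(\gamma(s),\dot\gamma(s))$ gives $\max\{(1-t)F_1,tF_2\}(\gamma(s),\dot\gamma(s)) = (1-t)F_1(\gamma(s),\dot\gamma(s))$ pointwise a.e., so $l_{\max\{(1-t)F_1,tF_2\}}(\gamma) = (1-t)l_{F_1}(\gamma) = (1-t)d(F_1)(x,y)$ using \cref{two lengths} and the fact that $\gamma$ is an $F_1$-geodesic; combined with the universal lower bound $d(\max\{(1-t)F_1,tF_2\})(x,y)\ge\max\{(1-t)d(F_1),td(F_2)\}(x,y)$ established in the first half of the proof of \cref{th:max}, and the elementary observation that $(1-t)d(F_1)(x,y)\ge \max\{(1-t)d(F_1),td(F_2)\}(x,y)$ in this case (which follows because $(1-t)d(F_1)(x,y)=l_{\max}(\gamma)\ge d(\max\cdots)(x,y)\ge td(F_2)(x,y)$), one gets equality. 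Case (c) is symmetric. So the first claim needs only this short case analysis; I would present it as a lemma-style unpacking, emphasising that $(*)$ is exactly the hypothesis ``at each pair of points, at least one of the three local scenarios of \cref{th:max} occurs''.

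The substance of the theorem is the necessity direction under the extra hypothesis that $F_1,F_2$ are complete $C^\infty$-Finsler structures. Here the plan is: assume $d(\max\{(1-t)F_1,tF_2\}) = \max\{(1-t)d(F_1),td(F_2)\}$, fix $x,y$, and produce an arc $\gamma$ with one of properties (a)–(c). Since $\max\{(1-t)F_1,tF_2\}$ is a complete $C^\infty$-Finsler structure on the complement of the set where the two summands agree — but in general it is only continuous, not $C^\infty$ — one cannot directly invoke \cref{smooth Hopf-Rinow}; however \cref{Hopf-Rinow} still applies since $\max\{(1-t)F_1,tF_2\}$ is a continuous Finsler structure and completeness is inherited by \cref{hereditary}. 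So there is a (merely continuous/rectifiable, a priori) geodesic $\gamma$ from $x$ to $y$ for the metric $d(\max\{(1-t)F_1,tF_2\})$. The key point is then to show that such a $\gamma$ can be taken absolutely continuous and that along it the appropriate sign/geodesy conditions hold.

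The mechanism: by assumption $l_{\max}(\gamma) = d(\max\cdots)(x,y) = \max\{(1-t)d(F_1)(x,y), td(F_2)(x,y)\}$; say WLOG this max is $(1-t)d(F_1)(x,y)$. On the other hand, pointwise $\max\{(1-t)F_1,tF_2\}\ge (1-t)F_1$ along $\gamma$, so $l_{\max}(\gamma)\ge (1-t)l_{F_1}(\gamma)\ge (1-t)d(F_1)(x,y)$. Equality throughout forces two things: first, $(1-t)l_{F_1}(\gamma) = (1-t)d(F_1)(x,y)$, i.e. $\gamma$ is an $F_1$-geodesic (here I need $\gamma$ absolutely continuous to apply \cref{longer}; this is where I must use the $C^\infty$ regularity — a geodesic of a $C^\infty$ Finsler metric, or of a max of two such, should be shown to be $C^1$ away from the switching locus and Lipschitz overall, hence absolutely continuous, the regularity being the delicate technical input); second, $\int_0^1[\max\{(1-t)F_1,tF_2\} - (1-t)F_1](\gamma(s),\dot\gamma(s))\,ds = 0$ with a non-negative integrand, so $(1-t)F_1(\gamma(s),\dot\gamma(s)) \ge tF_2(\gamma(s),\dot\gamma(s))$ for a.e.\ $s$. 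That is exactly property (b) (and the symmetric alternative gives (c)); property (a) is the overlap case. Hence $(*)$ holds.

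The main obstacle I anticipate is precisely the regularity of the $\max\{(1-t)F_1,tF_2\}$-geodesic: \cref{Hopf-Rinow} only delivers a continuous rectifiable geodesic, while the whole argument above needs absolute continuity to invoke \cref{longer}/\cref{two lengths}. Here one must genuinely use that $F_1,F_2$ are $C^\infty$: the Lagrangian $\max\{(1-t)F_1,tF_2\}$ is a convex, positively homogeneous, locally Lipschitz (indeed piecewise smooth) Finsler structure, and one should argue — via the $C^\infty$ Hopf--Rinow theorem \cref{smooth Hopf-Rinow} applied on the open regions $\{(1-t)F_1 > tF_2\}$ and $\{(1-t)F_1 < tF_2\}$, together with a matching/Lipschitz estimate across the switching hypersurface $\{(1-t)F_1 = tF_2\}$ — that minimizers are Lipschitz, hence absolutely continuous. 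Alternatively, one invokes the generalisation of Busemann--Mayer to absolutely continuous paths (\cref{two lengths}) together with the known fact (e.g.\ from \cite{CZ} or the Hopf--Rinow circle of ideas) that length-minimizing paths in a locally Lipschitz Finsler manifold admit Lipschitz reparametrisations. I would state this as a separate lemma (``a geodesic for $\max\{(1-t)F_1,tF_2\}$ with $F_i\in C^\infty$ is absolutely continuous'') and, once it is in hand, the rest of the necessity proof is the short computation sketched above.
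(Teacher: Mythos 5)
Your argument is correct and follows essentially the same route as the paper: the sufficiency is the same three-case analysis (with (a) delegated to \cref{th:max} and (b), (c) handled by identifying the max-Lagrangian with one summand along $\gamma$), and the necessity rests on the same equality-forcing chain $l_{\max}(\gamma)\ge(1-t)l_{F_1}(\gamma)\ge(1-t)d(F_1)(x,y)$ along a geodesic of the max structure, which you state directly where the paper argues by contraposition. The only substantive divergence is the regularity of that geodesic: the paper simply applies \cref{smooth Hopf-Rinow} to $\max\{(1-t)F_1,tF_2\}$, while you correctly point out that this Lagrangian is in general not $C^\infty$ even when $F_1$ and $F_2$ are, so the absolute continuity of the minimizer needs the separate lemma you sketch --- a legitimate concern that the paper's own proof passes over in silence rather than a defect of your approach.
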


\begin{proof}
We first show the sufficiency of the conditions.

If Condition (a) holds, the conclusion follows from \cref{th:max}.

Suppose now that Condition (b) holds.
Then, since  $\gamma$ is an absolutely continuous geodesic with respect to $F_1$, we have, by \cref{two lengths},  $(1-t)d(F_1)(x,y)=(1-t)l_{F_1}(\gamma)$.
Since $(1-t)F_1(\gamma(s), \dot{\gamma}(s)) \geq t F_2(\gamma(s), \dot{\gamma}(s))$ all along $\gamma$ at almost every point, by \cref{longer}, we see that $(1-t)l_{F_1}(\gamma)\geq tl_{F_2}(\gamma)\geq t d(F_2)(x,y)$ .
Therefore, we have 
\begin{equation}
\label{max ineq}
\max\{(1-t)d(F_1)(x,y), t d(F_2)(x,y)\}=(1-t)l_{F_1}(\gamma).\end{equation}

On the other hand since $\max\{(1-t)F_1, t F_2\}(x,v) \geq (1-t)F_1(x,v)$ for all $x$ and $v \in T_x(M)$, the minimality of $\gamma$ with respect to $F_1$ implies that for any arc $\delta $ joining $x$ and $y$, we have $l_{(\max\{(1-t)F_1, tF_2\})}(\gamma) = (1-t)l_{F_1}(\gamma) \leq (1-t)l_{F_1}(\delta) \leq l_{(\max\{(1-t)F_1, tF_2\})}(\delta)$.
Therefore,  by \cref{longer}, $\gamma$ is a geodesic also with respect to $\max\{(1-t)F_1, tF_2\}$ and  $d(\max\{(1-t)F_1, t F_2\})(x,y)=(1-t)l_{F_1}(\gamma)$.
Combining this with \cref{max ineq}, we see that $d(\max\{(1-t)F_1, t F_2\})(x,y)=\max\{(1-t)d(F_1),  d(F_2)\}$.
The same argument works also for Condition (c).

Now we turn to the necessity.
We shall show the contraposition.
Suppose that none of (a,b,c) in the statement hold.

By \cref{hereditary}, we see that $\max\{(1-t)F_1, tF_2\}$ is also complete.
By \cref{smooth Hopf-Rinow}, there is a $C^\infty$-geodesic $\gamma$ with respect to $\max\{(1-t)F_1, tF_2\}$ joining $x$ to $y$.
Suppose first that $\gamma$ is also a geodesic for $F_1$.
Since neither (a) nor (b) holds, there is a non-trivial open interval $(a,b)$ in $[0,1]$ such that $tF_2(\gamma(s), \dot\gamma(s)) > (1-t)F_1(\gamma(s), \dot{\gamma}(s))$ for $s \in (a,b)$.
Then we have \begin{equation*}
\begin{split}
&l_{\max\{(1-t)F_1, t F_2\}}(\gamma)\\& \geq \int_a^b t F_2(\gamma(s), \dot\gamma(s))ds+\int_{[0,1]\setminus (a,b)} (1-t)F_1(\gamma(s), \dot{\gamma}(s))ds\\
&> \int_0^1 (1-t)F_1(\gamma(s), \dot{\gamma}(s)) ds=l_{((1-t)F_1)}(\gamma)=(1-t)d(F_1)(x,y).
\end{split}
\end{equation*}
If $\gamma$ is also a geodesic with respect to $F_2$, since (a) does not hold, by the same argument as above, changing the roles of $(1-t)F_1$ and $tF_2$, we see that $l_{\max\{(1-t)F_1, tF_2\}}(\gamma) > t d(F_2)(x,y)$.
If $\gamma$ is not a minimal path for $F_2$,  we have $l_{(\max\{(1-t)F_1, t F_2\})}(\gamma) \geq l_{(tF_2)}(\gamma) > t d(F_2)(x,y)$.
Therefore for such a path, we always have $l_{\max\{t F_1, (1-t)F_2\}}(\gamma)> \max\{td(F_1), (1-t)d(F_2)\}(x,y).$

By the same argument, and changing now $(1-t)F_1$ into $t F_2$, we see that for any path $\gamma$ connecting $x$ and $y$ that is minimal with respect to $F_2$, we have $l_{\max\{(1-t)F_1, tF_2\}}(\gamma)> \max\{(1-t)d(F_1), td(F_2)\}(x,y).$
If $\gamma$ is a geodesic for neither $F_1$ nor $F_2$,
then we have \begin{equation*}
\begin{split}
l_{\max\{(1-t)F_1, t F_2\}}(\gamma) &\geq \max\{l_{((1-t)F_1)}(\gamma), l_{(tF_2)}(\gamma)\}\\& > \max\{(1-t)d(F_1)(x,y), td(F_2)(x,y)\}.
\end{split}
\end{equation*}

Thus in all of  the three cases, we have 
\begin{equation*}
\begin{split}
d(\max\{(1-t)F_1, F_2\})(x,y)&=l_{\max\{(1-t)F_1, t F_2\}}(\gamma)\\ &>\max\{(1-t)d(F_1)(x,y), td(F_2)(x,y)\},
\end{split}
\end{equation*}
which is the negation of the assumption.

\end{proof}

\section{The Funk metric, the Hilbert metric and the weighted Funk Metrics}\label{s:Weighted-Funk}

Throughout this section, $\Omega$ is a closed convex subset of $\R^n$ with nonempty interior $\mathring{\Omega}$ and boundary $\partial \Omega$. 
We shall study two one-parameter families of weak metrics on $\Omega$. Both families are deformations of the Funk metric and one of them contains the Hilbert metric as a special case of the parameter. We first recall the definition of the Funk and Hilbert metrics on $\mathring{\Omega}$.
 
 Let us fix a real number $t\in [0,1]$. Given a pair of distinct points $x,y\in\mathring{\Omega}$, we denote the ray with origin $x$ and passing through  $y$ by $R(x,y)$. Let $a^+$ be the intersection point of $R(x,y)$ with the boundary $\partial \Omega$ of $\Omega$,  if this intersection is non-empty, that is, if $R(x,y)\not\subset \Omega$.  
(The convexity implies the uniqueness of the intersection point.)
Similarly let $a^-$ be the intersection point of $R(y,x)$ with $\partial \Omega$  if this point exists. 
The \emph{Funk metric} on $\Omega$ is defined, for $x\not=y$, by the formula
$$\mathcal{F}(x,y)=\left\{
\begin{array}{ll}
      \log\frac{\lvert x - a^+\rvert }{\lvert y-a^+\rvert}& R(x,y)\not\subset\Omega \\
      0 &   R(x,y)\subset \Omega . \\   
\end{array} 
\right.
$$
We define $\mathcal{F}(x,y)=0$ when $x=y$. 
The \emph{Hilbert metric} $\mathcal{H}$ is the arithmetic symmetrisation of the Funk metric $\mathcal{F}$, that is, it is defined by  $$\mathcal{H}(x,y)=\frac{1}{2}(\mathcal{F}(x,y)+\mathcal{F}(y,x)).$$
In particular, whenever both  $a^+$ and $a^-$ exist, the following formula holds: 
\begin{equation}\label{eq:Hilbert}\mathcal{H}(x,y)=\frac{1}{2}\log\bigg(\frac{\lvert x-a^+\rvert}{\lvert y-a^+\rvert}\frac{\lvert  y-a^-\rvert}{\lvert x-a^-\rvert}\bigg).
\end{equation}

Note that in \cref{eq:Hilbert} the quantity between parenthesis is the cross ratio of the ordered quadruple of points $(a^+, x, y, a^-)$,  which is a projective invariant, and that this makes the Hilbert metric a projectively invariant metric. More precisely, this  metric is invariant by the projective transformations of $\mathbb{R}^n$ that preserve  $\Omega$. In the present paper however, we shall not use this important fact. 

We consider the following two families of metrics, defined for $t\in [0,1]$: 
\begin{equation}\label{eqn1}
\mathcal{F}_t^a(x,y)=(1-t) \mathcal{F}(x,y)+t\mathcal{F}(y,x)
\end{equation}
\begin{equation}\label{eqn2}
\mathcal{F}_t^m(x,y)=\max\{(1-t)\mathcal{F}(x,y), t\mathcal{F}(y,x)\}.
\end{equation}

The Funk metric is a special case of \cref{eqn1} and \cref{eqn2}  obtained by setting $t=0$.
The Hilbert metric is a special case of \cref{eqn1} obtained by setting $t=1/2$.
Using the terminology we set before, we shall call a metric in one of the families  \cref{eqn1} and \cref{eqn2} a \emph{weighted} (\emph{arithmetic} and  \emph{max} respectively) \emph{Funk metric}.

\begin{proposition}\label{prop:weighted-geodesic}  For every $t\in[0,1]$, the straight lines are geodesics for the arithmetic weighted Funk metric $\mathcal{F}_t^a$.
\end{proposition}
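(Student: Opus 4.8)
The plan is to deduce this from \cref{th:sum} (or more precisely its specialisation to a Finsler structure and its reverse, together with the already-recorded fact that the Funk metric is Finsler and that Euclidean segments are Funk geodesics). First I would recall that the Funk metric $\mathcal{F}$ on $\mathring\Omega$ is the distance function $d(F)$ of a Finsler structure $F$ whose Lagrangian is the tautological weak Minkowski functional of $\Omega$, and that for the Funk metric the Euclidean straight segment from $x$ to $y$ is a geodesic; this is classical and is the content alluded to at the start of \cref{s:Weighted-Funk}. Then $\mathcal{F}_t^a = (1-t)d(F) + t\, d(F)^{\mathrm{rev}}$, where $d(F)^{\mathrm{rev}}(x,y) = d(F)(y,x)$ is the distance of the reverse Finsler structure $\bar F(x,v) := F(x,-v)$.

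The key observation is that the \emph{same} Euclidean segment works as a geodesic for both $F$ and $\bar F$: if $\gamma$ is the affinely parametrised segment from $x$ to $y$, then $\gamma$ is a Funk geodesic from $x$ to $y$, and its reverse $\gamma^{-1}$ is the affine segment from $y$ to $x$, which is again a Funk geodesic; hence $\gamma$ is a $\bar F$-geodesic from $x$ to $y$. So the affine segment $\gamma_{x,y}$ is a common absolutely continuous geodesic for $F_1 := F$ and $F_2 := \bar F$. Now I apply \cref{th:sum} with these $F_1, F_2$ and the given $t \in (0,1)$: the hypothesis that $F_1, F_2$ are geodesic Finsler structures and that every pair of points is joined by a common absolutely continuous geodesic is exactly what we have just verified, so \cref{th:sum} gives that $[(1-t)F + t\bar F]$ is a geodesic Finsler structure inducing $(1-t)d(F) + t\,d(\bar F) = \mathcal{F}_t^a$, and moreover that the common geodesic $\gamma_{x,y}$ — the affine segment — is a geodesic for $[(1-t)F + t\bar F]$, i.e.\ for $\mathcal{F}_t^a$. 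This proves the proposition. The endpoint cases $t=0$ (the Funk metric itself) and $t=1$ (the reverse Funk metric) follow directly from the classical statement that affine segments are Funk geodesics.

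The only genuine point requiring care — and the step I expect to be the main obstacle to a fully self-contained write-up — is justifying that the Funk metric really is the distance function of a Finsler structure in the sense of \cref{def:weak-Finsler} and that affine segments are Funk geodesics in the strong (absolutely continuous) sense needed to invoke \cref{th:sum}. The first part is standard: the Funk Lagrangian $F(x,v)$ is, up to the usual normalisation, $\lvert v\rvert$ divided by the distance from $x$ to $\partial\Omega$ in the direction $v$, which is continuous on $T\mathring\Omega$ and is a weak norm on each tangent space by convexity of $\Omega$. The second part — that the affine segment from $x$ to $y$ realises $\mathcal{F}(x,y)$ — is a short computation: parametrising the segment and integrating $F$ along it telescopes to $\log\frac{\lvert x-a^+\rvert}{\lvert y-a^+\rvert}$, and the reverse lower bound follows because any other path from $x$ to $y$ has at least this $F$-length (this is the well-known fact that Funk geometry is a solution to a weak form of Hilbert's fourth problem). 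Since the excerpt treats the Finsler nature of Funk and the geodesy of segments as background, in the actual proof I would simply cite that background and then invoke \cref{th:sum}, keeping the argument to the two short paragraphs above.
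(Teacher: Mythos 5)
Your proposal is correct and follows essentially the same route as the paper: the paper proves this proposition by deferring to \cref{cor:straight-lines}, which applies \cref{main-theorem} (precisely the specialisation of the weighted-sum theorem to a Finsler structure and its reverse that you identify) to the Funk Lagrangian $p$, using the classical facts that the Funk metric is Finsler and that Euclidean segments are bi-geodesics for it. Your observation that the affine segment is a common absolutely continuous geodesic for $F$ and $\bar F$ is exactly the bi-geodesic hypothesis the paper's argument rests on.
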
\label{prop;geodesics}
\begin{proof}
The proof can be done in  the same as for the case of the Hilbert metric, but we give a proof of a more general statement below (\cref{cor:straight-lines}).
\end{proof}

\cref{prop:weighted-geodesic} gives a family of examples of metrics satisfying the condition of Hilbert's 4th problem, see \cite{Hilbert} for the statement   and \cite{Papa-Hilbert} for a recent overview of this problem. See also \cref{cor:straight-lines} below.

Likewise, we have the following analogue of a property known for the Hilbert metric.

\begin{proposition}\label{prop:uniqueness}  
For all $t\not=0$, the following two conditions are equivalent.
\begin{enumerate} 
\item The metric $\mathcal{F}_t^a$ is uniquely geodesic.
\item $\partial \Omega$ does not contain any pair of Euclidean segments that are contained in the same Euclidean plane and that are not collinear.
\end{enumerate}
\end{proposition}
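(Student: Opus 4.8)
The plan is to prove the equivalence by proving the contrapositive of each implication, working with the structure of the arithmetic weighted Funk metric $\mathcal F_t^a$ and the fact (from \cref{prop:weighted-geodesic}) that Euclidean straight segments are geodesics for $\mathcal F_t^a$. Throughout, fix $t\in(0,1]$ and write $d=\mathcal F_t^a$. The key observation is that $\mathcal F_t^a(x,y)=(1-t)\mathcal F(x,y)+t\mathcal F(y,x)$, and that the classical formula for the Funk length of a segment shows that the $d$-length of a segment $[x,y]$ lying inside a chord $(a^-,a^+)$ of $\Omega$ (with $a^+$ the endpoint on the $x\to y$ side) equals $(1-t)\log\frac{|x-a^+|}{|y-a^+|}+t\log\frac{|y-a^-|}{|x-a^-|}$ (with the appropriate convention when only one of $a^\pm$ exists, which can only happen when $\partial\Omega$ is unbounded). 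So $d(x,y)$ is realized by a segment, and any geodesic must have the same $d$-length as the segment joining its endpoints.

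For the direction (2) $\Rightarrow$ (1): suppose $\partial\Omega$ contains no two non-collinear coplanar segments, and suppose $\gamma$ is a $d$-geodesic from $x$ to $y$ that is not the straight segment $[x,y]$. I would first reduce to a local picture: pick a subarc of $\gamma$ lying in a $2$-plane $P$ through $x,y$ (here I use that $\gamma$ is rectifiable and, if needed, restrict to points where $\gamma$ leaves $[x,y]$), and intersect with $\Omega\cap P$, a convex planar region. The strategy is to compare, for the pair of points where $\gamma$ re-meets the line $xy$ (or for $x,y$ themselves), the $d$-length of $\gamma$ with that of the straight segment, and to show the comparison is strict unless $\partial\Omega$ contains a pair of non-collinear segments in $P$. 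Concretely, along $\gamma$ one bounds $\mathcal F$ and its reverse by integrating the Funk Lagrangian; equality in the triangle-type inequality forces the supporting lines of $\partial\Omega$ at the relevant "forward" and "backward" boundary points to be constant along subarcs of $\gamma$, i.e. $\partial\Omega$ contains honest segments, and two distinct such (coming from $a^+$-side and $a^-$-side, or from a genuine bend in $\gamma$) that are coplanar and non-collinear — contradicting (2). Hence $\gamma=[x,y]$ and $d$ is uniquely geodesic.

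For the direction (1) $\Rightarrow$ (2), again contrapositive: assume $\partial\Omega$ contains two non-collinear segments $\sigma_1,\sigma_2$ lying in a common plane $P$. I would construct two distinct $d$-geodesics between a suitable pair of interior points. The idea is that when a portion of $\partial\Omega$ is flat (a segment $\sigma$), the forward Funk distance only "sees" the line extending $\sigma$, so there is freedom: one can move the path within $\Omega\cap P$ transversally while keeping the relevant $a^+$ on the line through $\sigma_1$ and the relevant $a^-$ on the line through $\sigma_2$, and recompute that the $d$-length is unchanged. Picking $x,y$ in $\mathring\Omega\cap P$ positioned so that the $x\to y$ ray hits $\sigma_1$ and the $y\to x$ ray hits $\sigma_2$, the straight segment $[x,y]$ is one geodesic; a small broken or bent path staying in the region bounded by the cone over $\sigma_1$ (forward) and the cone over $\sigma_2$ (backward) gives another, with the same $d$-length, hence another geodesic — so $d$ is not uniquely geodesic. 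The main obstacle I anticipate is the first direction: making rigorous the "equality forces flatness of $\partial\Omega$" step, i.e. extracting from the equality case of the length comparison the precise conclusion that two non-collinear coplanar boundary segments must exist, rather than just one, and handling the unbounded-$\Omega$ cases where $a^+$ or $a^-$ may fail to exist. This will require a careful analysis of the cross-ratio/Funk-length formula along a general rectifiable competitor and a localization argument to a $2$-plane.
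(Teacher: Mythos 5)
Your outline follows essentially the same route as the paper, whose entire proof of \cref{prop:uniqueness} is the single remark that the argument is the same as for the Hilbert metric in \cite{Harpe}; your plan --- straight segments are geodesics, equality in the triangle inequality for the weighted Funk length forces collinear boundary points and hence boundary segments, and conversely two non-collinear coplanar boundary segments yield broken geodesics of the same length --- is precisely that classical argument with the weights $(1/2,1/2)$ replaced by $(1-t,t)$. The rigidity step you flag as the main obstacle is exactly the content of the cited proof, so your approach does not diverge from the paper's.
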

The proof is the same as for the Hilbert metric given in \cite{Harpe}.

\begin{remark}\label{rk:weight}
By taking an appropriate example, we can see that in general the weighted max Funk metrics do not satisfy Proposition \ref{prop:weighted-geodesic}. See for instance the five ordered points $a^-, x,y,z,a^+$ depicted in Figure \ref{fig:m-geodesic}, for which we have $\mathcal{F}_{1/2}^m(x,y)= \log 1$, $\mathcal{F}_{1/2}^m(y,z)= \log 7/2$ and $\mathcal{F}_{1/2}^m(x,z)= \log 4$.
\end{remark}

%
%
%
%
%


	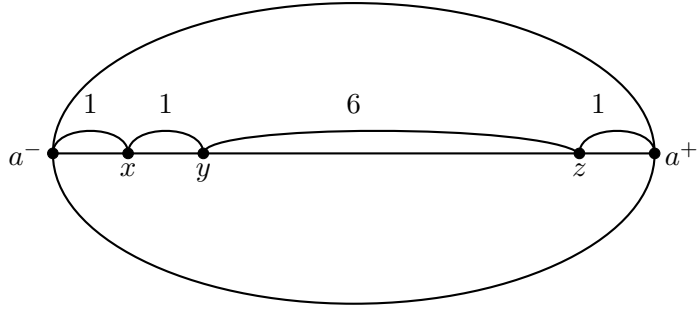
\begin{figure}[h!]
		
	\centering
	\begin{tikzpicture}
		\draw[thick] (0,0) ellipse (4cm and 2cm);
		
		\draw[thick] (-4,0) -- (4,0);
		
		\filldraw (-4,0) circle (2pt) node[left] {\(a^-\)};
		\filldraw (4,0) circle (2pt) node[right] {\(a^+\)};
		\filldraw (-3,0) circle (2pt) node[below] {$x$};
		\filldraw (-2,0) circle (2pt) node[below] {$y$};
		\filldraw (3,0) circle (2pt) node[below] {$z$};
		
		\draw[thick] (-4,0) .. controls (-4,0.4) and (-3,0.4) .. (-3,0);
		\node[above] at (-3.5,0.4) {$1$};
		
		\draw[thick] (-3,0) .. controls (-3,0.4) and (-2,0.4) .. (-2,0);
		\node[above] at (-2.5,0.4) {$1$};
		
		\draw[thick] (-2,0) .. controls (-2,0.4) and (2.5,0.4) .. (3,0);
		\node[above] at (0,0.4) {$6$};
		
		\draw[thick] (3,0) .. controls (3,0.4) and (4,0.4) .. (4,0);
		\node[above] at (3.25,0.4) {$1$};
	\end{tikzpicture}
	\caption{In this figure, the numbers denote Euclidean length. They are chosen so that the line drawn is not a geodesic for the weighted max Funk metric with weight $1/2$ (\cref{rk:weight}).}
	\label{fig:m-geodesic}
\end{figure}

The proof of the following  proposition consists, like in \cref{rk:weight}, of taking an appropriate numerical example, and it is left to the reader.

%
%

\begin{proposition}
Among the weak metrics in the two families $\mathcal{F}_t^a$ and $\mathcal{F}_t^m$ defined in \cref{eqn1} and  \cref{eqn2}, only those corresponding to the value $t=1/2$ are symmetric.
\end{proposition}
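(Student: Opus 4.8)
The plan is to prove the statement by exhibiting, for each $t \neq 1/2$, a single pair of points $x, y \in \mathring{\Omega}$ at which $\mathcal{F}_t^a(x,y) \neq \mathcal{F}_t^a(y,x)$ (and likewise for $\mathcal{F}_t^m$), and conversely to verify that for $t = 1/2$ both families reduce to symmetric metrics. The easy direction is $t = 1/2$: by definition $\mathcal{F}_{1/2}^a(x,y) = \tfrac12(\mathcal{F}(x,y) + \mathcal{F}(y,x)) = \mathcal{H}(x,y)$, which is manifestly symmetric, and $\mathcal{F}_{1/2}^m(x,y) = \tfrac12\max\{\mathcal{F}(x,y), \mathcal{F}(y,x)\}$, which is also symmetric since $\max$ is a symmetric function of its two arguments. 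So the content is the "only" part.

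For the "only" part, fix $t \in [0,1]$ with $t \neq 1/2$. First I would reduce to a one-dimensional situation: choose any line segment $\ell$ properly contained in $\mathring{\Omega}$ whose closure meets $\partial\Omega$ in two points $a^-, a^+$ (possible since $\Omega$ has nonempty interior), and pick $x, y$ on $\ell$ with $x \neq y$, both strictly between $a^-$ and $a^+$. Then both $a^+$ (for $R(x,y)$) and $a^-$ (for $R(y,x)$) exist, so $\mathcal{F}(x,y) = \log\frac{|x - a^+|}{|y - a^+|}$ and $\mathcal{F}(y,x) = \log\frac{|y - a^-|}{|x - a^-|}$, both strictly positive. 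Now the key observation: the ratio $\mathcal{F}(x,y)/\mathcal{F}(y,x)$ can be made to take any positive value by moving $x$ and $y$ along $\ell$; in particular, by choosing $x$ very close to $a^+$ we can force $\mathcal{F}(x,y)$ to be arbitrarily large while $\mathcal{F}(y,x)$ stays bounded, so we can arrange $\mathcal{F}(x,y) \neq \mathcal{F}(y,x)$, and indeed arrange any prescribed unequal pair of positive values $(p, q)$ with $p \neq q$. Given such $x, y$ with $\mathcal{F}(x,y) = p$, $\mathcal{F}(y,x) = q$, $p \neq q$, we compute
\begin{equation*}
\mathcal{F}_t^a(x,y) - \mathcal{F}_t^a(y,x) = \big[(1-t)p + tq\big] - \big[(1-t)q + tp\big] = (1 - 2t)(p - q),
\end{equation*}
which is nonzero precisely because $t \neq 1/2$ and $p \neq q$. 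Hence $\mathcal{F}_t^a$ is not symmetric.

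For $\mathcal{F}_t^m$ the same choice works with a little more care: pick $x, y$ so that $\mathcal{F}(x,y) = p$ and $\mathcal{F}(y,x) = q$ are very different in size, say $p$ much larger than $q$. Then, assuming $t \in (0,1)$ (the cases $t = 0$ and $t = 1$ are immediate, since e.g. $\mathcal{F}_0^m = \mathcal{F}$ is the Funk metric, already known to be non-symmetric), for $p$ large enough we have $(1-t)p > tq$, so $\mathcal{F}_t^m(x,y) = (1-t)p$, while also $(1-t)q$ versus $tp$: if $t < 1/2$ pick $p$ large so that $tp > (1-t)q$, giving $\mathcal{F}_t^m(y,x) = tp$, and then $(1-t)p \neq tp$ since $p \neq 0$ and $t \neq 1/2$; if $t > 1/2$, a symmetric adjustment (take $q$ large relative to $p$, or swap the roles) produces the inequality. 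I expect the main (very mild) obstacle is just bookkeeping: making sure the numerical choices of $x, y$ simultaneously put us in the right regime of the three cases defining $\mathcal{F}_t^m$, i.e. controlling which term the $\max$ selects on each side; this is handled by the freedom to send $x \to a^+$ (or $y \to a^-$) so that one Funk value dominates. As the paper itself notes, the cleanest write-up is simply to exhibit an explicit numerical example as in \cref{rk:weight}, and I would phrase the final proof in that style, leaving the routine verification to the reader.
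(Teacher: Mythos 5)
Your proposal is correct and follows exactly the route the paper intends: the paper's entire proof is the remark that one takes an appropriate numerical example as in \cref{rk:weight} and leaves the verification to the reader, which is precisely what you carry out (symmetry at $t=1/2$ by inspection, and for $t\neq 1/2$ a pair with $\mathcal{F}(x,y)=p\neq q=\mathcal{F}(y,x)$, giving $(1-2t)(p-q)\neq 0$ in the arithmetic case and a dominant-term comparison in the max case). One small slip: with the points ordered $a^-,x,y,a^+$, it is sending $y$ toward $a^+$ (not $x$) that makes $\mathcal{F}(x,y)=\log\bigl(|x-a^+|/|y-a^+|\bigr)$ blow up while $\mathcal{F}(y,x)$ stays bounded; the argument is otherwise sound.
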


The  Funk and Hilbert metrics are classical examples of Finsler metrics and we shall recall their infinitesimal Finsler forms. Later in the paper, we shall consider weighted Funk and Hilbert metrics.

Let $\Omega$ be a  convex set in $\R^n$ with nonempty interior. For $x\in \mathring{\Omega}$  and for any nonzero vector $v\in \R^n=T_x\Omega$, we define 
the following function:
$$p(x,v)=\inf\{t>0\mid \frac{v}{t}+x \in \Omega\}.$$
Then $p$ is a Finsler structure on $\Omega$. It  is well known that the metric induced by this Finsler metric on $\Omega$ is the Funk metric \cite{Troyanov2014}. Note that if $a$ is the point on the boundary
of $\Omega$ which is on the ray with origin $x$ and direction $v$, then 

$$p(x,v)=\frac{\lvert v \rvert}{\lvert x-a \rvert }.$$

\noindent On the other hand, we set
$$q(x,v)=\frac{1}{2}(p(x,v)+p(x,-v)).$$
Then $q$ is a Finsler structure on $\mathring{\Omega}$ and the induced metric on $\Omega$ is the Hilbert metric. It is also known that if $a$ and $b$ are the intersection points of the line through $x$
in the direction of $v$ and the boundary of $\Omega$. Then
$$q(x,v)=\frac{\lvert v \rvert}{2}(\frac{1}{\lvert x-a\rvert }+\frac{1}{\lvert x-b\rvert}).$$

Now we prove  that the wighted arithmetic  Funk metrics are  Finslerian.

Let $\Omega$ be as before an open convex set in $\mathbb{R}^n$. 
\cref{main-theorem} gives the following corollary.
\begin{corollary}\label{cor:straight-lines}
The metric $\mathcal{F}_{t}^a$ defined in \cref{eqn1} is a Finsler metric with Lagrangian $p_t^a$, where $p$ is the Lagrangian of the Funk metric and 
$$p_t^a(x,v)=(1-t)p(x,v)+ t p(x,-v).$$
Furthermore straight line segments in $\Omega$ are minimal paths of $\mathcal{F}_{t,a}$.
\end{corollary}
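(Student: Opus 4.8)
The plan is to obtain this as a direct application of \cref{main-theorem} to the Funk Lagrangian $F=p$. The first step is to record the translation of notation: since $d(p)=\mathcal{F}$ (recalled above), we have $d(p)_t^a(x,y)=(1-t)\mathcal{F}(x,y)+t\mathcal{F}(y,x)=\mathcal{F}_t^a(x,y)$, and $p_t^a$ in the sense of \cref{eq:A} is exactly $(1-t)p(x,v)+tp(x,-v)$. Hence the corollary amounts to verifying the hypothesis of \cref{main-theorem}: every pair of points of $\mathring{\Omega}$ should be joined by an absolutely continuous bi-geodesic for $p$.

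The second step is to check this hypothesis, taking the bi-geodesic to be the Euclidean segment $\gamma(s)=(1-s)x+sy$, which is $C^\infty$ and in particular absolutely continuous. I would compute $l_p(\gamma)$ using the elementary formula $p(z,w)=|w|/|z-a|$ for $a$ the boundary point of $\Omega$ on the ray from $z$ in direction $w$: since all $\gamma(s)$ are collinear with $x$ and $y$ and $\dot\gamma$ is a constant vector, this $a$ is the single point $a^+$ for every $s$, and setting $r(s)=|\gamma(s)-a^+|$ the integral telescopes,
\[
l_p(\gamma)=\int_0^1\frac{|\dot\gamma(s)|}{r(s)}\,ds=\int_0^1\frac{-\dot r(s)}{r(s)}\,ds=\log\frac{|x-a^+|}{|y-a^+|}=\mathcal{F}(x,y)
\]
(with the degenerate case $R(x,y)\subset\Omega$, where $p\equiv 0$ on $\gamma$, giving $l_p(\gamma)=0=\mathcal{F}(x,y)$). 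By \cref{two lengths} this equals $l(d(p))(\gamma)=d(p)(x,y)$, so $\gamma$ is a geodesic for $p$; running the same computation on the reversed segment $\gamma^{-1}$, now with $a^-$, shows $l_p(\gamma^{-1})=\mathcal{F}(y,x)=d(p)(y,x)$, so $\gamma$ is an absolutely continuous bi-geodesic joining $x$ to $y$.

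With the hypothesis in hand, \cref{main-theorem} immediately gives, for every $t\in[0,1]$, that $\mathcal{F}_t^a=d(p)_t^a=\delta(l_{p_t^a})$ is Finsler with Lagrangian $p_t^a=(1-t)p(x,v)+tp(x,-v)$; and its final clause, that absolutely continuous bi-geodesics for $p$ remain bi-geodesics for $p_t^a$, applied to the Euclidean segments above, yields the assertion that straight line segments are minimal paths for $\mathcal{F}_t^a$. The only step requiring genuine work is the length computation in the second step showing that Euclidean segments are bi-geodesics for the Funk metric; this is classical — it is the telescoping integral indicated above, the same one underlying the fact $d(p)=\mathcal{F}$ — so I expect no real obstacle.
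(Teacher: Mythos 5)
Your proposal is correct and follows essentially the same route as the paper, which likewise deduces the corollary as a direct application of \cref{main-theorem} with $F=p$. The only difference is that you explicitly verify the hypothesis that Euclidean segments are absolutely continuous bi-geodesics for the Funk Lagrangian via the telescoping integral, a classical fact the paper leaves implicit (citing that $d(p)=\mathcal{F}$); this added detail is accurate and harmless.
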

\begin{proof}
It follows from  Theorem \ref{main-theorem} that $\mathcal{F}_{t}^a$ is a Finsler metric with Lagrangian $p_t^a$ and straight line segments are minimal paths.
\end{proof}

To conclude this section, we discuss two concrete examples.
The (Lagrangian of) the Funk metric in the unit ball \( B^n \subset \mathbb{R}^n \) is given by \cite[page 76]{Troyanov2014}
\[
p(x,v) = \frac{\langle x,v \rangle + \sqrt{ (1-\|x\|^2)\|v\|^2+\langle x,v \rangle^2}}{1-\|x\|^2}.
\]
 We therefore have
\[
p_t^a(x,v) = \frac{(1-2t)\langle x,v \rangle + \sqrt{ (1-\|x\|^2)\|v\|^2+\langle x,v \rangle^2}}{1-\|x\|^2},
\]

\bigskip

For the upper half-space $\mathcal{H} = \{x\in \mathbb{R}^n \mid x_n > 0\}$, we have
\[
p(x,v) = \max \left(\frac{v_n}{x_n}, 0\right), 
\]

We therefore have
\[
p_t^a(x,v) = \begin{cases} t\dfrac{v_n}{x_n}, & \text{if } v_n>0 \\  (1-t)\dfrac{|v_n|}{x_n}, & \text{if } v_n<0, \end{cases}
.\]

 	\section{Finsler metrics on Teichm\"uller spaces} \label{s:Teich}

In this section, we show how the ideas we discussed in this section fit in the study of metrics on Teichm\"uller spaces.

The first class of examplesspaces that we consider in this section are  Randers metrics  on Teichm\"uller.
%
%

A Randers metric is originally associated with an $n$-dimensional Riemannian manifold $(M,g)$ and a differential  1-form $\omega$ on $M$ satisfying $\|\omega\|_g<1$ at every point of $M$. It is defined infinitesimally as the Finsler metric associated with the differential form   $F(v)=g(v,v)^{1/2}+\omega(v)$. Randers metrics have applications in physics. In their original form, they were introduced in \cite{Randers}. They were later studied by various authors, especially 
in the case when the initial metric on $M$ is Finsler and not Riemannian,   

In the paper \cite{MOP-2022}, we introduced on the Teichm\"uller space of the torus a natural family of Randers metrics connecting the Teichm\"uller metric to the Thurston asymmetric metric of the Teichm\"uller space (adapted to the case of the torus).  
  We gave a description of the unit tangent circle of each of these metrics.

Motivated by this case of the torus, we studied in the  
 paper \cite{MOP-2024} a family of (asymmetric) Randers metrics which are deformations of the Teichm\"uller metric on the Teichm\"uller space $ \mathcal{T}_{g,n}$ of a hyperbolic orientable surface $S_{g,n}$ of genus $g$ with $n$ punctures (possibly with $n=0$),  where the given differential 1-form is the differential of the logarithm of the extremal length function associated with a measured foliation.
We showed that this metric, which we call the \emph{Teichm\"uller--Randers metric}, is not complete on any Teichm\"uller disc, and we gave a characterisation of geodesic rays with bounded length in this disc in terms of their directing measured foliations.

 We consider now a family of metrics on spaces of Euclidean triangles and triangulated surfaces.

Let   $\frak{T}_1$ be the set of Euclidean triangles of unit area.
We recall the definition of an asymmetric metric  $\eta$  on $\frak{T}_1$
we introduced in the paper \cite{SOP}. 

Each triangle is marked by a choice of a homeomorphism between this triangle and a fixed disc with three marked points on the boundary, the homeomorphism sending the marked points to the vertices of the triangle.
Let $a_1, a_2, a_3>0$ denote the lengths of the  edges of a marked triangle. We 
 parametrise the set of marked Euclidean triangles by the following subset of $\R^3$:
$$\{(a_1,a_2,a_3) : a_1,a_2,a_3>0, \ a_2+a_3-a_1>0, a_1+a_3-a_2>0, a_1+a_2-a_3>0\}$$

This set is identified with the product space $(\R^*_+)^3=\{(A_1,A_2,A_3): A_1, A_2, A_3 >0\}$ via the mapping

$$A_1=\frac{a_2+a_3-a_1}{2}$$
$$A_2=\frac{a_3+a_1-a_2}{2}$$
$$A_3=\frac{a_1+a_2-a_3}{2}.$$

The area of a triangle $(a_1,a_2,a_3)$ in terms of the parameters $(A_1,A_2,A_3)$ is given by Heron's formula:
\begin{equation}\label{f:Heron}
\mathrm{Area}(a_1,a_2,a_3)=\mathrm{Ar}(A_1,A_2,A_3)=\sqrt{(A_1+A_2+A_3)A_1A_2A_3}.
\end{equation}

Now consider the function $$\eta: (\R^*_+)^3\times (\R^*_+)^3\to \R$$ defined by

$$\eta((A_1,A_2,A_3),(A_1',A_2',A_3'))=\log \max \{A_1'/A_1 ,A'_2/A_2, A_3'/A_3\}.$$ 

There is a natural action of $\R^*_+$ on $(\R^*_+)^3$ by $\lambda(A_1,A_2,A_3)=(\lambda A_1,\lambda A_2, \lambda A_3)$. This action corresponds to scaling a triangle by the factor $\lambda$. 

For $X,Y \in (\R^*_+)^3$ and for $\lambda,\lambda'\in \mathbb{R}$, we have

$$\exp(\eta(\lambda X,\lambda'Y))=\frac{\lambda'}{\lambda}\exp(\eta(X,Y)).$$

Finally,  the  subspace $\frak{T}_1\subset (\R^*_+)^3$ consisting of unit area triangles is the set:

$$\frak{T}_1=\{(A,B,C): \mathrm{Ar}(A,B,C)=1\}.$$

This function $\eta$ defines an asymmetric metric on $\frak{T}_1$. 	In the paper \cite{SOP}, we studied  this metric. In particular, we proved that it is Finsler and we gave a characterization of its geodesics. Then we studied the same questions for  a metric on a space of surfaces equipped with singular flat metrics relative to a fixed triangulation.  We examined its geodesics, its Finsler structure and its completeness. We also addressed questions on deformations and one-parameter families of such a metric.

One such deformation is the following. Consider the family of metrics on $\frak{T}_1$ parametrised by $t\in [0,1]$ and defined for each such $t$ by 
$$\eta^a_t(X,Y)=(1-t)\eta(X,Y)+t\eta(Y,X),$$
where $X,Y \in \frak{T}_1$. As usual, the superscript $a$ in $\eta^a_t$ stands for ``arithmetic". We ask the following questions on these metrics, which are not answered in the paper \cite{SOP}.
   
\begin{enumerate}

 \item Are all these metrics Finsler ?

\item  Are all these metrics non-symmetric except for $t=1/2$?

\item Are any two metrics $\eta^a_t$ and $\eta^a_{t'}$ are non-isometric for $t\not=t'$ ?

\end{enumerate}
One may ask similar questions for the metrics on $\frak{T}_1$ defined by 

$$\eta^m_t(X,Y)=\max\{(1-t)\eta(X,Y),t\eta(Y,X)\}.$$
where, as before,  the superscript $m$ in $\eta^m_t$ stands for $\max$.

\medskip
Finally, we make a relation with the paper \cite{PTh}. It was proved in that paper that  Thurston's asymmetric metric satisfies Busemann's axiom, and this fact was used to show that Thurston's metric is both forward and backward complete and also geodesically complete in both directions.
Our \cref{Finsler Busemann} gives an alternative proof of this in a much more general setting.
We can also apply our result to the earthquake metric studied in \cite{HOPP}.
\cref{Finsler Busemann} implies the equivalence of the forward and backward topologies, which was proved in Appendix A of \cite{HOPP}.

\noindent
\.{I}smail Sa\u{g}lam\, Adana Alparslan Turkes Science and Technology University,\\ Department of Aerospace Engineering, 
Adana,T\"{u}rkiye

\noindent e-mail: isaglamtrfr@gmail.com

\medskip

\noindent Ken'ichi Ohshika,
Department of Mathematics,
Gakushuin University,
Mejiro, Toshima-ku, 171-8588 Tokyo, Japan

\noindent email: 
ohshika@math.gakushuin.ac.jp

\medskip

\noindent Athanase Papadopoulos, 
 Institut de Recherche Math\'ematique Avanc\'ee, 
 CNRS et Universit\'e de Strasbourg, 
7 rue Ren\'e Descartes, 67084, Strasbourg, France

 \noindent  email: 
papadop@math.unistra.fr\\


\begin{thebibliography}{99}


\bibitem{AFV} S. Altinkaya, M. Fujimura and M. Vuorinen, Hilbert metric and quasiconformal mappings,  arXiv:2502.18109, 2025.





\bibitem{BCS}
D. Bao, S.~S. Chern and Z. Shen, {\it An introduction to Riemann-Finsler geometry}, Graduate Texts in Mathematics, 200, Springer, New York, 2000.
  

 \bibitem{Busemann1950-1}  H. Busemann,
The geometry of Finsler spaces. 
Bull. Am. Math. Soc. 56, 5-16 (1950).


\bibitem{Busemann-GG} H. Busemann,  The geometry of geodesics, Academic Press, London (1955), reprint, Dover, 2005.

\bibitem{Busemann-Synthetic}
 H. Busemann, Recent synthetic differential geometry. Ergeb. Math. Grenzgeb. 54
(1970).
  



\bibitem{BM} H. Busemann and W. Mayer, On the foundations of calculus of variations, Trans. Amer. Math. Soc. {\bf 49} (1941), 173--198
\bibitem{Burt} A.~Y. Burtscher, Length structures on manifolds with continuous Riemannian metrics, New York J. Math. {\bf 21} (2015), 273--296.
\bibitem{CZ} J. Collins and J. Zimmer, An asymmetric Arzel\`a--Ascoli theorem, Topology Appl. {\bf 154} (2007), no.~11, 2312--2322.

\bibitem{Funk} P. Funk, \"Uber geometrien, bei denen die geraden die k\"urzesten sind. Math. Ann. 101, 226-237 (1929).

\bibitem{Harpe} P. de la Harpe, On Hilbert’s metric for simplices. In:  Geometric group theory (G. A. Niblo et al., eds.), Vol. 1, London Math. Soc. Lecture Note Ser. 181, Cambridge University Press, Cambridge 1993, 97-119.


\bibitem{Hausdorff2} F. Hausdorff, Mengenlehre, Berlin, de Greuter, 1927. English translation, Set Theory. AMS Chelsea Publishing, 1957.
 
 \bibitem{Hilbert} D. Hilbert, Mathematische Probleme, Goettinger Nachrichten, 1900, pp. 253-
297, reprinted in Archiv der Mathematik und Physik, 3d. ser., vol. 1 (1901) p.
44-63 and 213-237. English version, Mathematical problems, translated by
M. Winston Newson, Bulletin of the AMS, vol. 8, 1902, pp. 437-445 and 478-479. Reprinted also in the
Bull. Amer. Math. Soc. (N.S.) 37 (2000), no. 4, 407-436. 

 
 \bibitem{HOP} Y. Huang,  K.  Ohshika  and A. Papadopoulos, The infinitesimal and global
Thurston geometry of Teichm\"uller space. arXiv:2111.13381 (2021), to appear in the Journal of Differential Geometry.

 \bibitem{HOPP} Y. Huang, K. Ohshika, H. Pan and A. Papadopoulos, The earthquake metric on Teichm\"{u}ller space, preprint, 2024.


  \bibitem{MOP}  H. Miyachi,  K. Ohshika and A. Papadopoulos,  On the Teichm\"uller space of acute triangles,   Monatsh. Math. 205, No. 3, 649-666 (2024). 
  
  
\bibitem{MOP-2022} Miyachi, H., Ohshika, K., and Papadopoulos, A. Tangent spaces of the Teichm\"uller space of the torus with Thurston’s weak metric. Ann. Fenn. Math. 47, 1
(2022), p.  325-334.


\bibitem{MOP-2024} Miyachi, H., Ohshika, K., and Papadopoulos, A. The Teichm\"uller--Randers metric, to appear in the Annales de l'Institut Fourier, 2025.

\bibitem{OP1} K. Ohshika and A. Papadopoulos, Bijections of geodesic lamination space preserving left Hausdorff convergence, Monatshefte f\"ur Mathematik, 189 (2019), Issue 3, p. 507--521.

 
 \bibitem{Papa-Hilbert} A. Papadopoulos, Hilbert’s fourth problem, In: Handbook of Hilbert geometry  (A. Papadopoulos and M. Troyanov, ed.), European Mathematical Society, 2014,
p. 391-431.

\bibitem{PTF} A. Papadopoulos and M. Troyanov, From Funk to Hilbert geometry. Handbook of Hilbert
geometry, 33-67, IRMA Lect. Math. Theor. Phys., 22, Eur. Math. Soc., Zürich, 2014.

\bibitem{PY} A. Papadopoulos and S. Yamada, The Funk and Hilbert geometries for spaces of constant curvature, Monatsh. Math 172, p. 97-120,  (2013).



\bibitem{PY2} A. Papadopoulos and S. Yamada, Timelike Hilbert and Funk geometries, Diﬀerential Geometry and its Applications, 67 (2019) Article ID 101554, 42 p.


\bibitem{PY3} A. Papadopoulos and S. Yamada,
Timelike Hilbert geometry of the spherical simplex, In: Essays in geometry. Dedicated to Norbert A’Campo, ed. A. Papadopoulos, Berlin: European Mathematical Society, p. 847-870 (2023). 

\bibitem{PTh} A. Papadopoulos and G. Th\'eret, On the topology defined by Thurston's asymmetric metric, Math. Proc. Cambridge Philos. Soc. {\bf 142} (2007), no.~3, 487--496; MR2329697

\bibitem{PT22} A. Papadopoulos and M. Troyanov,  Weak Finsler structures and the Funk weak metric. Math. Proc. Camb. Philos. Soc. 147, No. 2, 419-437 (2009). 


\bibitem{PT2} A. Papadopoulos and M. Troyanov, 
Harmonic symmetrization of convex sets and of Finsler structures, with applications to Hilbert geometry. Expo. Math. 27, No. 2, 109-124 (2009). 


\bibitem{2012-Hilbert} A. Papadopoulos and M. Troyanov, ed. Handbook of Hilbert geometry. European Mathematical Society, IRMA Lectures in Mathematics and Theoretical Physics No. 22. 452 p., 2014.

 \bibitem{RV}  O. Rainio and M. Vuorinen, Hilbert metric in the unit ball, Studia Sci. Math. Hungar.
60(2-3), 2023, 175-191.



  \bibitem{Randers} G. Randers, On an asymmetrical metric in the four-space of general relativity. Phys.
Rev. (2) 59 (1941), 195-199.


  \bibitem{SP1}  \.{I}. Sa\u{g}lam, and A. Papadopoulos, Thurston's asymmetric metric on the space of singular flat metrics with a fixed quadrangulation,  L'Ens. Math., 2024.
     
  \bibitem{SP2}  \.{I}. Sa\u{g}lam, and A. Papadopoulos, Minimal stretch maps between Euclidean triangles, Osaka Math. Journal, to appear.
  
\bibitem{SOP} \.{I}.  Sa\u{g}lam, K. Ohshika  and A. Papadopoulos,
On the space of Euclidean triangles and triangulated Euclidean surfaces, preprint, 2025.

  

\bibitem{Thurston} W. P. Thurston, Minimal stretch maps between hyperbolic surfaces. arXiv.org
math.GT (01 1998). Published in the Collected Works of William P. Thurston, Vol.
I, American Mathematical Society, Providence, RI, 2022, p. 533-585.

\bibitem{Troyanov2014} M. Troyanov,
Funk and Hilbert geometries from the Finslerian viewpoint, In: Handbook of Hilbert geometry  (A. Papadopoulos and M. Troyanov, ed.), European Mathematical Society, 2014, p. 69-110.

  
\bibitem{ZZ} F. Zhang \& W. Zhao,  Absolutely continuous curves in Finsler-like spaces. Differ. Geom. Appl. 96, 102154 (2024).
  
    
\end{thebibliography}
\end{document}